\theoremstyle{plain}
\newtheorem{theorem}{Theorem}
\newtheorem{corollary}[theorem]{Corollary}
\newtheorem{lemma}[theorem]{Lemma}
\newtheorem{facts}[theorem]{Facts}
\theoremstyle{remark}
\newtheorem{remark}[theorem]{Remark}
\newtheorem{remarks}[theorem]{Remarks}
\theoremstyle{definition}
\newtheorem{example}[theorem]{Example}
\newtheorem{examples}[theorem]{Examples}
\newtheorem{defn}[theorem]{Definition}
\newtheorem{defns}[theorem]{Definitions}
\newcommand{\Q}{{\mathbb{Q}}}
\newcommand{\N}{{\mathbb{N}}}
\newcommand{\R}{{\mathbb{R}}}
\newcommand{\Z}{{\mathbb{Z}}}
\newcommand{\cC}{{\mathcal{C}}}
\newcommand{\cD}{{\mathcal{D}}}
\newcommand{\cF}{{\mathcal{F}}}
\newcommand{\cI}{{\mathcal{I}}}
\newcommand{\IM}{{\mathcal{IM}}}
\newcommand{\cJ}{{\mathcal{J}}}
\newcommand{\cM}{{\mathcal{M}}}
\newcommand{\cN}{{\mathcal{N}}}
\newcommand{\cO}{{\mathcal{O}}}
\newcommand{\cP}{{\mathcal{P}}}
\newcommand{\cR}{{\mathcal{R}}}
\newcommand{\cW}{{\mathcal{W}}}
\newcommand{\cX}{{\mathcal{X}}}
\newcommand{\bal}{{\boldsymbol\alpha}}
\newcommand{\bbeta}{{\boldsymbol\beta}}
\newcommand{\bgamma}{{\boldsymbol\gamma}}
\newcommand{\ba}{{\mathbf a}}
\newcommand{\bA}{{\mathbf A}}
\newcommand{\bC}{{\mathbf C}}
\newcommand{\be}{{\mathbf e}}
\newcommand{\bm}{{\mathbf m}}
\newcommand{\bn}{{\mathbf n}}
\newcommand{\bN}{{\mathbf N}}
\newcommand{\bp}{{\mathbf p}}
\newcommand{\br}{{\mathbf r}}
\newcommand{\bv}{{\mathbf v}}
\newcommand{\CI}{\operatorname{CI}}
\newcommand{\DF}{\operatorname{DF}}
\newcommand{\df}{\operatorname{df}}
\newcommand{\E}{\operatorname{E}}
\newcommand{\EP}{\operatorname{EP}}
\newcommand{\FE}{\E'}
\newcommand{\In}{\operatorname{In}}
\newcommand{\Rat}{\operatorname{Rat}}
\newcommand{\Reg}{\operatorname{Reg}}
\newcommand{\rhe}{\operatorname{rhe}}
\newcommand{\Ve}{\operatorname{Vert}}
\newcommand{\Word}[2]{#1\left\llbracket#2\right\rrbracket}
\newcommand{\closure}[1]{\operatorname{Cl}\left(#1\right)}
\newcommand{\LL}[2]{L_{#1}^{(#2)}}
\newcommand{\VV}[2]{\bal_{#1}^{(#2)}}
\begin{document}

\title{On digit frequencies in $\beta$-expansions}
\date{June 2014}
\author{Philip Boyland}
\address{Department of
    Mathematics\\ University of Florida\\ 372 Little Hall\\ Gainesville\\
    FL 32611-8105, USA}
\email{boyland@ufl.edu}
\author{Andr\'e de Carvalho}
\address{Departamento de
    Matem\'atica Aplicada\\ IME-USP\\ Rua Do Mat\~ao 1010\\ Cidade
    Universit\'aria\\ 05508-090 S\~ao Paulo SP\\ Brazil}
\email{andre@ime.usp.br}
\author{Toby Hall}
\address{Department of Mathematical Sciences\\ University of
    Liverpool\\ Liverpool L69 7ZL, UK}
\email{tobyhall@liv.ac.uk}

\thanks{ The authors are grateful for the support of FAPESP grants
  2010/09667-0 and \mbox{2011/17581-0}. This research has also been supported
  in part by EU Marie-Curie IRSES Brazilian-European partnership in
  Dynamical Systems (FP7-PEOPLE-2012-IRSES 318999 BREUDS)}

\subjclass[2010]{Primary 11A63, %Radix representation; digital problems
Secondary 37B10, %Symbolic Dynamics 
68R15% (Combinatorics on words)
}

\begin{abstract}
We study the sets $\DF(\beta)$ of digit frequencies of
$\beta$-expansions of numbers in~$[0,1]$. We show that $\DF(\beta)$ is
a compact convex set with countably many extreme points which varies
continuously with~$\beta$; that there is a full measure collection of
non-trivial closed intervals on each of which $\DF(\beta)$ mode locks
to a constant polytope with rational vertices; and that the generic
digit frequency set has infinitely many extreme points, accumulating
on a single non-rational extreme point whose components are rationally
independent. 
\end{abstract}  

\maketitle

%---------------------------------------------------------------

\section{Introduction}
\label{sec:intro}
\subsection{$\beta$-expansions}

Let~$\beta>1$ be a real number and write~$k = \lceil \beta\rceil$, the
smallest integer which is not less than~$\beta$.
A {\em $\beta$-expansion}~\cite{Renyi} of a number $x\in[0,1]$ is any
representation of~$x$ of the form
\[x=\sum_{r=0}^\infty w_r\beta^{-(r+1)},\]
in which the sequence~$w=(w_r)_{r\ge 0}$ of digits belongs to~$\Sigma_k =
\{0,1,\ldots,k-1\}^{\N}$. 

In general, a number~$x$ can have many distinct
$\beta$-expansions~\cite{Erdos,Sidorov}. However there is a canonical
choice, known as the {\em greedy $\beta$-expansion}, or simply as {\em
  the} \mbox{$\beta$-expansion}, for which the sequence $w\in\Sigma_k$ of
digits is lexicographically greatest. By analogy with the usual
algorithm for determining expansions to integer bases, it is found by
choosing each digit in turn to be as large as possible. To be precise,
except in the trivial case where~$\beta$ is an integer and~$x=1$,
if $f_\beta\colon[0,1]\to[0,1]$ is defined by $f_\beta(x) = \beta
x\bmod 1$, then the sequence $d_\beta(x)\in\Sigma_k$ of digits of the
greedy $\beta$-expansion of~$x$ is given by $d_\beta(x)_r = \lfloor
\beta f_\beta^r(x)\rfloor$, the integer part of
$\beta f_\beta^r(x)$. Equivalently, $d_\beta(x)$ is the itinerary of~$x$
under $f_\beta$ with respect to the intervals $I_j$ ($0\le j\le k-1$)
defined by $I_j = [j/\beta, (j+1)/\beta)$ for $0\le j < k-1$, and
  \mbox{$I_{k-1} = [(k-1)/\beta, 1]$}: that is, $d_\beta(x)_r = j$ if
  and only if $f_\beta^r(x)\in I_j$.

\subsection{The digit frequency set}
Let \[\Delta = \left\{\bal\in\R_{\ge0}^k\,:\,\sum_{i=0}^{k-1}\alpha_i = 1\right\}\] be
the standard $(k-1)$-simplex. Given~$\bal\in\Delta$, we say that a
number~$x\in[0,1]$ {\em has $\beta$-digit frequency~$\bal$}, and write
$\delta_\beta(x) = \bal$, if $\lim_{r\to\infty} N_{i,r}(d_\beta(x))/r =
\alpha_i$ for each~$i$, where $N_{i,r}(d_\beta(x))$ denotes the number
of $i^\text{s}$ in the first~$r$ entries
of~$d_\beta(x)$. Parry~\cite{Parry} observed that since the digit
frequency can be written as a Birkhoff sum, it exists and is constant
for almost every $x\in[0,1]$ with respect to any ergodic invariant
measure, such as the measure of maximal entropy which he himself
defined.

In this paper we study the sets
\[\DF(\beta) = \{\delta_\beta(x)\,:\,x \in[0,1] 
\text{ has well-defined $\beta$-digit frequency}\}\]
of {\em all} digit frequencies of $\beta$-expansions of numbers
in~$[0,1]$. For example, Figure~\ref{fig:short}
depicts~$\DF(\beta)$ for $\beta = 2.1901$ (and indeed for all~$\beta$
in a neighbourhood of this value), projected into the
$(\alpha_0,\alpha_2)$-plane, and shown within the
$2$-simplex~$\Delta$. It is a pentagon, with vertices~$(0,1,0)$,
$(1,0,0)$, $(3/4,0,1/4)$, $(5/8,1/8,2/8)$, and $(4/9,3/9,2/9)$ (see
Examples~\ref{ex:ep}a), \ref{ex:explicit-beta}, and
\ref{ex:calc-bn-w}). So, for example, a $\beta$-expansion with this
$\beta$ can have at most $1/4$ of its digits equal to~2; and if it has
this many $2^\text{s}$, then at most $1/8$ of its digits can be equal to~1.

\begin{figure}[htbp]
\begin{center}
\includegraphics[width=0.4\textwidth]{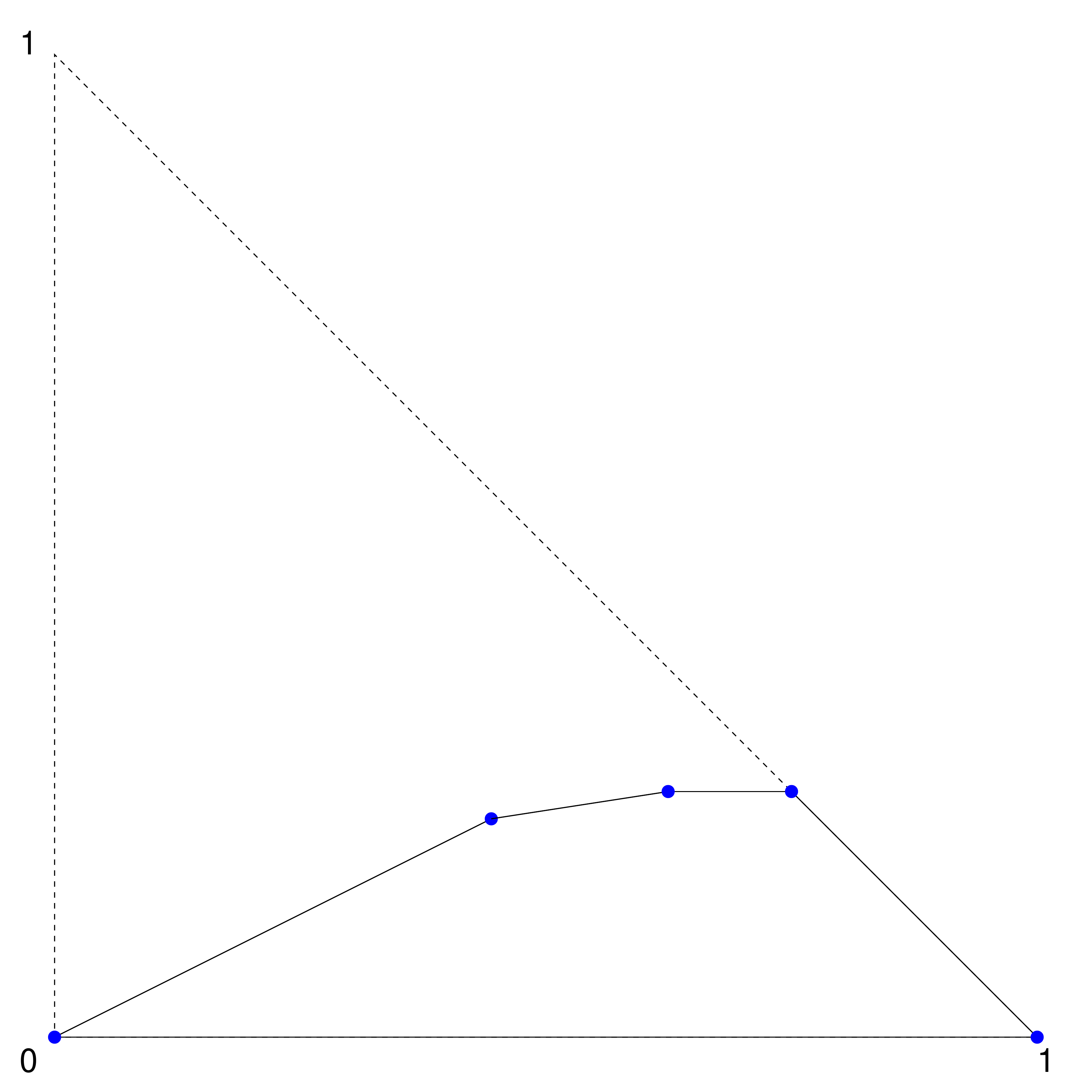}
\caption{$\DF(\beta)$ for~$\beta$ in a neighbourhood of $2.1901$
  (so~$k=3$), projected into the $(\alpha_0,\alpha_2)$-plane}
\label{fig:short}
\end{center}
\end{figure}

\subsection{Bifurcations of the digit frequency set}
When $1<\beta< 2$ (i.e., when $k=2$), the digit frequency set is a
compact interval contained in the line $\alpha_0+\alpha_1=1$,
\[\DF(\beta) = \{(1-\alpha_1, \alpha_1)\,:\,\alpha_1\in
     [0,\rhe(\beta)]\}.\] As $\beta$ increases through $(1,2)$, the
     right hand endpoint $\rhe(\beta)$ varies as a devil's staircase
     (Figure~\ref{fig:k=2}). That is, for each rational $m/n\in(0,1)$
     there is a non-trivial interval $I_{m/n}\subset (1,2)$ with
     $\rhe(\beta) = m/n$ for all $\beta\in I_{m/n}$
     (``mode-locking''). On the other hand, each irrational in~$(0,1)$
     is equal to $\rhe(\beta)$ for only one~$\beta$; such
     ``irrational'' values of~$\beta$ are buried points of a Cantor
     set whose complementary gaps are the interiors of the
     mode-locking intervals. Bifurcations in the digit frequency set
     take place at those values of~$\beta$ for which the itinerary
     of~$1$ under~$f_\beta$ is a Sturmian sequence $s_\alpha$ for some
     $\alpha\in(0,1)$: this is the smallest value of $\beta$ for which
     the frequency $(1-\alpha, \alpha)$ belongs to $\DF(\beta)$. This
     situation is well understood, and can be derived from the theory
     of rotation intervals of bimodal degree one circle maps.

When $\beta\in(k-1,k)$ for $k\ge 3$ the situation, while somewhat
analogous, is more complicated. Now $\DF(\beta)$ behaves as a convex
set-valued devil's staircase, mode-locking to a polytope with rational
vertices on each of an infinite collection of closed subintervals of
$(k-1,k)$. ``Irrational'' behaviour again occurs when $\beta$ is a
buried point of the Cantor set whose complementary gaps are the
interiors of the mode-locking intervals. For such values of~$\beta$,
the digit frequency set $\DF(\beta)$ has a countably infinite set of
rational vertices, which limits on a finite number of non-rational
extreme points, of which there are at least~$1$ and at most~$k-1$ (see
Figures~\ref{fig:cubes} and~\ref{fig:squares}). $\beta$ is said to be
{\em regular} when there is only~$1$ non-rational extreme point (or when
there are none, i.e. when $\DF(\beta)$ is a polytope); and is said to
be {\em exceptional} otherwise.

As in the case~$k=2$, bifurcations of the digit frequency set occur
when the itinerary of~$1$ under~$f_\beta$ passes through elements of a
particular set of sequences: these are the {\em lexicographic infimax
  sequences} of~\cite{lex}, which are described in
Section~\ref{sec:infimax}. In the regular case the infimax sequence
has a well-defined digit frequency, which becomes an element of the
digit frequency set. In the exceptional case, on the other hand, the
convex hull of the set of non-rational extreme points is a ``prime''
collection of frequencies which is either contained in or disjoint
from $\DF(\beta)$ for every~$\beta$ (see Remark~\ref{rmk:order}).

The topologically generic behaviour, with respect to the
parameter~$\beta$, is therefore that $\DF(\beta)$ is a polytope with
rational vertices. %(union of interiors of mode-locking intervals is
                   %open and dense)
We shall also show
(Theorem~\ref{thm:polytope-typical}) that $\DF(\beta)$ is a polytope
with rational vertices for Lebesgue almost every~$\beta$ in
$(k-1,k)$. This contrasts, however, with the generic
behaviour in the collection of all digit frequency sets with the
Hausdorff topology. This space is homeomorphic to an interval, and we
show (Theorem~\ref{thm:typical-regular}) that in this interval,
the generic digit frequency set is of non-rational regular
type, with its single limiting extreme point having components that are
independent over the rationals.

\subsection{Summary of results}
The following list summarises the main results of the paper, which are
contained in Theorems~\ref{thm:beta-exp-props},
\ref{thm:polytope-typical}, and~\ref{thm:typical-regular}. The
statements are for $\beta\in(k-1,k)$, where $k\ge 3$: the simpler
situation when $\beta\in(1,2)$ is discussed in
Example~\ref{ex:k=2}. The same statements hold for $\beta\in[k-1,k]$
(except that $\DF(\beta)$ has dimension $k-2$ if $\beta=k-1$),
but it avoids technical issues to restrict to non-integer
values of $\beta$.
\begin{itemize}
\item $\DF(\beta)$ is a compact convex set of dimension~$k-1$.
\item The function $\beta\mapsto\DF(\beta)$ is increasing, and is
  continuous with respect to the Hausdorff topology on the set of
  non-empty compact subsets of~$\Delta$. 
\item $\DF(\beta)$ has countably many extreme points, and there is an
  algorithm which lists them.
\item All but at most $k-1$ extreme points of~$\DF(\beta)$ are
  rational. There exist~$\beta$ for which the set of extreme points
  accumulates on $k-1$ non-rational points.
\item There are infinitely many disjoint closed intervals, whose union
  has full Lebesgue measure in~$(k-1,k)$, on each of which
  $\DF(\beta)$ mode locks to a constant polytope with rational
  vertices.
\item Digit frequency sets which are not polytopes are realised by
  only one value of~$\beta$.
\item The set $\cD := \{\DF(\beta)\,:\,\beta\in(k-1,k)\}$  with
  the Hausdorff topology is homeomorphic to an interval. There is a
  dense $G_\delta$ subset of $\cD$ consisting of digit frequency sets
  having a single non-rational extreme point, whose components are
  rationally independent.
\end{itemize}

\subsection{Outline of the paper}
Let~$Z_\beta=\{d_\beta(x)\,:\,x\in[0,1]\}$ be the set of all digit
sequences of greedy $\beta$-expansions, or equivalently the set of all
itineraries of orbits of $f_\beta$. The set~$Z_\beta$ is determined by
the ``kneading sequence'' of $f_\beta$, the itinerary of the rightmost
point~$1$, once a minor correction has been made to account for the
ambiguity of coding at the endpoints of the intervals $I_j$ -- this is
the same ambiguity which arises in expansions to integer
bases. To describe this correction, define $w_\beta\in\Sigma_k$ by
\[
w_\beta = 
\begin{cases}
\overline{d_1\ldots d_{r-1}(d_r-1)} & \text{ if }d_\beta(1) =
d_1\ldots d_{r-1}d_r\,\overline{0} \text{ for some~$r\ge1$ with }d_r>0,\\
d_\beta(1) & \text{ otherwise,}
\end{cases}
\]
where the overbar denotes infinite repetition. Then (see for example~\cite{ACW}
Theorem~7.2.9)
\[Z_\beta = \{v\in\Sigma_k\,:\,\sigma^r(v) < w_\beta \text{ for all }
  r\in\N\} \cup \{d_\beta(1)\},\] 
where~$<$ is the lexicographic order on~$\Sigma_k$.  

The problem of determining $\DF(\beta)$ can therefore be
rephrased as follows: for which $\bal\in\Delta$ is there some
$v\in\Sigma_k$ with digit frequency~$\bal$, whose entire
$\sigma$-orbit is less than~$w_\beta$? Techniques for answering this
type of question were developed in~\cite{lex}. The results of that
paper will be summarised and extended in Section~\ref{sec:infimax},
and applied in Section~\ref{sec:beta-shift} to the closely related
problem of describing digit frequency sets of {\em symbolic
  $\beta$-shifts}
\[X(w) = \{v\in\Sigma_k\,:\, \sigma^r(v)\le w \text{ for all
}r\in\N\},\] where $w\in\Sigma_k$. In Section~\ref{sec:beta-expansion}
we interpret these results in terms of digit frequency sets of
$\beta$-expansions; describe typical digit frequency sets from both
the measure-theoretic and topological points of view; investigate the
smoothness of digit frequency sets at non-rational extreme points;
discuss the set of accumulation points of the sequences
$(N_{i,r}(d_\beta(1)))_{r\ge 0}$; and present some examples
illustrating how the digit frequency set~$\DF(\beta)$ can be
calculated (or approximated in the non-polytope case) in practice for
a specific value of~$\beta$.

\subsection{The subsequential approach}
Digit frequencies could alternatively be defined subsequentially. Let
\[\delta_\beta'(x) = \{\bal\in\Delta\,:\, \lim_{s\to\infty} N_{i,
  r_s}(d_\beta(x))/r_s = \alpha_i \text{ for some $r_s\to\infty$ and
  each~$i$}\}\] for each $x\in[0,1]$, and set \[\DF'(\beta) =
\bigcup_{x\in[0,1]} \delta_\beta'(x).\] A priori
$\DF'(\beta)$ is a bigger set than $\DF(\beta)$, but it turns out
(Remark~\ref{rmk:subsequential}) that the two are equal for
all $\beta>1$.

\subsection{An example}
\label{sec:markov-example}
When $d_\beta(1)$ is preperiodic, a bare hands calculation of
$\DF(\beta)$ can be carried out using Markov partition techniques
(compare~\cite{Fried,Ziemian}). For example, Figure~\ref{fig:beta-ex}
shows a Markov partition~$(J_1,J_2,J_3,J_4,J_5)$ for $f_\beta$ when
$d_\beta(1) = 2\,1\,2\,1\,\overline{0}$, so that
\mbox{$\beta^4-2\beta^3-\beta^2-2\beta-1 = 0$} ($\beta\simeq 2.7$). We
see the digit $0$ every time we visit $J_1$, the digit $1$ every time
we visit $J_2$ or $J_3$, and the digit $2$ every time we visit $J_4$
or~$J_5$.

The associated Markov transition graph has~10 {\em minimal loops}
(loops which visit each interval at most once). These are
$\overline{1}$, $\overline{1\,2}$, $\overline{1\,2\,4}$,
$\overline{1\,3\,5\,2}$, $\overline{1\,3\,5\,2\,4}$,
$\overline{1\,4}$, $\overline{1\,5\,2}$, $\overline{1\,5\,2\,4}$,
$\overline{2}$, and $\overline{2\,3\,5}$, with corresponding
digit frequencies $(1,0,0)$, $(1/2,1/2,0)$, $(1/3,1/3,1/3)$,
$(1/4,1/2,1/4)$, $(1/5,2/5,2/5)$, $(1/2,0,1/2)$, $(1/3,1/3,1/3)$,
$(1/4,1/4,1/2)$, $(0,1,0)$, and $(0,2/3,1/3)$ respectively.  The digit
frequency set is obtained by taking the convex hull of these
frequencies: it is a pentagon with vertices $(1,0,0)$, $(1/2,0,1/2)$,
$(1/4,1/4,1/2)$, $(0,1,0)$, and $(0,2/3,1/3)$. See
Example~\ref{ex:markov-again}.

This observation by itself is sufficient, using Theorem~3.4
of~\cite{Ziemian}, to establish that $\DF(\beta)$ is a convex polytope
with rational vertices whenever the $f_\beta$-orbit of~$1$ is finite.
(A straightforward concatenation argument shows that, in the case
where $w_\beta\not=d_\beta(1)$, the ``missing'' digit frequency of
$w_\beta$ is realised as $d_\beta(x)$ for some $x$: this digit
frequency corresponds to a loop~$L$ in the Markov graph, which can be
concatenated with any other intersecting loop~$M$ in the pattern
$LMLLMLLLM\ldots$ to provide the itinerary of a suitable point~$x$.)
However, quite different techniques are needed to address the general
case.

This calculation, and several of the results presented in this
paper, are reminiscent of rotation sets of torus homeomorphisms. The
connection between the two problems will be made explicit in the
authors' forthcoming paper {\em ``New rotation sets in a family of
  torus homeomorphisms''}.

\begin{figure}[htbp]
\begin{center}
\includegraphics[width=0.4\textwidth]{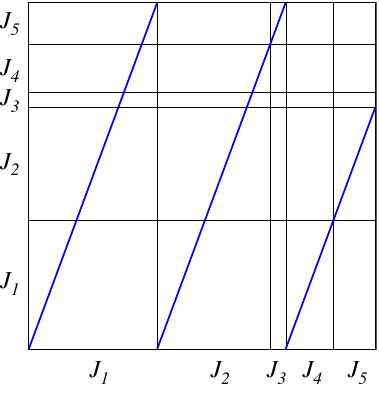}
\caption{A Markov partition for~$f_\beta$ when $d_\beta(1) = 2\,1\,2\,1\,\overline{0}$}
\label{fig:beta-ex}
\end{center}
\end{figure}

We note also the connection of this work with the {\em digit
  distribution problem} first studied by Besicovitch~\cite{Bes} and
Eggleston~\cite{Eggleston}. For expansions to {\em integer} bases~$N$
and elements $\bal$ of the $(N-1)$-simplex~$\Delta$, they considered the
properties (for example, the Hausdorff dimension) of the
sets \[H_{\bal} = \{x\in [0,1]\,:\,\delta_N(x) = \bal\}.\] The results
presented here enable one to determine, for arbitrary bases~$\beta$,
the values of $\bal$ for which $H_{\bal}$ is non-empty.

%---------------------------------------------------------------

\section{Infimax sequences}
\label{sec:infimax}

\subsection{Notation and summary of results from \protect\cite{lex}}
Let~$k\ge 2$ be an integer, and \mbox{$\Sigma_k =
\{0,1,\ldots,k-1\}^\N$}, the set of sequences $w = (w_r)_{r\ge 0}$ with
entries in $\{0,1,\ldots,k-1\}$, endowed with the lexicographic order
and the product topology (we consider the natural numbers~$\N$ to
include~$0$). The suffix $k$ will generally be suppressed, both
on~$\Sigma_k$ and on other $k$-dependent objects. We refer to
elements of the alphabet $\{0,1,\ldots,k-1\}$ as {\em digits}, since
elements of~$\Sigma$ will be interpreted as digit sequences of
$\beta$-expansions. 

Denote by $\cW$ the set of non-trivial finite words $W$ over the
alphabet $\{0,1,\ldots,k-1\}$, ordered lexicographically with the
convention that any proper initial subword of $W$ is greater
than~$W$. We write $|W|\ge 1$ for the length of a word~$W$.

If $V,W\in\cW$, we write $VW$ for the concatenation of $V$ and $W$;
$W^n$ for the $n$-fold repetition of $W$ (where $n\ge 1$);
$\overline{W} = WWWW\ldots$ for the element of~$\Sigma$ given by
infinite repetition of~$W$; and $V\overline{W}$ for the element
$VWWWW\ldots$ of $\Sigma$. An element of~$\Sigma$ of the form
$\overline{W}$ is said to be {\em periodic}.

If $w\in\Sigma$ and $r\ge 1$, we write $\Word{w}{r} = w_0w_1\ldots
w_{r-1}$ for the word formed by the first~$r$ digits of $w$.

The shift map $\sigma\colon\Sigma\to\Sigma$ is defined by $\sigma(w)_r
= w_{r+1}$. An element $w$ of~$\Sigma$ is said to be {\em maximal} if
it is the maximum element of its $\sigma$-orbit: that is, if
$\sigma^r(w)\le w$ for all $r\ge 0$. We write $\cM$ for the set of
maximal elements of~$\Sigma$.

As in the introduction, let~$\Delta$ denote the standard
$(k-1)$-simplex
\[\Delta =
\left\{\bal\in\R^k_{\ge0}\,:\,\sum_{i=0}^{k-1}\alpha_i=1\right\}\]
with the Euclidean metric~$d$. (In~\cite{lex}, $\Delta$ was endowed
with the maximum metric to ease some of the calculations, but this is
not necessary here.) $B_\epsilon(\bal)$ denotes the open
$\epsilon$-ball about $\bal\in\Delta$. Write $\cC(\Delta)$ for the
space of non-empty compact subsets of~$\Delta$, with the Hausdorff
metric~$d_H$.

Given $\bal\in\Delta$, let $\cR(\bal)\subset\Sigma$ be the set of
sequences with digit frequency~$\bal$, 
\[\cR(\bal) = \{w\in\Sigma\,:\,\lim_{r\to\infty} N_{i,r}(w)/r =
\alpha_i \text{ for each }i\},\]
and $\cM(\bal) = \cM\cap\cR(\bal)$, the set of maximal sequences with
digit frequency~$\bal$.

We begin with a brief summary of necessary results from~\cite{lex}. We
work here over the alphabet $\{0,1,\ldots,k-1\}$, as is appropriate
for digit sequences of $\beta$-expansions, rather than over the
alphabet $\{1,2,\ldots,k\}$ used in~\cite{lex}. A second notational
change is that we write
\[\Delta' = \{\bal\in\Delta\,:\,\alpha_{k-1} \not=0\}\] 
for the set of elements of the standard simplex whose final coordinate
is non-zero: in~\cite{lex} this set, which was the main object of
study, was denoted~$\Delta$, and the set here called~$\Delta$ was
denoted~$\overline{\Delta}$.

Let~$\Delta_n \subset\Delta$ be defined for~$n\ge 0$ by
\[\Delta_n = \left\{\bal\in\Delta'\,:\, \lfloor
\alpha_0/\alpha_{k-1}\rfloor = n\right\},\]
so that the $\Delta_n$ partition $\Delta'$. Define $J\colon
\Delta'\to\N$ by $J(\bal) = \lfloor \alpha_0/\alpha_{k-1}\rfloor$, so
that $\bal\in \Delta_{J(\bal)}$ for each $\bal\in\Delta'$.

We define a multi-dimensional continued fraction map $K\colon
\Delta'\to\Delta'$ by setting \mbox{$K(\bal) = K_{J(\bal)}(\bal)$},
where~$K_n\colon \Delta_n\to\Delta'$ is the projective homeomorphism
given by
\[
K_n(\bal) = \left(\frac{\alpha_1}{1-\alpha_0},\,\,
\frac{\alpha_2}{1-\alpha_0},\,\,
\ldots,\,\,\frac{\alpha_{k-2}}{1-\alpha_0},\,\, \frac{\alpha_0 -
  n\alpha_{k-1}}{1-\alpha_0},\,\,
\frac{(n+1)\alpha_{k-1}-\alpha_0}{1-\alpha_0} \right).
\]
Let~$\Phi\colon\Delta'\to\N^\N$ be the itinerary map of~$K$ with
respect to the partition~$\{\Delta_n\}$. That is, for
each~$\bal\in\Delta'$, the sequence $\Phi(\bal)\in\N^\N$ is defined by
\[\Phi(\bal)_r = J(K^r(\bal)) \qquad (r\in\N).\]
We order $\N^\N$ {\em reverse lexicographically}: if $\bm$ and
$\bn$ are distinct elements of $\N^\N$, then $\bm < \bn$ if and only
if $m_r > n_r$, where~$r$ is the smallest index with $m_r\not=n_r$.

For each~$n\in\N$, let~$\Lambda_n\colon\Sigma\to\Sigma$ and
$\Lambda_n\colon\cW\to\cW$ be the
substitutions defined by
\begin{equation}
\label{eq:lambda}
\Lambda_n\colon \qquad
\left\{
\begin{array}{lll}
  i & \mapsto & (i+1) \qquad\qquad \text{ if }0\le i\le k-3\\
  (k-2) & \mapsto & (k-1)\, 0^{n+1} \\
  (k-1) & \mapsto & (k-1)\, 0^n. 
\end{array}
\right.
\end{equation}
These substitutions are strictly order preserving and satisfy
$\Lambda_n(\cM)\subset\cM$. 

Given $\bn\in\N^\N$, define substitutions $\Lambda_{\bn,r}$ for
each~$r\in\N$ by
\[\Lambda_{\bn,r} = \Lambda_{n_0}\circ\Lambda_{n_1}\circ\cdots
\circ\Lambda_{n_r},\]
and let $S\colon\N^\N\to\cM\subset\Sigma$ be given by
\[S(\bn) = \lim_{r\to\infty}\Lambda_{\bn,r}(\overline{k-1}),\]
the limit existing since $\Lambda_{n_{r+1}}(k-1)$ begins with the
digit~$k-1$, so that $\Lambda_{\bn,r}(k-1)$ is an initial subword of
$\Lambda_{\bn,r+1}(k-1)$ for all~$r$. Finally, let $\cI =
S\circ\Phi\colon \Delta'\to\cM$.

The following results from~\cite{lex} will be used here:
\begin{facts}\mbox{}
\label{facts:infifacts}
\begin{enumerate}[a)]
\item Let $\bal\in\Delta'$. Then $\cI(\bal)$ is the infimum of
  $\cM(\bal)$, the so-called {\em $\bal$-infimax sequence}.
\item Let $\bal\in\Delta'$ and $w\in\cR(\bal)$. Then $\cI(\bal) \le
  \sup_{r\ge 0}\sigma^r(w)$.
\item The itinerary $\Phi(\bal)$ of $\bal$ is of the form
  $n_0\,n_1\,\ldots\, n_r\,\overline{0}$ if and only if $\bal\in\Q^k$.
In this case $\cI(\bal) = \overline{\Lambda_{\bn,r}(k-1)}$, and the
repeating block $B_\bal=\Lambda_{\bn,r}(k-1)$ cannot be written in the
form $B_\bal = W^n$ with $W\in\cW$ and $n>1$.
\item $\Phi\colon\Delta'\to\N^\N$ is lower semi-continuous and
  surjective. It is not injective except when $k=2$: the preimage
  $\Phi^{-1}(\bn)$ of a point $\bn\in\N^\N$ is a $d$-simplex, where
  $0\le d = d(\bn)\le k-2$.
\item $S\colon \N^\N\to\cM$ is a continuous order-preserving
  bijection onto its image~$\IM\subset\cM$, the set of infimax
  sequences.
\item $\cI(\bal)\in\cM(\bal)$ (and hence $\cI(\bal) = \min \cM(\bal)$)
  if and only if $\Phi^{-1}(\Phi(\bal)) = \{\bal\}$. This is always
  the case when $\bal\in\Q^k$. If $\Phi^{-1}(\Phi(\bal))\not=\{\bal\}$
  then $\cI(\bal)$ does not have well-defined digit frequency.
\item If $\cR'(\bal)$ denotes the set of sequences with subsequential
  digit frequency $\bal$, i.e.
\[\cR'(\bal) = \{w\in\Sigma\,:\,\lim_{s\to\infty}N_{i,r_s}(w)/r_s = \alpha_i \text{ for
  some }r_s\to\infty\text{ and each }i\},\]
and $\cM'(\bal) = \cM\cap\cR'(\bal)$, then a) and b) hold in the
primed versions: that is, $\cI(\bal) = \inf\cM'(\bal)$, and $\cI(\bal) \le
\sup_{r\ge 0}\sigma^r(w)$ for all $w\in\cR'(\bal)$.
\item The set of itineraries $\bn$ for which $\Phi^{-1}(\bn)$ is a
  single point contains the dense $G_\delta$ subset~$\cO$ of $\N^\N$
  consisting of those sequences which contain infinitely many distinct
  subwords $1^{2k-3}$. (For $k=3$ this is a result of Bruin and
  Troubetzkoy~\cite{BT}.) 
\item An element $\bal$ of $\Delta'$ has the property that its orbit
  $(K^r(\bal))_{r\ge 0}$ is disjoint from the faces of $\Delta$ if and
  only if its itinerary $\bn=\Phi(\bal)$ has the following property:
  for every $r\ge 0$ there is some $s\ge 0$ with $n_{r+s(k-1)}\not=0$.
\end{enumerate}
\end{facts}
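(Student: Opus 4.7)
My approach is to pass to projective coordinates in which $K$ becomes transparently shift-like so that zero entries propagate in a controlled way, and to exploit a telescoping identity relating the zeroth coordinate at times separated by $k-1$. Writing $\bal^{(r)} = K^r(\bal)$, I set $\gamma_i^{(r)} = \alpha_i^{(r)}/\alpha_{k-1}^{(r)}$ for $0 \le i \le k-2$, $\epsilon_r = \{\gamma_0^{(r)}\}$ (fractional part), and $y_r = 1/(1-\epsilon_r) \ge 1$. A direct computation from the formula for $K_n$ (together with $n_r = \lfloor \gamma_0^{(r)} \rfloor$) yields the update rules
\[
\gamma_i^{(r+1)} = y_r\,\gamma_{i+1}^{(r)} \quad (0 \le i \le k-3), \qquad \gamma_{k-2}^{(r+1)} = y_r - 1,
\]
and iterating these for $k-1$ steps produces the key identity
\[
\gamma_0^{(r+k-1)} \;=\; (y_r - 1)\prod_{i=1}^{k-2} y_{r+i}.
\]

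For the reverse direction (orbit meets a face $\Rightarrow$ condition fails), the update rules show that zero entries travel on a cyclic conveyor belt of period $k-1$: a zero at position $i \ge 1$ moves to position $i-1$ in one step, and a zero at position $0$ (which forces $\epsilon = 0$ and $y = 1$) moves to position $k-2$. So if $\alpha_j^{(r_1)} = 0$ for some $r_1 \ge 0$ and $j \in \{0,\ldots,k-2\}$, then $\alpha_0$ vanishes at step $r_1 + j$ and at every subsequent step $r_1 + j + s(k-1)$, forcing $n = 0$ at each such step and violating the stated condition with $r = r_1 + j$.

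For the forward direction I would argue by contradiction, supposing the orbit lies in the interior of $\Delta$ yet some $r_0$ satisfies $n_{r_0+s(k-1)} = 0$ for all $s \ge 0$. Interior-ness together with $\gamma_{k-2}^{(r+1)} = y_r - 1$ gives $y_r > 1$ strictly for every $r$. Substituting $n = 0$ (so $\gamma_0 = \epsilon$) into the key identity and using $y-1 = \epsilon y$ yields
\[
\epsilon_{r_0+(s+1)(k-1)} \;=\; \epsilon_{r_0+s(k-1)}\prod_{i=0}^{k-2} y_{r_0+s(k-1)+i}.
\]
The subsequence $\epsilon_{r_0+s(k-1)}$ is therefore strictly increasing but bounded above by $1$ (since $n=0$ forces $\epsilon<1$), hence converges to some $L \ge \epsilon_{r_0} > 0$; its consecutive ratios tend to $1$, and because each of the $k-1$ factors in the product is $>1$, each factor must tend to $1$. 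In particular $\epsilon_{r_0+s(k-1)} \to 0$, contradicting $L > 0$. The main obstacle is isolating the right projective coordinates and verifying the key identity cleanly; once these are in hand, both directions reduce to short arguments---propagation of zeros on one side, and a monotone-convergence contradiction on the other.
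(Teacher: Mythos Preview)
The paper does not prove these facts; they are quoted from~\cite{lex} without argument, so there is no proof in the paper to compare against. Your proposal addresses only item~i), and for that item your argument is correct.

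The projective coordinates $\gamma_i^{(r)}=\alpha_i^{(r)}/\alpha_{k-1}^{(r)}$ indeed obey the recursions $\gamma_i^{(r+1)}=y_r\,\gamma_{i+1}^{(r)}$ for $0\le i\le k-3$ and $\gamma_{k-2}^{(r+1)}=y_r-1$, and unwinding these gives the identity $\gamma_0^{(r+k-1)}=(y_r-1)\prod_{i=1}^{k-2}y_{r+i}$. The reverse direction is immediate from the cyclic propagation of zeros that these recursions encode: a zero in position~$j$ reaches position~$0$ after~$j$ steps and then recurs there with period~$k-1$, forcing $n=0$ along that arithmetic progression. For the forward direction, the interior hypothesis gives $\gamma_{k-2}^{(r+1)}>0$ for every~$r$, hence $y_r>1$ and $\epsilon_r>0$ for every~$r$; along the progression $r_0+s(k-1)$ the values $\epsilon_{r_0+s(k-1)}$ are then strictly increasing in $(0,1)$ with consecutive ratios $\prod_{i=0}^{k-2}y_{r_0+s(k-1)+i}\to 1$, which (each factor exceeding~$1$) forces every factor, in particular $y_{r_0+s(k-1)}$, to tend to~$1$, and hence $\epsilon_{r_0+s(k-1)}\to 0$, contradicting convergence to a positive limit. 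This is a clean self-contained proof of~i); items a)--h) remain unaddressed.
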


\begin{defns}[Rational type, regular, exceptional]
Let $\bal\in\Delta'$ with itinerary $\bn=\Phi(\bal)$. We say that
$\bal$ and $\bn$ are of {\em rational type} if $\bal\in\Q^k$: that is,
if $\bn = n_0\,n_1\,\ldots\, n_r\,\overline{0}$ for some
$n_0,n_1,\ldots,n_r$. We say that $\bal$ and $\bn$
are {\em regular} if $\Phi^{-1}(\bn) = \{\bal\}$, and that they are
{\em exceptional} otherwise. In the latter case, we refer to the
non-trivial simplex $\Phi^{-1}(\bn)\subset\Delta'$ as an {\em
  exceptional set}. 
\end{defns}

It can be shown~\cite{lex} that, for $k\ge 3$, an element $\bn$ of $\N^\N$
is regular when it grows slowly enough and has only finitely many
zero entries; and that it is exceptional when it grows
rapidly enough. 

\begin{example}
Let $k=3$ and $\bal = (7/16, 5/16, 4/16) \in \Delta'$.  We have
\[(7/16,5/16,4/16) \,\,\stackrel{K_1}{\longrightarrow}\,\, (5/9,3/9,1/9)
\,\,\stackrel{K_5}{\longrightarrow}\,\, (3/4,0,1/4)
\,\,\stackrel{K_3}{\longrightarrow}\,\, (0,0,1),\]
and $K_0(0,0,1) = (0,0,1)$. Therefore $\bal$ has itinerary $\Phi(\bal)
= 1\,5\,3\,\overline{0}$. The $\bal$-infimax sequence is
\[\cI(\bal) = S(1\,5\,3\,\overline{0}) =
\Lambda_1(\Lambda_5(\Lambda_3(\overline{2}))) = \overline{
2\,0\,1\,1\,1\,1\,1\,2\,0\,0\,2\,0\,0\,2\,0\,0}.\] 
This is the smallest maximal sequence with digit frequency~$\bal$.
\end{example}

\subsection{Convergence to the exceptional set}
\label{sec:converge}
In this section we establish information about exceptional sets which
goes beyond that contained in~\cite{lex}. The results are technical,
and their proofs could be omitted on first reading.

We fix an element $\bn$ of $\N^\N$, and begin by describing, as
in~\cite{lex}, a decreasing sequence $(A_{\bn,r})_{r\ge 0}$ of
simplices whose intersection is $\Phi^{-1}(\bn)$.

The homeomorphism $K_n\colon \Delta_n\to\Delta'$ has
inverse given by
\begin{equation}
\label{eq:KnI}
K_n^{-1}(\bal) = \left(\frac{(n+1)\,\alpha_{k-2} + n\,\alpha_{k-1}}{D},
\frac{\alpha_0}{D}, \frac{\alpha_1}{D}, \ldots,\frac{\alpha_{k-3}}{D},
\frac{\alpha_{k-2}+\alpha_{k-1}}{D} 
\right),
\end{equation}
where $D= (n+1)\,\alpha_{k-2} + n\,\alpha_{k-1}+ 1$. The
homeomorphism $K_n^{-1}\colon \Delta' \to \Delta_n$ extends by the
same formula to a homeomorphism
$K_n^{-1}\colon\Delta\to\closure{\Delta_n} \subset\Delta$. 

\begin{defns}[$\Upsilon_{\bn,r}$,\, $A_{\bn,r}$,\, $\cF$,\, $\cF_{\bn,r}$]
For each
$r\in\N$, we define an embedding
\[
\Upsilon_{\bn,r} = K_{n_0}^{-1}\circ K_{n_1}^{-1} \circ \cdots \circ
K_{n_r}^{-1} \colon \Delta\to\Delta.
\]

Let $\cF = \Delta\setminus\Delta'$ denote the face $\alpha_{k-1}=0$
of~$\Delta$, and write
\[A_{\bn,r} = \Upsilon_{\bn,r}(\Delta) \qquad\text{and}\qquad
\cF_{\bn,r} = \Upsilon_{\bn,r}(\cF).\]
\end{defns}
Since each $K_n^{-1}$ is projective, $A_{\bn,r}$ is a $(k-1)$-simplex
and $\cF_{\bn,r}$ is a $(k-2)$-simplex for all~$r$. Moreover, since
$K_{n_r}^{-1}(\Delta) \subset\Delta$, the sequence $(A_{\bn,r})_{r\ge
  0}$ is decreasing.

Now 
\[\Upsilon_{\bn,r}(\Delta') = A_{\bn,r} \setminus \cF_{\bn,r} =
\{\bal\in\Delta'\,:\,\Word{\Phi(\bal)}{r+1} = \Word{\bn}{r+1}\},\] the set of
frequencies whose itineraries agree with~$\bn$ on their first $r+1$
entries.  On the other hand, 
\begin{equation}
\label{eq:cF-itin}
\bal\in\cF_{\bn,r} \cap\Delta' \implies \Word{\Phi(\bal)}{r-i+1} = n_0n_1\ldots
n_{r-i-1}\,(n_{r-i}+1)\text{ for some $i$ with }0\le i\le k-2.
\end{equation}
%% \begin{multline}
%% \label{eq:cF-itin}
%% \bal\in\cF_{\bn,r} \cap\Delta' \implies \\\Word{\Phi(\bal)}{r-i+1} = n_0n_1\ldots
%% n_{r-i-1}\,(n_{r-i}+1)\text{ for some $i$ with }0\le i\le k-2.
%% \end{multline}
In particular, $\Phi(\bal) < \bn$ for all $\bal\in\cF_{\bn,r}$ when
$r\ge k-2$, since $\cF_{\bn,r} \subset\Delta'$ for $r\ge k-2$.
%Checked - the point is that k-1 iterates will always result in a
%positive last component. So until the last k-1 we must be in \Delta'
%and have an itinerary which agrees with n. The first time
%\alpha_{k-1} becomes positive it is equal to (n_i+1) times \alpha_0
%by the formula for K_{n_i}^{-1}.

Let~$\bal\in\Delta'$. If $\Phi(\bal)=\bn$ then $\bal\in A_{\bn,r}$ for
all~$r$. On the other hand, if $\Phi(\bal)\not=\bn$, let $r\in\N$ be
such that $\Phi(\bal)_r\not=n_r$: then $\bal\not\in\cF_{\bn,r+k}$
by~(\ref{eq:cF-itin}), and hence $\bal\not\in A_{\bn,r+k}$. We
therefore have
\begin{itemize}
\item every element of~$\Delta'$ whose itinerary starts
  $n_0\,\ldots\,n_r$ lies in $A_{\bn,r}$, and
\item if $r\ge k$, then every element of $A_{\bn,r}$ has itinerary
  starting $n_0\,\ldots\,n_{r-k}$.
\end{itemize}
In particular,
\[\Phi^{-1}(\bn) = \bigcap_{r\ge 0} A_{\bn,r},\]
and the decreasing sequence $(A_{\bn,r})$ converges Hausdorff to
$\Phi^{-1}(\bn)$.  Moreover, since $(A_{\bn,r})$ is a decreasing
sequence of simplices, it follows by a theorem of
Borovikov~\cite{Borovikov} that $\Phi^{-1}(\bn)$ is also a simplex. By
Facts~\ref{facts:infifacts}c), this simplex cannot have interior
in~$\Delta$ (since then there would be both rational and non-rational
points having itinerary~$\bn$), and so it has dimension at most~$k-2$.

The following lemma plays a key r\^ole in the proofs of
Lemma~\ref{lem:describe-lfs} and Theorem~\ref{thm:continuous}, two of
the central results of the paper.

\begin{lemma}
\label{lem:face-converge}
Let~$\bn\in\N^\N$. Then for every~$\epsilon>0$, there are infinitely
many~$r$ with $d_H(\cF_{\bn,r},\, \Phi^{-1}(\bn))<\epsilon$.
\end{lemma}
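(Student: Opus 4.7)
The plan is to bound the Hausdorff distance $d_H(\cF_{\bn,r+1}, A_{\bn,r+1})$ by a quantity that tends to zero along a subsequence, and then combine this with the already-established convergence $A_{\bn,r}\to\Phi^{-1}(\bn)$. Write $P_i^{(r)}=\Upsilon_{\bn,r}(e_i)$ for the $k$ vertices of $A_{\bn,r}$; since $\Upsilon_{\bn,r+1}=\Upsilon_{\bn,r}\circ K_{n_{r+1}}^{-1}$ and $K_n^{-1}(e_i)=e_{i+1}$ for $0\le i\le k-3$, we have $P_i^{(r+1)}=P_{i+1}^{(r)}$, so $\cF_{\bn,r+1}$ is the face of $A_{\bn,r+1}$ opposite $P_{k-1}^{(r+1)}$, and $d_H(\cF_{\bn,r+1},A_{\bn,r+1}) = d(P_{k-1}^{(r+1)},\cF_{\bn,r+1})$.

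Since $K_{n_{r+1}}^{-1}(e_{k-2})$ and $K_{n_{r+1}}^{-1}(e_{k-1})$ both lie on the edge $[e_0,e_{k-1}]$ of $\Delta$, the vertices $P_{k-2}^{(r+1)}$ and $P_{k-1}^{(r+1)}$ both lie on the segment $[P_0^{(r)},P_{k-1}^{(r)}]$ of $A_{\bn,r}$. Writing $\Upsilon_{\bn,r}$ in the form $\bal\mapsto M_r\bal/|M_r\bal|_1$ for an appropriate non-negative matrix $M_r$, and setting $c_j = |M_r e_j|_1>0$, a direct computation on this segment gives
\[
|P_{k-2}^{(r+1)} - P_{k-1}^{(r+1)}| = \frac{c_0\,c_{k-1}}{[(n_{r+1}+1)c_0 + c_{k-1}][n_{r+1}c_0 + c_{k-1}]}\,|P_0^{(r)}-P_{k-1}^{(r)}|.
\]
Since the fraction is at most $1/(n_{r+1}+1)$ (using $(n_{r+1}+1)c_0+c_{k-1}\ge(n_{r+1}+1)c_0$ and $n_{r+1}c_0+c_{k-1}\ge c_{k-1}$), and since $P_{k-2}^{(r+1)}\in\cF_{\bn,r+1}$,
\[
d_H(\cF_{\bn,r+1},A_{\bn,r+1}) \le \frac{\operatorname{diam}(A_{\bn,r})}{n_{r+1}+1}.
\]

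The triangle inequality combined with $d_H(A_{\bn,r},\Phi^{-1}(\bn))\to 0$ then yields $d_H(\cF_{\bn,r+1},\Phi^{-1}(\bn)) \le \operatorname{diam}(A_{\bn,r})/(n_{r+1}+1) + o(1)$ as $r\to\infty$. If $\bn$ is unbounded, choose a subsequence $r_s$ along which $n_{r_s+1}\to\infty$: the right-hand side tends to zero along $r_s$, giving the desired conclusion. If $\bn$ is bounded, the factor $1/(n_{r+1}+1)$ is bounded below, so one must instead show that $\operatorname{diam}(A_{\bn,r})\to 0$, i.e.\ that $\Phi^{-1}(\bn)$ reduces to a single point. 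I expect this last step to be the main obstacle; it can be handled by a projective-contraction argument (showing that sufficiently long products of the finitely many inverse branches $K_n^{-1}$ involved contract in the Hilbert metric on the relative interior of $\Delta$), or by invoking the characterisation of regular itineraries from \cite{lex}.
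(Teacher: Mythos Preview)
Your bound $d_H(\cF_{\bn,r+1},A_{\bn,r+1})\le\operatorname{diam}(A_{\bn,r})/(n_{r+1}+1)$ is correct, and the unbounded case follows cleanly from it. The gap is in the bounded case: you need every bounded $\bn\in\N^\N$ to be regular, and this is not available at this point in the paper. It is proved only for $k=3$, and only much later (Lemma~\ref{lem:bounded-regular}), by an argument which is itself delicate when $\bn$ contains many zeros. The sufficient condition from \cite{lex} recorded as Facts~\ref{facts:infifacts}h) (infinitely many blocks $1^{2k-3}$) does not cover all bounded sequences, and a direct Birkhoff/Hilbert-metric argument requires products of the inverse-branch matrices to be eventually strictly positive, which can fail for bounded $\bn$ with certain patterns of zeros --- this is precisely why the proof of Lemma~\ref{lem:bounded-regular} needs a case split. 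For general $k\ge 3$ you would be replacing the present lemma by something at least as hard, and you have acknowledged as much.

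The paper avoids the bounded/unbounded dichotomy altogether. After disposing of the regular case trivially, it argues the exceptional case by contradiction: assume that for all large $r$ some vertex $\bv_m$ of $\Phi^{-1}(\bn)$ lies within $\epsilon$ only of the non-face vertex $\bal_r^{(k-1)}$, and pick any such $r$ with $n_{r+1}\ne 0$ (available merely because exceptional $\bn$ are not of rational type). The crucial estimate is a \emph{ratio} rather than your absolute one: the distances of $\bal_{r+1}^{(k-2)}$ and $\bal_{r+1}^{(k-1)}$ from $\bal_r^{(k-1)}$ differ by a factor less than $2$ as soon as $n_{r+1}\ge 1$. Since $\bal_r^{(k-1)}$ and $\bal_{r+1}^{(k-1)}$ are both already within $\epsilon/5$ of $\bv_m$, this forces the face vertex $\bal_{r+1}^{(k-2)}$ into $B_\epsilon(\bv_m)$ as well, contradicting the hypothesis at step $r+1$. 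The very weak requirement $n_{r+1}\ge 1$ is what makes the argument uniform over all exceptional $\bn$, bounded or not.
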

\begin{proof} %Checked carefully 29-7-13
If $\bn$ is regular then $\Phi^{-1}(\bn)$ is a point and $\cF_{\bn,r}
\subset A_{\bn,r} \to \Phi^{-1}(\bn)$, so the result is immediate. We
can therefore assume that~$\bn$ is exceptional. In particular, $\bal$
is not of rational type, and hence $n_r\not=0$ for infinitely many~$r$.

Let $\bv_1,\ldots,
\bv_{n}$ ($2\le n\le k-1$) be the vertices of the simplex
$\Phi^{-1}(\bn)$. Write $\be_0,\ldots \be_{k-1}$ for the vertices
of~$\Delta$ (so that the $i^\text{th}$ component of $\be_i$ is $1$),
and let $\bal_r^{(i)} = \Upsilon_{\bn,r}(\be_i)$. Therefore the
vertices of $\cF_{\bn,r}$ are $\bal_r^{(i)}$ for $0\le i\le k-2$, and
$A_{\bn,r}$ has the additional vertex $\bal_r^{(k-1)}$. Notice, by
comparison of~(\ref{eq:lambda}) and~(\ref{eq:KnI}), that
$\bal_r^{(i)}$ is the digit frequency of the word
$\Lambda_{\bn,r}(i)$. We write $L_r^{(i)}$ for the length of
$\Lambda_{\bn,r}(i)$, so that $L_r^{(i)}\bal_r^{(i)}$ is an integer
vector whose entries give the number of each digit in
$\Lambda_{\bn,r}(i)$. 

Since $A_{\bn,r}\to\Phi^{-1}(\bn)$ as $r\to\infty$ we have
$\cF_{\bn,r} \subset A_{\bn,r} \subset
B_\epsilon(\Phi^{-1}(\bn))$ for all sufficiently large~$r$. It remains
to prove that $\Phi^{-1}(\bn) \subset
B_\epsilon(\cF_{\bn,r})$ for infinitely many~$r$, and for
this it is
enough to show that, for infinitely many~$r$, every
vertex of~$\Phi^{-1}(\bn)$ is approximated by a vertex of
$\cF_{\bn,r}$: that is,
\[
\forall\epsilon>0,\,\,\,\forall R,\,\,\,\exists r\ge R,\,\,\, \forall m\le n,\,\,\,
\exists i\le k-2,\,\,\, \bal_r^{(i)} \in B_\epsilon(\bv_m).
\]

Suppose for a contradiction that there exist $\epsilon>0$ and $R$ such
that for all $r\ge R$ there is some~$m$ for which $B_\epsilon(\bv_m)$
doesn't contain $\bal_r^{(i)}$ for any $i\le k-2$. Decrease~$\epsilon$
if necessary so that the distance between any two vertices of
$\Phi^{-1}(\bn)$ is at least $2\epsilon$; and increase~$R$ if
necessary so that $d_H(A_{\bn,r},\Phi^{-1}(\bn))<\epsilon/5$ for all
$r\ge R$. In particular this means that, for all $r\ge R$, every
$B_{\epsilon/5}(\bv_\ell)$ contains some vertex~$\bal_r^{(i)}$ of
$A_{\bn,r}$.

Pick $r\ge R$ with $n_{r+1}\not=0$. Since there is some~$m$ for which
none of the $\bal_r^{(i)}$ with $i\le k-2$ lie in $B_\epsilon(\bv_m)$,
we must have $\bal_r^{(k-1)}\in B_{\epsilon/5}(\bv_m)$.  

Now~(\ref{eq:lambda}) gives $\bal_{r+1}^{(i)} = \bal_r^{(i+1)}$ for $0\le i\le k-3$, and 
\begin{eqnarray}
\label{eq:alpha-eqs-1}
\bal_{r+1}^{(k-2)} &=& \frac{(n_{r+1}+1)L_r^{(0)}\bal_r^{(0)} +
  L_r^{(k-1)}\bal_r^{(k-1)}}{(n_{r+1}+1)L_r^{(0)} + L_r^{(k-1)}},
\qquad \text{and}\\
\label{eq:alpha-eqs-2}
\bal_{r+1}^{(k-1)} &=& \frac{n_{r+1}L_r^{(0)}\bal_r^{(0)} +
  L_r^{(k-1)}\bal_r^{(k-1)}}{n_{r+1}L_r^{(0)}+L_r^{(k-1)}}.
\end{eqnarray}
Since the only vertices of~$A_{\bn,r+1}$ which are not also
vertices of~$A_{\bn,r}$ are $\bal_{r+1}^{(k-2)}$ and
$\bal_{r+1}^{(k-1)}$, one of these must lie in
$B_{\epsilon/5}(\bv_m)$. However, both lie along the line segment
  joining $\bal_r^{(0)}$ to $\bal_r^{(k-1)}$, and $\bal_{r+1}^{(k-1)}$
  is the closer of the two to $\bal_r^{(k-1)}$. Therefore
  $\bal_{r+1}^{(k-1)}\in B_{\epsilon/5}(\bv_m)$. 
%Because if \bal_{r+1}^{(k-2)} is in the ball, so is the line segment
%joining this point to \bal_r^{(k-1)}, and hence so is
%\bal_{r+1}^{(k-1)}

Let~$d_1<d_2$ be the distances from $\bal_r^{(k-1)}$ to
$\bal_{r+1}^{(k-1)}$ and $\bal_{r+1}^{(k-2)}$ respectively. Then
\[
\frac{d_2}{d_1} = \frac{n_{r+1}+1}{n_{r+1}} \, \frac{n_{r+1}L_r^{(0)}
  + L_r^{(k-1)}} {(n_{r+1}+1)L_r^{(0)} + L_r^{(k-1)}} < 2
\]
since $n_{r+1}\ge 1$ by choice of $r$, and so $(n_{r+1}+1)/n_{r+1} \le
2$. However, since both $\bal_r^{(k-1)}$ and $\bal_{r+1}^{(k-1)}$ lie
in $B_{\epsilon/5}(\bv_m)$ we have $d_1 < 2\epsilon/5$, and hence $d_2
< 4\epsilon/5$. Therefore
\[
d(\bal_{r+1}^{(k-2)}, \bv_m) \le d_2 + d(\bal_r^{(k-1)}, \bv_m) <
\frac{4\epsilon}{5} + \frac{\epsilon}{5} = \epsilon.
\]
This is the required contradiction. For since both
$\bal_{r+1}^{(k-2)}$ and $\bal_{r+1}^{(k-1)}$ are within~$\epsilon$ of
$\bv_m$, every other vertex~$\bv_\ell$ of $\Phi^{-1}(\bn)$ must have
$d(\bv_\ell,\bal_{r+1}^{(i)}) < \epsilon/5$ for some $i<k-2$.
\end{proof}

As a consequence, the itinerary map $\Phi$ has no local minima:

\begin{corollary}
\label{cor:ball-less}
Let~$\bal\in\Delta'$. Then for all~$\epsilon>0$ there is some
$\bbeta\in B_\epsilon(\bal)$ with $\Phi(\bbeta) < \Phi(\bal)$.
\end{corollary}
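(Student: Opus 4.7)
The plan is to exploit equation~(\ref{eq:cF-itin}), which describes the itineraries of points on the face $\cF_{\bn,r}$, where I write $\bn = \Phi(\bal)$. That equation says that for every $\bbeta \in \cF_{\bn,r} \cap \Delta'$ there is some $i$ with $0\le i\le k-2$ such that $\Phi(\bbeta)$ agrees with $\bn$ in positions $0,\ldots,r-i-1$ and has $\Phi(\bbeta)_{r-i}=n_{r-i}+1 > n_{r-i}$. Under the reverse lexicographic order on $\N^\N$, this is precisely the condition $\Phi(\bbeta) < \bn = \Phi(\bal)$. So to prove the corollary it is enough to find, for each $\epsilon > 0$, some $\bbeta \in \cF_{\bn,r} \cap \Delta'$ with $d(\bbeta,\bal) < \epsilon$ for some choice of~$r$.

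This is where Lemma~\ref{lem:face-converge} is used: it supplies infinitely many~$r$ with $d_H(\cF_{\bn,r},\Phi^{-1}(\bn)) < \epsilon$. I would fix any such $r \ge k-2$, which is allowed since the lemma produces infinitely many suitable~$r$, and for which $\cF_{\bn,r} \subset \Delta'$ (as noted in the paragraph preceding the lemma, since $\cF_{\bn,r}\subset\Delta'$ for $r\ge k-2$). Because $\bal \in \Phi^{-1}(\bn)$, the Hausdorff bound immediately gives some $\bbeta \in \cF_{\bn,r}$ with $d(\bbeta,\bal) < \epsilon$, and by the choice of~$r$ this $\bbeta$ lies in~$\Delta'$. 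By the previous paragraph, $\Phi(\bbeta) < \Phi(\bal)$, which is what was required.

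There is really no obstacle here beyond correctly bookkeeping the reverse lexicographic convention and the proviso that $\cF_{\bn,r}$ be contained in the domain~$\Delta'$ of~$\Phi$; all the substantive work has been concentrated in Lemma~\ref{lem:face-converge}, which guarantees that the faces $\cF_{\bn,r}$ actually accumulate on the whole of $\Phi^{-1}(\bn)$ rather than just sitting inside the shrinking simplices $A_{\bn,r}$ in some degenerate way.
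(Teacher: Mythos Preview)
Your proof is correct and follows essentially the same approach as the paper's own proof: pick $r\ge k-2$ with $d_H(\cF_{\bn,r},\Phi^{-1}(\bn))<\epsilon$ using Lemma~\ref{lem:face-converge}, take $\bbeta\in\cF_{\bn,r}$ close to~$\bal$, and invoke~(\ref{eq:cF-itin}). Your version is slightly more explicit about the reverse lexicographic convention and the inclusion $\cF_{\bn,r}\subset\Delta'$, but the argument is the same.
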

\begin{proof}
Write $\bn = \Phi(\bal)$. Let~$r \ge k-2$ be such that
$d_H(\cF_{\bn,r}, \Phi^{-1}(\bn)) < \epsilon$, so that there is a
point~$\bbeta$ of $\cF_{\bn,r}$ within distance~$\epsilon$ of
$\bal\in\Phi^{-1}(\bn)$. Then $\Phi(\bbeta) < \bn = \Phi(\bal)$
by~(\ref{eq:cF-itin}).
\end{proof}

\subsection{Concatenations of repeating blocks}
We will need the following straightforward result about
concatentations of repeating blocks of rational infimaxes.

\begin{lemma}
\label{lem:concatenate}
Let $\bal\in\Delta'$, and $(\bal_i)$ be a sequence of
rational elements of $\Delta'$ with the property that
$\cI(\bal_i)<\cI(\bal)$ for all $i$. Write $B_i$ for the repeating
block of $\cI(\bal_i)$. Then the sequence
\[
w = B_0\,B_1\,B_2\,\ldots \in\Sigma
\]
satisfies $\sigma^r(w)\le\cI(\bal)$ for all $r\ge 0$.
\end{lemma}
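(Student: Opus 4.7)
I would fix $r \ge 0$, set $u = \cI(\bal)$, and suppose that position $r$ of $w$ falls at offset $p$ inside block $B_i$, so that $\sigma^r(w)$ begins with the suffix of $B_i$ starting at position $p$ and then continues with $B_{i+1}B_{i+2}\cdots$. The key comparison object is the periodic sequence $v^\ast := \sigma^p(\overline{B_i})$: it shares its first $|B_i|-p$ symbols with $\sigma^r(w)$, and because $\overline{B_i}$ is maximal while by hypothesis $\overline{B_i} < u$, we have $v^\ast \le \overline{B_i} < u$. Let $j_0$ be the first position at which $v^\ast_{j_0} < u_{j_0}$.

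If $j_0 < |B_i|-p$ the strict inequality lies within the common initial segment and is inherited by $\sigma^r(w)$, giving $\sigma^r(w) < u$ directly. The main obstacle is to exclude the alternative $j_0 \ge |B_i|-p$, i.e.\ that the suffix of $B_i$ starting at position $p$ is actually a prefix of $u$. To do this I plan to combine three ingredients: the maximality of $u$, which gives $u' := \sigma^{|B_i|-p}(u) \le u$; the maximality of $\overline{B_i}$, which gives the cyclic-rotation inequalities $R_q(B_i) \le B_i$ for all $q$ (writing $R_q$ for rotation by $q$); and the primitivity of $B_i$ supplied by Facts~\ref{facts:infifacts}c).

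Playing the first two inequalities against each other position by position should force successive equalities $(B_i)_k = (B_i)_{p+k}$ for all $k$ strictly less than the putative disagreement position. Either this propagation reaches $k = |B_i|-p$, in which case $B_i$ acquires a nontrivial linear period $p$, which is incompatible with $B_i$ being both primitive and cyclically maximal (the standard fact that an anti-Lyndon word is border-free); or the propagation halts at some $k^\ast < |B_i|-p$, where the two rotation inequalities $R_p(B_i) \le B_i$ and $R_{|B_i|-p}(B_i) \le B_i$ squeeze from opposite directions to force $(B_i)_{p+k^\ast} = (B_i)_{k^\ast}$, contradicting the strict inequality $(B_i)_{k^\ast} < u'_{k^\ast} \le (B_i)_{p+k^\ast}$ that defines $k^\ast$. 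Either way the alternative is excluded, so $\sigma^r(w) \le u$. The bookkeeping for $p > 0$, where both $R_p$ and $R_{|B_i|-p}$ must be tracked in tandem, is the delicate step; the case $p = 0$ is considerably easier and reduces to a direct conflict between $\overline{B_i} < u$ and the maximality of $u$ past position $|B_i|$.
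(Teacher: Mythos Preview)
Your approach is essentially correct and is genuinely different from the paper's. The paper splits the work into two claims: (a) $B_i$ is strictly smaller than the length-$|B_i|$ prefix of $\cI(\bal)$, and (b) every proper suffix of $B_i$ is strictly smaller than $B_i$. Claim~(a) is proved using the explicit substitution structure of infimax sequences (the $\Lambda_n$ and the itineraries), while (b) is the anti-Lyndon border-free argument. You bypass (a) entirely: your proof uses only that $\overline{B_i}$ and $u$ are maximal, that $\overline{B_i}<u$, and that $B_i$ is primitive. This is more elementary and strictly more general --- it shows that any concatenation of primitive maximal-period blocks, each strictly below a maximal $u$, lies in $X(u)$.

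There is, however, a gap in your handling of the ``propagation halts at $k^\ast$'' case. The squeeze you propose, using both $R_p(B_i)\le B_i$ and $R_{|B_i|-p}(B_i)\le B_i$, does not obviously give what you claim: the two rotation inequalities have their first disagreements at possibly different positions, and the chain $(B_i)_{k^\ast}<u'_{k^\ast}\le (B_i)_{p+k^\ast}$ is not justified (indeed $R_p\le B_i$ at $k^\ast$ gives $(B_i)_{p+k^\ast}<(B_i)_{k^\ast}$, the reverse direction). The fix is simpler than what you wrote and uses the hypothesis $\overline{B_i}<u$ directly: assuming $j_0\ge m:=|B_i|-p$, you have $u_k=(B_i)_{p+k}$ for $k<m$. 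If $(B_i)_k=(B_i)_{p+k}$ failed first at some $k^\ast<m$, then from $\sigma^p(\overline{B_i})\le\overline{B_i}$ you get $(B_i)_{p+k^\ast}<(B_i)_{k^\ast}$; but then $u_k=(B_i)_k$ for $k<k^\ast$ and $u_{k^\ast}=(B_i)_{p+k^\ast}<(B_i)_{k^\ast}$, giving $u<\overline{B_i}$, a contradiction. So the propagation always reaches $k=m$, $B_i$ has period $p$, and the anti-Lyndon fact finishes it. Your $p=0$ sketch is fine: if $u$ began with $B_i$, then writing the first disagreement of $\overline{B_i}$ and $u$ as $q|B_i|+s$ with $q\ge1$, one gets $\sigma^{q|B_i|}(u)>u$, contradicting maximality.
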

\begin{proof}
We will show that, for each~$i$,
\begin{enumerate}[a)]
\item $B_i$ is strictly smaller than the initial length $|B_i|$
  subword of $\cI(\bal)$; and
\item every proper final subword of $B_i$ is strictly smaller than
  $B_i$,
\end{enumerate}
from which the result follows. Recall that, by convention, any word is
smaller than any of its proper initial subwords, so that the
lexicographic order on the set~$\cW$ of words is total.

\medskip

For~a), since $\overline{B_i}=\cI(\bal_i)<\cI(\bal)$, it is enough to
show that $B_i$ is not an initial subword of $\cI(\bal)$. 

Let $\Phi(\bal_i) = n_0\ldots n_r \overline{0}$. Since
$\cI(\bal_i)<\cI(\bal)$, there is some $j\le r$ such that $\Phi(\bal)
= n_0\ldots n_{j-1}m_j\ldots$, where $m_j<n_j$. Then
$\Lambda_{n_j}(k-1) = (k-1)\,0^{m_j}0\ldots$, and hence $B_i$ has an
initial subword of the form $P_i =
\Lambda_{\bn,j-1}((k-1)\,0^{m_j}0)$. 

Similarly, $\cI(\bal)$ has an initial subword
$P=\Lambda_{\bn,j-1}((k-1)\,0^{m_j}\,s\,W)$, where $s>0$ and~$W$ is a
word long enough to ensure that $|P|>|P'|$.

Since $\Lambda_{\bn,j-1}\colon\cW\to\cW$ is strictly order-preserving,
it follows that $P'<P$. Therefore $P'$ is not an initial subword of
$\cI(\bal)$, and so neither is $B_i$, as required.

\medskip

For~b), suppose for a contradiction that $B_i$ has a proper final
subword $W$ with $W>B_i$. Since
$\cI(\bal_i)=\overline{B_i}$ is a maximal sequence, $W$ must also be an
initial subword of $B_i$. Therefore there are words $U$ and $V$ of the
same length with $B_i = WU = VW$. Then $\cI(\bal_i) = \overline{WU} =
\overline{VW} > \overline{WV}$, with the inequality coming from the
maximality of $\cI(\bal_i)$ and
Facts~\ref{facts:infifacts}c). Therefore $U>V$, so that
$\sigma^{|W|}(\cI(\bal_i))=\overline{UW}>\overline{VW} = \cI(\bal_i)$, contradicting the maximality of
$\cI(\bal_i)$.

\end{proof}

\subsection{Compactification of the space of itineraries}
In this section we describe a compactification of~$\N^\N$ to a Cantor
set~$\cN$, and extend the map $S\colon\N^\N\to\cM$, which
associates an infimax sequence with each itinerary, over $\cN$ as an
order-preserving homeomorphism onto its image. The set~$\cN$ and its
topology and order are modelled by the function~$g$ whose graph is depicted in
Figure~\ref{fig:compactify-model}. In this figure $(L_n)_{n\ge 0}$ is
a sequence of mutually disjoint closed subintervals
of~$[0,1]$, and $g$ maps each $L_n$ affinely and increasingly
onto~$[0,1]$ and satisfies $g(0)=0$. The biggest set
\[C=\bigcap_{r=0}^\infty g^{-r}\left(\{0\} \cup \bigcup_{n=0}^\infty
L_n\right)\]
on which all of the iterates of~$g$ are defined is a Cantor set, and
the points of~$C$ correspond bijectively via their itineraries with
the union of $\N^\N$ and the set of finite words in~$\N$ terminated
with the symbol~$\infty$, which is regarded as the address of
$0$. The gaps in this Cantor set have points whose itineraries end
$\overline{0}$ at their left hand ends, and points whose itineraries are
terminated with~$\infty$ at their right hand ends.

\begin{figure}[htbp]
\begin{center}
\includegraphics[width=0.4\textwidth]{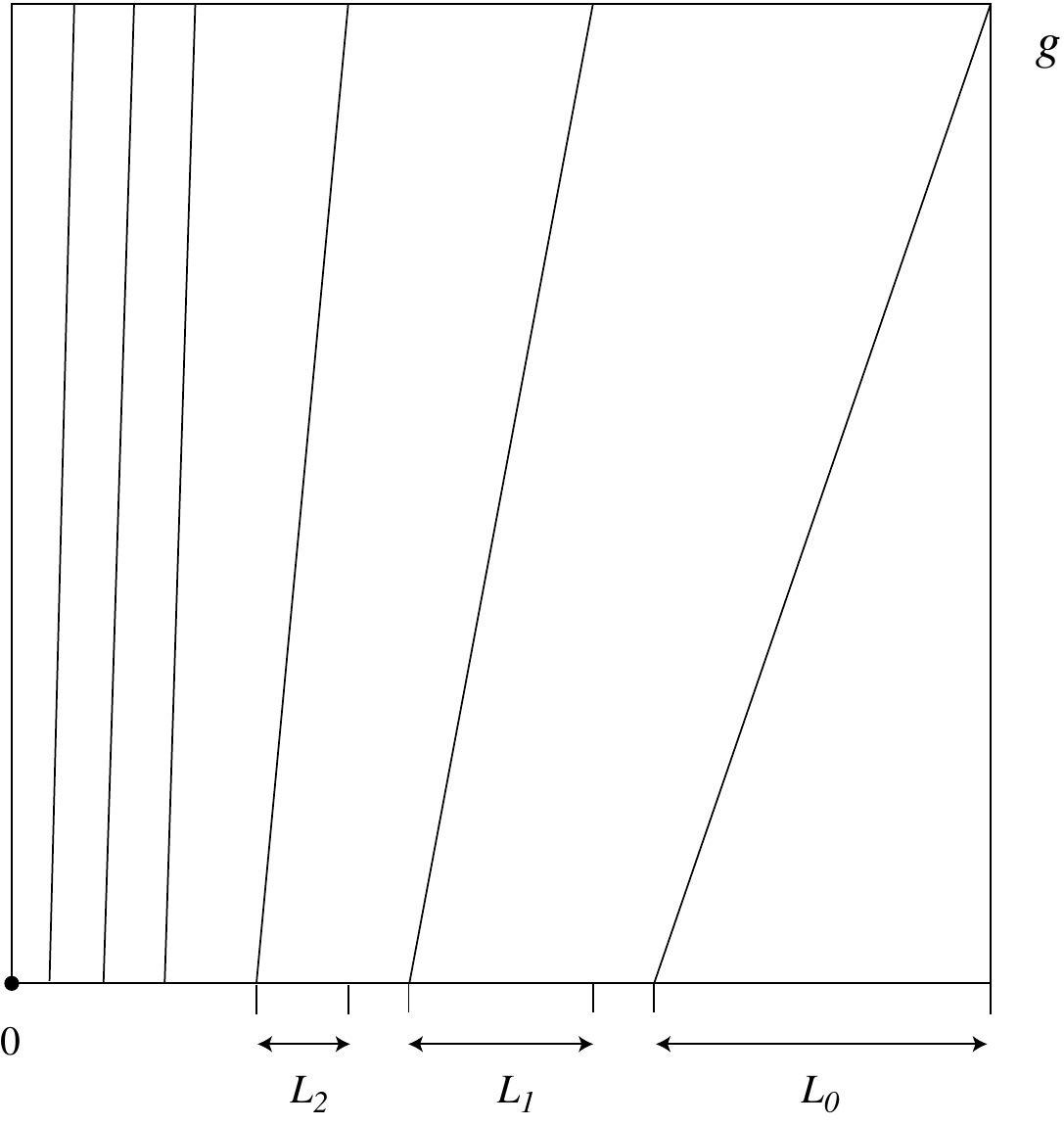}
\caption{Model for the topology and order on~$\cN$}
\label{fig:compactify-model}
\end{center}
\end{figure}

\begin{defns}[$\cN$, finite type]
We write 
\[\cN = \N^\N \cup \{W\infty\,:\,W\text{ is a (possibly trivial) word over the alphabet }\N\},\] 
Elements of~$\cN$ of the form~$W\infty$ are said to be of {\em
  finite type}.
\end{defns}

We extend the reverse lexicographic ordering of $\N^\N$ to~$\cN$. Because
finite type elements of $\cN$ are terminated by~$\infty$, every pair
$\bm,\bn$ of distinct elements of~$\cN$ first disagree at some
index~$r$ at which both $m_r$ and $n_r$ are either natural numbers
or~$\infty$: we say that $\bm < \bn$ if and only if either
$m_r=\infty$, or $m_r$ and $n_r$ are both natural numbers with
$m_r>n_r$.

We regard two elements of~$\cN$ as being close if either they agree up
to a large index, or if they have large entries up to the point where
they first disagree. To define a metric $d$ on~$\cN$ reflecting this we
write, for each pair $\bm\not=\bn$ of elements of~$\cN$,
\begin{eqnarray*}
R(\bm,\bn) &=& \min\{r\,:\,m_r\not=n_r\},\\
X(\bm,\bn) &=& r + \min\left(\sum_{s\le r}m_s,\,\, \sum_{s\le r}n_s\right)
\qquad \text{ where $r=R(\bm,\bn)$, and}\\
d(\bm,\bn) &=& 2^{-X(\bm,\bn)}.
\end{eqnarray*}
%All statements about metric checked 29-7-13
Notice that $\min(\sum_{s\le r}m_s, \sum_{s\le r}n_s) = \sum_{s<r}m_s
+ \min(m_r,n_r) < \infty$.  It is easily verified that $X(\bm,\bp) \ge
\min(X(\bm,\bn), X(\bn,\bp))$ for all $\bm,\bn,\bp\in\cN$, so that $d$
satisfies the triangle inequality. We will use the following property
of~$d$: if $\bn\in\cN$ and $n_{r}\in\N$ (i.e. $\bn$ is not a finite
type element of length~$r+1$ or less), then
\[d(\bm,\bn) < 2^{-(r+\sum_{s\le r}n_s)} \quad \implies \quad
\text{$\bm$ has initial subword $n_0n_1\ldots n_r$}.\] 

Observe that the metric is compatible with the
order on~$\cN$, in the sense that
\[
 \bm\le\bn\le\bp  \quad \implies \quad d(\bm,\bn) \le d(\bm,\bp)
 \quad\text{ and }\quad d(\bn,\bp) \le d(\bm,\bp).
\]

\medskip\medskip

\begin{lemma}
\label{lem:compact}
($\cN,d)$ is compact.
\end{lemma}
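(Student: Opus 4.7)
The plan is to prove sequential compactness, which is equivalent to compactness for metric spaces. Given any sequence $(\bn^{(i)})_{i\ge 1}$ in $\cN$, I will extract a convergent subsequence by a diagonal construction that examines the entries one coordinate at a time. The key observation that drives the argument is that the metric $d$ is designed so that convergence to a finite type element $W\infty$ occurs either when the subsequence is eventually equal to $W\infty$, or when the entries at position $|W|$ tend to infinity in $\N$ while the earlier entries stabilise at $W$.

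The inductive step at stage $r$ goes as follows. Suppose we have already passed to a subsequence and chosen $m_0,\ldots,m_{r-1}\in\N$ such that $n^{(i)}_s=m_s$ for every $i$ and every $s<r$; then each $\bn^{(i)}$ has a well-defined $r$-th entry in $\N\cup\{\infty\}$. If infinitely many $\bn^{(i)}$ have $n^{(i)}_r=\infty$, they all coincide with the single element $\bm=m_0\ldots m_{r-1}\infty\in\cN$, so that constant subsequence converges. Otherwise, thin to a subsequence with $n^{(i)}_r\in\N$: either some value $m_r\in\N$ is hit infinitely often, in which case we record $m_r$, thin further, and proceed to stage $r+1$; or $n^{(i)}_r\to\infty$ in $\N$ along a subsequence, in which case that subsequence converges to $\bm=m_0\ldots m_{r-1}\infty$. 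The latter is the only direct computation of the argument: here $R(\bn^{(i)},\bm)=r$ and $\sum_{s\le r}m_s=\infty$, so $X(\bn^{(i)},\bm)=r+\sum_{s<r}m_s+n^{(i)}_r\to\infty$, and hence $d(\bn^{(i)},\bm)\to 0$.

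If the procedure never terminates, we obtain digits $m_r\in\N$ for every $r\ge 0$ and a nested family of subsequences. The standard diagonal selection then produces indices $i_1<i_2<\cdots$ such that $\bn^{(i_r)}$ lies in the stage-$r$ subsequence, and so agrees with $\bm=(m_0,m_1,\ldots)\in\N^\N\subset\cN$ on at least its first $r+1$ entries. Therefore either $\bn^{(i_r)}=\bm$ or $R(\bn^{(i_r)},\bm)\ge r+1$, which forces $X(\bn^{(i_r)},\bm)\to\infty$ and hence $\bn^{(i_r)}\to\bm$ in $d$. In all cases we have extracted a convergent subsequence, proving the lemma.

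The only subtle point is recognising that the three possible behaviours of $(n^{(i)}_r)$ at each stage---stabilising at some $m_r\in\N$, being eventually equal to $\infty$, or tending to infinity in $\N$---are exactly the three behaviours that the shape of $d$ is designed to handle, and that the candidate limit produced in each case genuinely lies in $\cN$. Once this correspondence is noted, the diagonal scheme is entirely routine.
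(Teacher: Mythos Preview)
Your proof is correct, but it takes a different route from the paper. The paper exploits the total order on~$\cN$: since any sequence in a totally ordered set has a monotonic subsequence, it suffices to show that monotone sequences converge, and the paper then handles the increasing and decreasing cases separately (the latter splitting according to whether all coordinates are bounded). Your argument instead runs a direct diagonal extraction coordinate by coordinate, using only the metric structure and pigeonhole, without ever invoking the order. The paper's approach is a little slicker because the order does most of the bookkeeping for free; your approach is more self-contained and would work verbatim for any compactification of~$\N^\N$ built on the same ``large entries mean close to $\infty$'' principle, regardless of whether an order is present.
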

\begin{proof} %Checked 29-7-13
Since every sequence in~$\cN$ has a monotonic subsequence, it suffices
to show that every monotonic sequence converges.

Consider first an increasing sequence~$(\bn^{(r)})$, and assume
without loss of generality that it is not eventually
constant. Recalling that the order on~$\cN$ is reverse lexicographic, it
is straightforward to show inductively that for every $s\in\N$ the
sequence $\left(n_s^{(r)}\right)_{r\ge 0}$ is eventually defined and
takes some constant value~$m_s$. Then $\bn^{(r)}\to\bm$ as
$r\to\infty$.

Now let~$(\bn^{(r)})$ be a decreasing sequence which is not eventually
constant. If the sequence $\left(n_s^{(r)}\right)_{r\ge 0}$ is bounded
above by some~$K_s$ for all~$s\in\N$, then the sequence converges by
the same argument as in the increasing case. So suppose this is not
the case, and let~$S\in\N$ be least such that
$\left(n_S^{(r)}\right)_{r\ge 0}$ is not bounded above. As in the
increasing case, there are natural numbers $m_s$ for $s<S$ and a
natural number~$R$ such that $n_s^{(r)} = m_s$ for all $r>R$ and
$s<S$. Then the sequence $(\bn^{(r)})$ converges to $\bm =
m_0m_1\ldots m_{S-1}\infty$, since $d(\bn^{(r)}, \bm) = 2^{-\left(S +
  n_S^{(r)} + \sum_{s<S}m_s\right)}$ for all $r>R$, and $n_S^{(r)} \to
\infty$ as $r\to\infty$.
\end{proof}

\begin{defns}[$S\colon\cN\to\cM$,\,\, $\cJ$]
Extend the function $S\colon\N^\N\to \cM$ to~$\cN$ in the
natural way, using $\Lambda_\infty(k-1) = (k-1)\, \overline{0}$:
\[S(n_0\,n_1\,\ldots\, n_r\,\infty) = \Lambda_{n_0}\circ \Lambda_{n_1} \circ
\cdots \circ \Lambda_{n_r}\left((k-1)\,\overline{0}\right).\]
Write $\cJ = S(\cN)\subset\cM$ for the image of $S$, the union of the set of
infimax sequences with the countable set of sequences just defined.
\end{defns}

\medskip\medskip

\begin{lemma}
\label{lem:S-op-homeo}
$S\colon\cN\to\cJ$ is an order-preserving homeomorphism.
\end{lemma}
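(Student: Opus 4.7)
My plan is to establish the three properties of $S\colon\cN\to\cJ$ separately: surjectivity is immediate from the defining equality $\cJ := S(\cN)$; order-preservation yields injectivity (the order on $\cN$ being total); and together with continuity, these imply the homeomorphism claim because $\cN$ is compact by Lemma~\ref{lem:compact} and $\cJ\subset\Sigma$ is Hausdorff.

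For order-preservation I would reduce, via Facts~\ref{facts:infifacts}e), to the case in which at least one of $\bm,\bn\in\cN$ is of finite type. Let $R = R(\bm,\bn)$, let $V = m_0\ldots m_{R-1}$ be the common prefix, and abbreviate $\Lambda_V = \Lambda_{m_0}\circ\cdots\circ\Lambda_{m_{R-1}}$, $m^\ast = m_R$, $n^\ast = n_R$. Strict order-preservation of $\Lambda_V$ reduces the comparison of $S(\bm)$ and $S(\bn)$ to that of $\Lambda_{m^\ast}(T_m)$ with $\Lambda_{n^\ast}(T_n)$, for suitable tails $T_m,T_n\in\Sigma$ beginning with $k-1$; with the extension convention $\Lambda_\infty(k-1)=(k-1)\overline{0}$, each continuation equals $(k-1)\,0^{n^\ast}\ldots$ if $n^\ast\in\N$ and $(k-1)\overline{0}$ if $n^\ast=\infty$. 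Supposing $\bm<\bn$, so that $m^\ast>n^\ast$ in the extended ordering $0<1<\cdots<\infty$ (and hence $n^\ast\in\N$), inspection at index $n^\ast+1$ gives $S(\bm)<S(\bn)$: at that position $\Lambda_{m^\ast}(T_m)$ has entry $0$, whereas $\Lambda_{n^\ast}(T_n)$ carries the first letter of $\Lambda_{n^\ast}$ applied to the second letter of $T_n$, which is strictly positive by inspection of~(\ref{eq:lambda}).

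For continuity, suppose $\bn^{(r)}\to\bn$ in $\cN$. If $\bn = V\infty$ with $|V|=\ell$, convergence forces $\bn^{(r)}$ eventually to begin with $V$ and $\bn^{(r)}_\ell\to\infty$ in $\N\cup\{\infty\}$; writing $S(\bn^{(r)}) = \Lambda_V(\Lambda_{\bn^{(r)}_\ell}(T_r))$ with $T_r$ starting with $k-1$, the inner word $(k-1)\,0^{\bn^{(r)}_\ell}\ldots$ converges in the product topology to $(k-1)\overline{0}$, so continuity of $\Lambda_V$ on $\Sigma$ yields $S(\bn^{(r)})\to S(\bn)$. If instead $\bn\in\N^\N$, Facts~\ref{facts:infifacts}e) handles the sub-case $\bn^{(r)}\in\N^\N$ directly; for finite-type $\bn^{(r)} = V_r\infty$, I would pass to the auxiliary sequence $\tilde\bn^{(r)} := V_r\,m_r\,0^\infty\in\N^\N$, choosing $m_r$ large enough (using the finite-type continuity just proved at $V_r\infty$) that $S(\tilde\bn^{(r)})$ and $S(\bn^{(r)})$ agree on at least the first~$r$ digits, and observe that $\tilde\bn^{(r)}\to\bn$ in $\cN$, so that $S(\tilde\bn^{(r)})\to S(\bn)$ again by Facts~\ref{facts:infifacts}e).

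The main piece of bookkeeping is in the order-preservation step: verifying that the first letter of $\Lambda_{n^\ast}$ applied to any digit is strictly positive requires a case split according to whether that digit lies in $\{0\}$, $\{1,\ldots,k-3\}$, $\{k-2\}$, or $\{k-1\}$. Each case is routine from~(\ref{eq:lambda}), but together they are where the argument does its actual work.
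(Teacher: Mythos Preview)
Your proof is correct. The order-preservation argument is essentially identical to the paper's: both locate the first index of disagreement, observe that the smaller itinerary forces at least one more zero after the leading $(k-1)$, and use that every $\Lambda_n$-image begins with a positive digit.

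For continuity you take a somewhat different route from the paper. The paper argues from scratch in three cases according to whether $\bn$ is of finite type, of rational type, or in $\N^\N$ but not rational; the rational case requires a separate combinatorial fact about $\Lambda_{\bn,r}(\Lambda_0^R((k-1)\,0))$. You instead handle only the finite-type case directly and reduce all of $\N^\N$ to Facts~\ref{facts:infifacts}e), dispatching finite-type approximants via a diagonal approximation $\tilde\bn^{(r)}$. This is cleaner, since it avoids reproving continuity on $\N^\N$. The one point worth making explicit is that the subspace topology on $\N^\N\subset\cN$ coincides with the product topology (convergence in $(\cN,d)$ to a point of $\N^\N$ forces agreement on arbitrarily long initial blocks), so that Facts~\ref{facts:infifacts}e) genuinely applies; this is implicit in your argument but deserves a sentence.
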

\begin{proof} %Checked 29-7-13
To show that~$S$ is order-preserving, let $\bm,\bn\in\cN$ with
$\bm<\bn$, and write $r=R(\bm,\bn)$ so that $m_r>n_r$ (perhaps
$m_r=\infty$). Then there is some digit~$\ell\in\{0,\ldots,k-1\}$ such
that $S(\bn)$ has initial subword
\[\Lambda_{\bn,r-1}(\Lambda_{n_r}((k-1)\ell)) =
\Lambda_{\bn,r-1}((k-1)\,0^{n_r}\Lambda_n(\ell)).\] Now $S(\bm)$ has
initial subword $\Lambda_{\bn,r-1}((k-1)\,0^{n_r+1})$ because
\mbox{$m_r\ge n_r+1$}, and $(k-1)\,0^{n_r}\Lambda_n(\ell) >
(k-1)\,0^{n_r+1}$ since $\Lambda_n(\ell)$ starts with a digit other
than~$0$. Since the $\Lambda_n$ are strictly order-preserving, it
follows that $S(\bm)<S(\bn)$ as required.

\medskip

In particular $S$ is a bijection, and in view of the compactness
of~$\cN$ it only remains to show that~$S$ is
continuous. Let~$\bn\in\cN$. To show continuity of~$S$ at~$\bn$, we
distinguish three cases.

\medskip 

\begin{enumerate}[a)]
\item Suppose that $\bn = n_0\,n_1\,\ldots\, n_r\,\infty$ is of finite
  type, so that \mbox{$S(\bn) =
    \Lambda_{\bn,r}((k-1)\,\overline{0})$}. For each
  $N\in\N$, if $d(\bn,\bm) < 2^{-\left( r + \sum_{s\le r}n_s +
    N\right)}$ then $\bm$ has initial subword $n_0\,n_1\,\ldots\,
  n_r\, M$ for some $M>N$, so $S(\bm)$ has initial subword
  \mbox{$\Lambda_{\bn,r}((k-1)\,0^N)$}. Therefore $S(\bm)\to S(\bn)$ as
  $\bm\to\bn$.
\item Suppose that $\bn\in\N^\N$ is not of rational type, so that
  there are arbitrarily large~$r$ with $n_r\not=0$ and hence the length of
  the word $\Lambda_{\bn,r}(k-1)$ goes to~$\infty$ as
  $r\to\infty$. Given~$r\in\N$, if $d(\bn,\bm) < 2^{-\left(r+\sum_{s\le
      r}n_s\right)}$ then $m_s=n_s$ for all $s\le r$: therefore
  $\Lambda_{\bm,r}(k-1) = \Lambda_{\bn,r}(k-1)$ for all $\bm$
  sufficiently close to~$\bn$, so that $S(\bm)\to S(\bn)$ as
  $\bm\to\bn$.
\item Finally, suppose that $\bn = n_0\,n_1\,\ldots\,
  n_r\,\overline{0}$ is of rational type, so that we have $S(\bn) =
  \Lambda_{\bn,r}(\overline{k-1})$. It can be shown (see for
  example the proof of Lemma~4 of~\cite{lex}) that for every $R\ge 0$,
  the word $\Lambda_{\bn,r}(\Lambda_0^R((k-1)\,0))$ has initial
  subword $\Lambda_{\bn,r}((k-1)^{1+\lfloor R/(k-1)\rfloor})$. Now if
  $\bm\not=\bn$ and $d(\bn,\bm) < 2^{-\left(r+R+\sum_{s\le
      r}n_s\right)}$, then $\bm $ has initial subword $n_0n_1\ldots
  n_r 0^{R+T}m$ for some $T\ge 0$ and $m>0$ (perhaps $m=\infty$); and
  hence $S(\bm)$ has initial subword
  $\Lambda_{\bn,r}(\Lambda_0^{R+T}((k-1)\,0))$, agreeing with $S(\bn)$
  on a subword of length at least $1 + \lfloor
  R/(k-1)\rfloor$. Therefore \mbox{$S(\bm)\to S(\bn)$} as $\bm\to\bn$.
\end{enumerate}
\end{proof}

\begin{lemma}
\label{lem:cantor}
$(\cN,d)$ is a Cantor set.
\end{lemma}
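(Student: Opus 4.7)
The plan is to recall that a non-empty compact metric space is a Cantor set if and only if it is perfect and totally disconnected, so after noting that $\cN$ is non-empty and has already been shown compact by Lemma~\ref{lem:compact}, I would reduce the statement to verifying these two properties.

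For perfectness, I would exhibit, for each $\bn\in\cN$, a sequence in $\cN\setminus\{\bn\}$ converging to it, splitting into two cases according to the type of $\bn$. If $\bn = n_0\,n_1\,\ldots\,n_r\,\infty$ is of finite type, I would use the finite type sequences $\bn^{(N)} = n_0\,n_1\,\ldots\,n_r\,N\,\infty$, which first disagree with $\bn$ at index $r+1$ and satisfy $d(\bn,\bn^{(N)}) = 2^{-(r+1+N+\sum_{s\le r}n_s)}$, tending to $0$ as $N\to\infty$. If $\bn\in\N^\N$, I would truncate to the finite type sequences $\bn^{(r)} = n_0\,n_1\,\ldots\,n_r\,\infty$, which agree with $\bn$ in the first $r+1$ entries and so converge to $\bn$ by the metric's definition.

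For total disconnectedness, the cleanest route is to invoke Lemma~\ref{lem:S-op-homeo}: since $S\colon\cN\to\cJ\subset\Sigma$ is a homeomorphism, it suffices to observe that $\Sigma$ (with the product topology) is itself totally disconnected, hence so is the subspace $\cJ$, and hence so is $\cN$. Combined with compactness and perfectness, this finishes the proof.

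I do not expect a real obstacle here: the main subtlety is just bookkeeping the metric in the finite-type case to confirm that the approximating finite-type sequences are genuinely distinct from $\bn$ and converge to it in $d$. Everything else follows from the general characterisation of Cantor sets together with results already established in this section.
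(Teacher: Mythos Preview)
Your proof is correct and follows essentially the same approach as the paper: compactness from Lemma~\ref{lem:compact}, total disconnectedness via the homeomorphism $S$ onto a subset of~$\Sigma$, and perfectness by exhibiting explicit approximating sequences. The only cosmetic difference is in the choice of approximants---the paper bumps a single entry (for $\bn\in\N^\N$) or extends by a constant tail (for finite type), whereas you use finite-type truncations and extensions---but both choices work equally well.
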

\begin{proof} %Checked 29-7-13
It is a compact metric space which is homeomorphic to a subset
of~$\Sigma$ and so is totally disconnected. It therefore only
remains to show that every $\bn\in\cN$ is the limit of a sequence
$\bm^{(r)}$ of other elements of~$\cN$.

If $\bn\in\N^\N$ then we can take $m^{(r)}_r = n_r+1$ and $m^{(r)}_s =
n_s$ for all $s\not=r$. On the other hand, if $\bn = n_0\,\ldots\,
n_{R-1}\,\infty$ is of finite type, then we can take $m^{(r)}_s = n_s$
for all~$r$ if $0\le s < R$, and $m^{(r)}_s = r$ for all~$r$ if $s\ge
R$.
\end{proof}

The final lemma in this section shows that, as suggested by
Figure~\ref{fig:compactify-model}, pairs of consecutive elements
of~$\cN$ consist of one element of rational type and one of finite
type. 
\begin{lemma}
\label{lem:gaps}
Let $n_0, n_1, \ldots, n_R\in\N$ for some
$R\ge 0$. Then $n_0\,\ldots\,n_{R-1}\,(n_R+1)\,\overline{0}
\, < \, n_0\,\ldots\,n_{R-1}\,n_R\,\infty$ are consecutive elements
of~$\cN$.

On the other hand, every element of~$\cN$ which is not of rational
(respectively finite) type is the limit of a strictly decreasing
(respectively strictly increasing) sequence in~$\cN$.

\end{lemma}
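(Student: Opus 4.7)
The plan is to verify each statement directly from the definition of the reverse lexicographic order and the metric $d$ on $\cN$, choosing explicit witnesses for the limits required in the second part.

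For the first assertion, I write $\bp = n_0\,\ldots\,n_{R-1}\,(n_R+1)\,\overline{0}$ and $\bq = n_0\,\ldots\,n_{R-1}\,n_R\,\infty$. These first disagree at position~$R$, where both entries are natural numbers with $p_R = n_R+1 > n_R = q_R$, so $\bp < \bq$ by definition. To show they are consecutive, suppose $\bm\in\cN$ satisfies $\bp < \bm < \bq$. Since $\bp$ and $\bq$ agree on positions $0,\ldots,R-1$, the same case analysis shows $\bm$ must agree with both on those positions, else $\bm$ would have to be simultaneously $<$ and $>$ the common value at its first disagreement. Thus I only need to examine $m_R$. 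I would split into subcases: if $m_R<n_R$, then $\bm>\bq$ already by the order at position~$R$, a contradiction; if $m_R=n_R$, then the first disagreement between $\bm$ and $\bq$ occurs at some position $s>R$ where $q_s=\infty$ and $m_s$ is a natural number, giving $\bm>\bq$; if $m_R = n_R+1$, a symmetric argument using $\bp$'s tail of zeros (the smallest possible natural number) shows $\bm \le \bp$; and if $m_R>n_R+1$ then $\bm<\bp$. Every case contradicts $\bp<\bm<\bq$.

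For the second assertion, I exhibit explicit sequences. If $\bn\in\cN$ is not of rational type, I treat two cases. When $\bn = n_0\,\ldots\,n_R\,\infty$ is of finite type, set $\bm^{(r)} = n_0\,\ldots\,n_R\,r\,\infty$; each $\bm^{(r)}$ first disagrees with $\bn$ at position $R+1$ where $\bn$ has $\infty$, so $\bm^{(r)} > \bn$, and consecutive $\bm^{(r)}$ first disagree at position $R+1$ with increasing natural entries, giving $\bm^{(r)} > \bm^{(r+1)}$. When $\bn\in\N^\N$ is not of rational type, it has infinitely many nonzero entries at positions $r_1 < r_2 < \cdots$, and I define $\bm^{(k)}$ to agree with $\bn$ except at position $r_k$, where I subtract one. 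Then $\bm^{(k)} > \bn$ at position $r_k$, and $\bm^{(k+1)}$ differs from $\bm^{(k)}$ first at position $r_k$ with $m^{(k+1)}_{r_k} = n_{r_k} > n_{r_k}-1 = m^{(k)}_{r_k}$, so the sequence strictly decreases. For the strictly increasing case, $\bn\in\N^\N$ automatically, and I take $\bm^{(k)}$ to agree with $\bn$ except that $m^{(k)}_k = n_k+1$; similar bookkeeping yields $\bm^{(k)} < \bm^{(k+1)} < \bn$.

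Convergence in each case is immediate from the formula for $d$: the first disagreement index between $\bm^{(r)}$ (or $\bm^{(k)}$) and $\bn$ tends to infinity in the finite-type case and is $r_k$ (respectively $k$) in the two $\N^\N$ cases, while the truncated sum $\sum_{s\le r}\min(m_s,n_s)$ stays bounded below by the disagreement index itself, so $X(\bm^{(r)},\bn)\to\infty$ and $d(\bm^{(r)},\bn)\to 0$.

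No step is really an obstacle; the whole argument is a careful unpacking of the definitions. The only mildly delicate point is the case analysis in the first part, where one must remember that finite-type elements carry an $\infty$ that behaves as larger than any natural number under the order but also triggers the special clause ``$m_r=\infty$'' in the definition of~$<$; writing out the four subcases based on the value of $m_R$ handles this cleanly.
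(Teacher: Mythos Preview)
Your argument is correct and follows essentially the same approach as the paper: the paper dispatches the first assertion in one line (observing that $\bp$ is the largest element starting $n_0\ldots n_{R-1}(n_R+1)$ and $\bq$ the smallest starting $n_0\ldots n_{R-1}n_R$), and for the second assertion uses the same explicit witnesses you chose (referring back to the proof of Lemma~\ref{lem:cantor} for two of the three cases).

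One small slip in your convergence paragraph: in the finite-type case $\bn = n_0\ldots n_R\,\infty$ with $\bm^{(r)} = n_0\ldots n_R\,r\,\infty$, the first disagreement index is always $R+1$ and does \emph{not} tend to infinity. What actually drives $X(\bm^{(r)},\bn)\to\infty$ there is the $\min$ term, since $\min(m^{(r)}_{R+1},\,n_{R+1}) = \min(r,\infty) = r \to \infty$. Your construction and conclusion are fine; only the stated reason needs this correction.
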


\begin{proof} %Checked 29-7-13
It is clear that $n_0\,\ldots\,n_{R-1}\,(n_R+1)\,\overline{0}$ and
$n_0\,\ldots\,n_{R-1}\,n_R\,\infty$ are consecutive, the former being
the largest element of~$\cN$ starting $n_0\,\ldots\,n_{R-1}\,(n_R+1)$ and the
latter the smallest element starting $n_0\,\ldots\,n_{R-1}\,n_R$.

If $\bn$ is not of finite type then, as in the proof of
Lemma~\ref{lem:cantor}, there is a sequence $\bm^{(r)}\to\bn$ with
$\bm^{(r)}<\bn$ for all~$r$. 

Suppose then that $\bn$ is not of rational type. If it is of finite
type then, as in the proof of Lemma~\ref{lem:cantor}, there is a
sequence $\bm^{(r)}\to\bn$ with $\bm^{(r)}>\bn$ for all~$r$. On the
other hand, if $\bn\in\N^\N$ then there is an increasing sequence
$i_r\to\infty$ with $n_{i_r}>0$ for each~$r$, and we can define a
sequence $(\bm^{(r)})$ converging to~$\bn$ from above by taking
$m_s^{(r)} = n_s$ for $s\not=i_r$ and $m_{i_r}^{(r)} = n_{i_r}-1$.
\end{proof}

\begin{defn}[Rational-finite pair]
For every~$R\ge 0$, and every finite sequence of natural numbers
$n_0,\ldots,n_{R-1},n_R$, the consecutive pair of elements
$n_0\,\ldots\,n_{R-1}\,(n_R+1)\,\overline{0}$ and
$n_0\,\ldots\,n_{R-1}\, n_R\,\infty$ of~$\cN$ is called a {\em
  rational-finite pair}.
\end{defn}

Thus every element of~$\cN$ of rational type apart from $\overline{0}$,
and every element of finite type apart from~$\infty$, belongs to a
rational-finite pair.

\medskip

It will be convenient to extend $\Phi\colon\Delta'\to\N^\N$ to a
(still lower semi-continuous) function $\Phi\colon\Delta\to\cN$ by
setting $\Phi(\bal) = \infty$ if $\bal\in\cF = \Delta\setminus
\Delta'$; and hence to extend $\cI\colon\Delta'\to\cM$ to a
function $\cI\colon\Delta\to\cM$ by $\cI = S\circ\Phi$. This
extension has no dynamical significance (in particular, $\cI(\bal) =
(k-1)\overline{0}$ when $\bal\in\Delta\setminus\Delta'$ is not an
$\bal$-infimax), but will make the statements of some results cleaner.

%---------------------------------------------------------------

\section{Digit frequency sets of symbolic $\beta$-shifts}
\label{sec:beta-shift}
\subsection{Preliminaries}
Let~$w\in\Sigma$. The {\em symbolic $\beta$-shift} associated to~$w$
is the subshift \mbox{$\sigma\colon X(w)\to X(w)$}, where
\[X(w) = \{v\in\Sigma\,:\, \sigma^r(v) \le w \text{ for all
}r\in\N\}.\] Since the supremum of any shift-invariant subset of
$\Sigma$ is a maximal sequence, and since moreover $X(w) = X(\sup
X(w))$, it suffices to consider the case where~$w\in\cM$, which we
henceforth assume. We shall also assume that $w$ contains (and hence
starts with) the digit~$k-1$, since otherwise we could decrease the
value of~$k$. 

\begin{defns}[$\cM^*$, $\DF(w)$ for $w\in\cM^*$]
Denote by $\cM^*$ the set of elements of~$\cM$ which
start with the digit $k-1$, and write
\[
\DF(w) = 
\{
   \bal\in\Delta\,:\,X(w) \cap \cR(\bal) \not=\emptyset
\}
 \subset \Delta
\]
for each $w\in\cM^*$, the set of digit frequencies of
elements of $X(w)$.
\end{defns}

The following lemma is the fundamental result which connects digit
frequency sets to the infimaxes of Section~\ref{sec:infimax}.

\begin{lemma}
\label{lem:describe-lfs}
Let $w\in\cM^*$. Then
\[\DF(w) = \{\bal\in\Delta\,:\, \cI(\bal) \le w\}.\]
\end{lemma}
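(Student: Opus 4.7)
The plan is to prove the two inclusions separately.

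For the forward inclusion, I take $\bal \in \DF(w)$ with a witness $v \in X(w) \cap \cR(\bal)$. When $\bal \in \Delta'$, Facts~\ref{facts:infifacts}(b) yields $\cI(\bal) \le \sup_{r \ge 0} \sigma^r(v)$, and since $\sigma^r(v) \le w$ for every $r$ by definition of $X(w)$, we conclude $\cI(\bal) \le w$. When $\bal$ lies in $\cF = \Delta \setminus \Delta'$, the extension of $\cI$ described at the end of Section~\ref{sec:infimax} gives $\cI(\bal) = (k-1)\overline{0}$, which is automatically $\le w$ because $w \in \cM^*$ begins with the digit $k-1$.

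For the reverse inclusion, suppose $\cI(\bal) \le w$; I need to exhibit some $v \in X(w) \cap \cR(\bal)$. The regular case is immediate: if $\cI(\bal) \in \cM(\bal)$ (which by Facts~\ref{facts:infifacts}(f) covers all rational $\bal$), set $v := \cI(\bal)$. Then $v$ is maximal with digit frequency $\bal$, and $\sigma^r(v) \le v = \cI(\bal) \le w$ shows $v \in X(w)$.

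The main work is the exceptional case, where $\cI(\bal)$ has no well-defined digit frequency. Here I would build $v$ as a concatenation $B_{i_1}^{N_1}\,B_{i_2}^{N_2}\cdots$ of repeating blocks of rational infimaxes converging to $\bal$. Corollary~\ref{cor:ball-less} produces points arbitrarily close to $\bal$ with strictly smaller $\Phi$-value; because the maps $\Upsilon_{\bn,r}$ used in its proof are projective with rational coefficients, these approximants can be chosen rational, yielding $\bbeta_i \to \bal$ with $\cI(\bbeta_i) < \cI(\bal)$. Writing $B_i$ for the repeating block of $\cI(\bbeta_i)$ (so $B_i$ has digit frequency $\bbeta_i$) and invoking Lemma~\ref{lem:concatenate} on the sequence in which each $\bbeta_i$ is listed $N_i$ times, every shift of $v$ satisfies $\sigma^r(v) \le \cI(\bal) \le w$, placing $v$ in $X(w)$. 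The multiplicities $N_i$ are then chosen inductively to be large enough that the running frequency after the $i$-th block lies within $1/i$ of $\bbeta_i$; since $\bbeta_i \to \bal$, these checkpoint frequencies converge to $\bal$, and a straightforward interpolation bound handles the frequencies between checkpoints.

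For $\bal \in \cF$ the reverse direction needs no infimax machinery: approximate $\bal$ by rational points in $\cF$, realize each by a finite word over $\{0,1,\ldots,k-2\}$, and concatenate with suitable multiplicities so the running frequencies tend to $\bal$. Every shift of the resulting $v$ begins with a digit at most $k-2 < k-1 = w_0$, so $v \in X(w)$ trivially. I expect the main obstacle to be the exceptional case above: verifying that the approximants from Corollary~\ref{cor:ball-less} can be taken rational, and carefully arranging the bookkeeping on the $N_i$ so that the running frequency genuinely converges to the target $\bal$ rather than drifting off to some nearby limit.
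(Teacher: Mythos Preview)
Your proposal is correct and follows essentially the same route as the paper: both use Facts~\ref{facts:infifacts}(b) for one direction, and for the other build $v$ as a concatenation of repeating blocks of rational infimaxes approximating $\bal$ (produced via Lemma~\ref{lem:face-converge}/Corollary~\ref{cor:ball-less}), with Lemma~\ref{lem:concatenate} giving membership in $X(w)$. The paper organises the argument as a trichotomy $\cI(\bal)\lessgtr w$ and provides explicit estimates controlling the running frequency at \emph{every} position (in particular bounding $|B_{j+1}|$ against the length accumulated so far, which your ``interpolation bound'' needs but your stated condition on the $N_i$ does not yet guarantee), whereas you do two inclusions and leave these estimates as a sketch; the substance is the same.
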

\begin{proof}
Let $\bal\in\Delta$. First note that if $\alpha_{k-1}=0$ then
$\cI(\bal) = (k-1)\,\overline{0} \le w$ for all $w\in\cM^*$, and
\mbox{$\bal\in\DF(w)$} since every sequence which doesn't contain the
digit~$k-1$ belongs to~$X(w)$. We can therefore assume that
$\alpha_{k-1}\not=0$, i.e. that $\bal\in\Delta'$.

\medskip

If $\cI(\bal) < w$ then, since $\cI(\bal) = \inf\cM(\bal)$
(Facts~\ref{facts:infifacts}a)), there is some $v\in\cM(\bal)$ with
$v<w$, and hence $v\in X(w)$. Therefore $\bal\in\DF(w)$.

\medskip

If $\cI(\bal)>w$ then every $v\in\cR(\bal)$ satisfies
$w<\cI(\bal)\le\sup_{r\ge 0}\sigma^r(v)$ by
Facts~\ref{facts:infifacts}b), so that $v\not\in X(w)$. Therefore
$X(w)\cap\cR(\bal) = \emptyset$, and hence $\bal\not\in\DF(w)$.

\medskip

Suppose then that $\cI(\bal) = w$. We shall construct an element
$v$ of $X(w)\cap\cR(\bal)$. We can assume that $\bal$ is exceptional,
and in particular is not rational, since otherwise we can take $v=w$
by Facts~\ref{facts:infifacts}f). Write $\bn=\Phi(\bal)$.

By Lemma~\ref{lem:face-converge}, $\bal$ can be approximated
arbitrarily closely by rational elements of $\DF(w)$: for the simplex
$\Phi^{-1}(\bn)$, which contains~$\bal$, can be approximated arbitrarily closely by
the simplices $\cF_{\bn,r}$, which have rational vertices. Moreover, any
element~$\bbeta$ of $\cF_{\bn,r}$ has itinerary $\Phi(\bbeta)<\bn$
by~(\ref{eq:cF-itin}), so that $\cI(\bbeta) < \cI(\bal) = w$, and
hence $\bbeta\in\DF(w)$ by the first part of the proof.

The idea of the construction is to pick a sequence $(\bal_n)$ of
rational elements of $\DF(w)$ with $\bal_n\to\bal$, and a sequence of
positive integers $(M_n)$, such that, writing $B_n$ for the repeating
block of $\cI(\bal_n)$, the sequence
\begin{equation}
\label{eq:v}
v = B_1^{M_1}\,B_2^{M_2}\,B_3^{M_3}\, \ldots
\end{equation}
lies in $\cR(\bal)$. Since it also lies in $X(w) = X(\cI(\bal))$ by
Lemma~\ref{lem:concatenate}, this will establish that $\bal\in\DF(w)$
as required.

Pick any strictly decreasing sequence $\epsilon_n\to0$, and choose the sequence
$(\bal_n)$ so that $d(\bal_n, \bal) < \epsilon_n/2$. Write $\bal_n =
\bp_n/q_n$, where $\bp_n$ is an integer vector and $q_n =
|B_n|$. Choose the positive integers~$M_n$ inductively to satisfy $M_1
= 1$ and, for $n>1$,
\begin{equation}
\label{eq:chooseM}
\frac
{\max\left(q_{n+1},
\sum_{\ell=1}^{n-1} M_\ell q_\ell
\right)}
{\sum_{\ell=1}^n M_\ell q_\ell} \,<\,
\frac
{\epsilon_n}{\sqrt{k}}.
\end{equation}
Define $v\in\Sigma$ by~(\ref{eq:v}). We specify each initial subword
of $v$ using a triple of integers $(n,i,j)$ with $n\ge 0$, $0\le i <
M_{n+1}$, and $0\le j< q_{n+1}$: the subword $V(n,i,j)$ of $v$ has
length $j + iq_{n+1} + \sum_{\ell=1}^n M_\ell q_\ell$: that is,
\[
V(n,i,j) = B_1^{M_1}\, \ldots\,B_{n}^{M_n}\,B_{n+1}^i \Word{B_{n+1}}{j}.
\]
We shall show that, for every such triple, the digit frequency vector
$\df(V(n,i,j))$ of $V(n,i,j)$ is within distance $5\epsilon_n$ of
$\bal$, which will establish that $v\in\cR(\bal)$ as required.

To simplify notation, write $\ba_n = \sum_{\ell=1}^n M_\ell\bp_\ell$
and $b_n = \sum_{\ell=1}^n M_\ell q_\ell$, so that
$\ba_n/b_n=\df(V(n,0,0))$. We will use the following simple estimate:
if $\bA/B,\bC/D\in\Delta_k$, where $\bA,\bC\in\N^k$ and $B,D\in\N$,
then $(\bA+\bC)/(B+D) - \bA/B = D(\bC/D-\bA/B)/(B+D)$, and hence
\begin{equation}
\label{eq:delta-est}
d\left(
\frac{\bA}{B}, \frac{\bA+\bC}{B+D}
\right)
\le \sqrt{k}\,\frac{D}{B+D}.
\end{equation}
Now
\begin{align*}
d(\bal, \df(V(n, i, j))) &\le
 d(\bal, \bal_n) \,+\, d(\bal_n, \df(V(n,0, 0)))  \\
&\,+\, d(\df(V(n, 0,0)), \df(V(n, i, 0))) \,+\,
 d(\df(V(n, i, 0)), \df(V(n, i, j))),
\end{align*}
and we estimate each term.

For the first, $d(\bal, \bal_n) <\epsilon_n/2 < \epsilon_n$ by choice of the sequence
$(\bal_n)$.

For the second,
\begin{equation}
\label{eq:second}
d(\bal_n, \df(V(n,0, 0))) = d(\bp_n/q_n, \ba_n/b_n) = d\left(
\frac{M_n\bp_n}{M_n q_n},
\frac{\ba_{n-1}+ M_n\bp_n}{b_{n-1}+M_nq_n}
\right)
\le \sqrt{k}\,\frac{b_{n-1}}{b_{n-1}+M_nq_n}
<\epsilon_n
\end{equation}
using (\ref{eq:delta-est}) and~(\ref{eq:chooseM}).

For the third, we have
\[
\df(V(n,i,0)) = \left(
\frac{b_n}{b_n+iq_{n+1}}
\right)\,\frac{\ba_n}{b_n} +
\left(
\frac{iq_{n+1}}{b_n + iq_{n+1}}
\right)\,\frac{\bp_{n+1}}{q_{n+1}},
\]
which lies on the line segment with endpoints $\ba_n/b_n$ and
$\bal_{n+1}$. Since $\df(V(n,0,0))=\ba_n/b_n$, the third term is bounded above by
$d(\ba_n/b_n, \bal_n) + d(\bal_n, \bal_{n+1})$, which is less than
$2\epsilon_n$ by~(\ref{eq:second}) and choice of the sequence
$(\bal_n)$.

Finally, writing $\df(\Word{B_{n+1}}{j}) = \br_{n,j}/j$, the fourth
term is
\[
d(\df(V(n,i,0)),\df(V(n,i,j))) = d\left(
\frac{\ba_n+i\bp_{n+1}}
{b_n+iq_{n+1}},
\frac{\ba_n+i\bp_{n+1}+\br_{n,j}}
{b_n+iq_{n+1}+j}
\right)
< \sqrt{k}\,\frac{j}{b_n+iq_{n+1}} \le \sqrt{k}\,\frac{j}{b_n} < \epsilon_n
\]
by~(\ref{eq:delta-est}) and~(\ref{eq:chooseM}), since $j<q_{n+1}$. This completes the proof.
\end{proof}

\begin{corollary}
\label{cor:dfs-compact}
$\DF(w)$ is compact for all $w\in\cM^*$.
\end{corollary}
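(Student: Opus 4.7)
The plan is to use Lemma~\ref{lem:describe-lfs} to rewrite $\DF(w) = \{\bal \in \Delta : \cI(\bal) \le w\}$, and then exploit the lower semi-continuity of the extended itinerary map $\Phi : \Delta \to \cN$ to conclude that this sublevel set is closed in the compact simplex~$\Delta$, hence compact.

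The key step is to replace the condition $\cI(\bal) = S(\Phi(\bal)) \le w$ on $\Sigma$ by an equivalent condition $\Phi(\bal) \le \bn_w$ on $\cN$, so that lower semi-continuity of $\Phi$ can be applied directly. I would set
\[
\bn_w \;:=\; \sup\{\bn \in \cN : S(\bn) \le w\}.
\]
The set on the right is non-empty: it contains $\infty$, since $S(\infty) = (k-1)\,\overline{0}$ is the smallest element of $\cM^*$, hence is at most~$w$. It is also closed in $\cN$, being the preimage under the continuous map $S$ (Lemma~\ref{lem:S-op-homeo}) of the set $\{v \in \Sigma : v \le w\}$, which is closed in the product topology. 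Because $\cN$ is a compact totally ordered metric space whose order is compatible with its topology (Lemma~\ref{lem:compact}), the supremum of this closed non-empty set is attained, so $S(\bn_w) \le w$. Since $S$ is order-preserving, the set $\{\bn \in \cN : S(\bn) \le w\}$ is downward closed and therefore coincides with $\{\bn \in \cN : \bn \le \bn_w\}$. This yields
\[
\DF(w) \;=\; \{\bal \in \Delta : \Phi(\bal) \le \bn_w\},
\]
and lower semi-continuity of $\Phi$ (asserted just before Section~\ref{sec:beta-shift}) tells us precisely that this sublevel set is closed in~$\Delta$.

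The only point requiring a little care is the claim that the supremum of a closed non-empty subset of $\cN$ is attained. This can be checked directly by choosing an increasing sequence in the set converging to the supremum (using order--topology compatibility on $\cN$) and invoking the fact that closed subsets of $\cN$ are sequentially closed. Everything else is a formal consequence of results already established: Lemma~\ref{lem:describe-lfs} identifies $\DF(w)$ with a sublevel set of $\cI$, Lemmas~\ref{lem:compact} and~\ref{lem:S-op-homeo} furnish the compact ordered target $\cN$ and the order-preserving homeomorphism $S$, and the lower semi-continuity of $\Phi$ does the rest.
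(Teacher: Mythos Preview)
Your proof is correct and follows essentially the same approach as the paper: identify $\DF(w)$ as a sublevel set via Lemma~\ref{lem:describe-lfs} and use lower semi-continuity to conclude it is closed in the compact simplex~$\Delta$. The paper's version is slightly more direct, observing that $\cI = S\circ\Phi$ is itself lower semi-continuous (combining Facts~\ref{facts:infifacts}d) and~e)), so your detour through $\bn_w$---while correct, and anticipating the later Definition~\ref{defn:n_w}---is not needed here.
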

\begin{proof}
Immediate from Lemma~\ref{lem:describe-lfs} and the lower
semi-continuity of $\cI = S\circ\Phi$ (Facts~\ref{facts:infifacts}d)
and e)).
\end{proof}

\begin{remark}
\label{rmk:subsequential}
If we were to define the digit frequency set subsequentially, by
\[\DF'(w) =
\{\bal\in\Delta\,:\,X(w)\cap\cR'(\bal)\not=\emptyset\},\]
then the proof of Lemma~\ref{lem:describe-lfs} goes through, using the ``primed''
versions of Facts~\ref{facts:infifacts}a) and b) (see
Facts~\ref{facts:infifacts}g)), to show that $\DF'(w) =
\{\bal\in\Delta\,:\,\cI(\bal)\le w\}$. That is, the digit frequency
set isn't sensitive to whether or not it is defined
subsequentially. We shall see (Lemma~\ref{lem:expansion-translate})
that the digit frequency set $\DF(\beta)$, where $\beta\in(k-1,k)$,
can be written as $\DF(w)$ for an appropriate $w\in\cM^*$, so that it
too can be defined subsequentially without affecting its value.
\end{remark}

A first consequence of Lemma~\ref{lem:describe-lfs} is that we can
restrict attention to $w\in\cJ$ rather than all $w\in\cM^*$, as
expressed by the next lemma.

\begin{lemma}
\label{lem:w-in-J}
Let $w\in\cM^*$, and let $s = \max\,\{t\in\cJ\,:\,t\le
w\}$. Then \mbox{$\DF(s) = \DF(w)$}. 
\end{lemma}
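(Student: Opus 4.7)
The plan is to deduce this essentially immediately from Lemma~\ref{lem:describe-lfs}, once the max defining $s$ is shown to exist. First I would verify that $s$ is well-defined. By Lemmas~\ref{lem:compact} and~\ref{lem:S-op-homeo}, $\cJ = S(\cN)$ is the continuous image of a compact space, hence compact in $\Sigma$. The set $\{t \in \cJ\,:\, t \le w\}$ is closed in $\cJ$ (since $\{t \in \Sigma\,:\, t\le w\}$ is closed in the product topology) and non-empty, because it contains $S(\infty) = (k-1)\,\overline{0}$, which is $\le w$ by virtue of $w\in\cM^*$ starting with the digit $k-1$. Hence this set attains its maximum and $s$ exists.

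Next, since $s \le w$, the inclusion $\DF(s) \subseteq \DF(w)$ is immediate from Lemma~\ref{lem:describe-lfs}. For the reverse inclusion, let $\bal \in \DF(w)$. Lemma~\ref{lem:describe-lfs} gives $\cI(\bal) \le w$. But $\cI = S \circ \Phi$ takes values in $S(\cN) = \cJ$ (including on the face $\cF$, thanks to the extension $\Phi(\bal) = \infty$ for $\bal \in \cF$ and $S(\infty) = (k-1)\,\overline{0}$), so $\cI(\bal) \in \cJ$. By the maximality of $s$, we conclude $\cI(\bal) \le s$, and a second application of Lemma~\ref{lem:describe-lfs} yields $\bal \in \DF(s)$.

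There is no real obstacle here: the lemma is a corollary of Lemma~\ref{lem:describe-lfs} together with compactness of $\cJ$. The only points requiring any care are the observation that $\cI$ maps all of $\Delta$ (not just $\Delta'$) into $\cJ$, and the non-emptiness check ensuring that $s$ makes sense.
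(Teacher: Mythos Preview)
Your proof is correct and follows essentially the same approach as the paper: both deduce the result directly from Lemma~\ref{lem:describe-lfs} together with the fact that $\cI(\bal)\in\cJ$, using compactness of $\cJ$ (via Lemmas~\ref{lem:compact} and~\ref{lem:S-op-homeo}) to ensure the maximum exists. The paper packages the argument as a single chain of biconditionals, while you spell out the two inclusions and the non-emptiness check more explicitly, but the content is the same.
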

\begin{proof}
Note that the maximum exists by Lemmas~\ref{lem:compact}
and~\ref{lem:S-op-homeo}. Moreover, for each $\bal\in\Delta$ we have,
by Lemma~\ref{lem:describe-lfs},

\[\bal\in\DF(w) \iff \cI(\bal) \le w \iff
\cI(\bal)\in\{t\in\cJ\,:\,t\le w\}\\ \iff \cI(\bal) \le s \iff
\bal\in\DF(s).\]
%% \begin{multline*}\bal\in\DF(w) \implies \cI(\bal) \le w \implies
%% \cI(\bal)\in\{t\in\cJ\,:\,t\le w\}\\ \implies \cI(\bal) \le s \implies
%% \bal\in\DF(s).\end{multline*}
\end{proof}

\begin{remark}
Lemma~\ref{lem:w-in-J} means that, as $w$ increases, the set
$\DF(w)$ undergoes bifurcations only as $w$ passes through elements
of~$\cJ$. In particular, $\DF(w)$ mode locks as $w$ passes through
each complementary gap of the Cantor set~$\cJ$. In fact, if
$w\not\in\cJ$ then $s = \max\{t\in\cJ\,:\,t\le w\}$ is of rational
type by Lemmas~\ref{lem:S-op-homeo} and~\ref{lem:gaps}, and we will
see (Theorem~\ref{thm:extreme-points}) that $\DF(s)$ is a
polytope with rational vertices in this case.
\end{remark}

In view of the fact that $S\colon\cN\to\cJ$ is an order-preserving
homeomorphism (Lemma~\ref{lem:S-op-homeo}), we can equally well study
the sets $\DF(w)$, where $w\in\cJ$, in terms of itineraries,
motivating the following definition.
\begin{defn}[$\DF(\bn)$ for $\bn\in\cN$]
For each~$\bn\in\cN$, define
\[\DF(\bn) := \DF(S(\bn)).\]
\end{defn}
Then Lemma~\ref{lem:describe-lfs} reads
\begin{equation}
\label{eq:DFn}
\DF(\bn) = \{\bal\in\Delta\,:\,\Phi(\bal) \le \bn\}.
\end{equation}
For $\bal\in\DF(\bn) \iff \bal\in\DF(S(\bn)) \iff \cI(\bal)\le
S(\bn) \iff \Phi(\bal)\le\bn$, since $\cI = S\circ \Phi$ and $S$ is an
order-preserving homeomorphism.

\begin{remark}
We have now defined digit frequency sets for three types of objects:
numbers \mbox{$\beta\in(k-1,k)$}, digit sequences $w\in\cM^*$, and
itineraries $\bn\in\cN$. In each case the collection of digit frequency
sets is the same, with the exception that $\Delta$ and $\cF$ can be
realised as digit frequency sets of elements of $\cM^*$ (namely
$\overline{k-1}$ and $(k-1)\,\overline{0}$) and of elements of $\cN$
(namely $\overline{0}$ and $\infty$), but not of elements of $(k-1,k)$
-- although they are, of course, the digit frequency sets of $k$ and
of $k-1$ respectively. To show that the collections are otherwise the
same, it is enough to observe (see Definitions~\ref{defn:beta_n}
and~\ref{defn:n_w} and Lemma~\ref{lem:expansion-translate}) that there
are digit frequency set preserving maps
\begin{eqnarray*}
\beta\mapsto w_\beta \,&\colon& \,
(k-1,k)\to\cM^*\setminus\{\overline{k-1},\,(k-1)\,\overline{0}\}, \\
w\mapsto \max\,\{\bn\in\cN\,:\,S(\bn)\le w\} \,&\colon& \,
\cM^*\setminus\{\overline{k-1},\,(k-1)\,\overline{0}\} \to
\cN\setminus\{\overline{0},\,\infty\}, \text{ and}\\
\bn\mapsto \beta(\bn) \,&\colon& \,
\cN\setminus\{\overline{0},\,\infty\} \to (k-1,k).
\end{eqnarray*}
\end{remark}

\medskip

Using (\ref{eq:DFn}) and Lemma~\ref{lem:gaps}, together with the fact
that elements of~$\cN$ of finite type are not in the image of~$\Phi$,
we have
\begin{equation}
\label{eq:infinity-LF}
\DF(n_0\,\ldots\, n_{R-1}\, (n_{R}+1)\, \overline{0}) =
\DF(n_0\,\ldots\, n_{R-1}\,n_R\,\infty)
\end{equation}
for all $n_0,\,\ldots,\, n_R$: the digit frequency set of an element
of finite type other than~$\infty$ is the same as that of its rational
pair. In particular, when describing the different possible structures
of $\DF(\bn)$ as $\bn$ varies in~$\cN$, we can restrict to the case $\bn\in\N^\N$.

\subsection{Convexity of the digit frequency set}

\begin{defn}[$T_n$]
For each $n\ge 0$, let $T_n$ be the simplex
\[
T_n = \{\bal\in\Delta\,:\,(n+1)\,\alpha_{k-1} \le \alpha_0\}\subset\Delta,
\]
the union of the simplices $\Delta_m$ for $m>n$ together with the face
$\alpha_{k-1}=0$; or, equivalently, the set of $\bal$ with
$\Phi(\bal)_0 > n$ (perhaps $\Phi(\bal)_0 = \infty$). 
\end{defn}
The following lemma describes the structure of digit frequency sets
recursively.
\begin{lemma}
\label{lem:lfs-recursive}
Let~$\bn\in\N^\N$. Then
\[
\DF(\bn) = T_{n_0} \cup K_{n_0}^{-1}(\DF(\sigma(\bn))),
\]
where $\sigma\colon\N^\N\to\N^\N$ is the shift map.
\end{lemma}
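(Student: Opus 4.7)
The plan is to apply the characterisation $\DF(\bn) = \{\bal \in \Delta\,:\,\Phi(\bal) \le \bn\}$ from~(\ref{eq:DFn}), together with the defining identities $T_{n_0} = \{\bal \in \Delta\,:\,\Phi(\bal)_0 > n_0\}$ (using the convention $\Phi(\bal)_0 = \infty$ when $\bal \in \cF$) and $\Delta_{n_0} = \{\bal \in \Delta'\,:\,\Phi(\bal)_0 = n_0\}$, and to split on the value of $\Phi(\bal)_0 \in \N \cup \{\infty\}$ relative to $n_0$. The key algebraic observations are that $K$ coincides with $K_{n_0}$ on $\Delta_{n_0}$, so $\Phi \circ K_{n_0} = \sigma \circ \Phi$ on $\Delta_{n_0}$ by definition of the itinerary; and that, since the order on $\cN$ is reverse lexicographic, agreement at the initial entry reduces the comparison $\Phi(\bal) \le \bn$ to the comparison $\sigma(\Phi(\bal)) \le \sigma(\bn)$.

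For the inclusion $\DF(\bn) \subset T_{n_0} \cup K_{n_0}^{-1}(\DF(\sigma(\bn)))$, I would take $\bal \in \DF(\bn)$ and handle three cases. If $\Phi(\bal)_0 > n_0$ then $\bal \in T_{n_0}$. If $\Phi(\bal)_0 = n_0$ then $\bal \in \Delta_{n_0}$, and using $\Phi(K_{n_0}(\bal)) = \sigma(\Phi(\bal))$, the inequality $\Phi(\bal) \le \bn$ translates into $\sigma(\Phi(\bal)) \le \sigma(\bn)$, i.e., $K_{n_0}(\bal) \in \DF(\sigma(\bn))$, and hence $\bal \in K_{n_0}^{-1}(\DF(\sigma(\bn)))$. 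The case $\Phi(\bal)_0 < n_0$ is impossible, since then $\Phi(\bal)$ and $\bn$ first disagree at index~$0$ with $\Phi(\bal)_0 < n_0$, forcing $\Phi(\bal) > \bn$ in the reverse lexicographic order.

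For the converse inclusion, points of $T_{n_0}$ satisfy $\Phi(\bal) < \bn$ (by reverse lex, since the sequences disagree at index $0$ with $\Phi(\bal)_0 > n_0$), so $T_{n_0} \subset \DF(\bn)$. For $\bal \in K_{n_0}^{-1}(\DF(\sigma(\bn)))$, the point $\bal$ lies in $\overline{\Delta_{n_0}}$; if $\bal \in \Delta_{n_0}$ then the argument of the previous paragraph, run in reverse, shows $\bal \in \DF(\bn)$, while if $\bal \in \overline{\Delta_{n_0}} \setminus \Delta_{n_0}$ then $\bal$ either lies on the face $\alpha_0 = (n_0+1)\alpha_{k-1}$ with $\alpha_{k-1} > 0$ (hence in $\Delta_{n_0+1} \subset T_{n_0}$) or on $\cF$ (hence in $T_{n_0}$), and is covered by the first sentence of this paragraph.

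There is no serious obstacle; the argument is essentially a careful unpacking of the definitions of $T_{n_0}$, $K$, and the reverse lexicographic order. The only mildly subtle point is handling the boundary $\overline{\Delta_{n_0}} \setminus \Delta_{n_0}$ arising in the image of $K_{n_0}^{-1}$, which is absorbed entirely by $T_{n_0}$ (noting that the opposite face $\alpha_0 = n_0 \alpha_{k-1}$ with $\alpha_{k-1} > 0$ is already part of $\Delta_{n_0}$ itself, and so requires no separate treatment).
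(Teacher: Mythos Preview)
Your proof is correct and follows essentially the same approach as the paper's: both use the characterisation~(\ref{eq:DFn}) and split according to whether $\Phi(\bal)_0$ is greater than, equal to, or less than~$n_0$, reducing the middle case via $\Phi\circ K_{n_0} = \sigma\circ\Phi$ on~$\Delta_{n_0}$. Your explicit treatment of the boundary $\closure{\Delta_{n_0}}\setminus\Delta_{n_0} = K_{n_0}^{-1}(\cF)$, showing it is absorbed by~$T_{n_0}$, is a detail the paper leaves implicit.
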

\begin{proof}
$T_{n_0}\subset\DF(\bn)$, since $\bal\in T_{n_0}\implies\Phi(\bal)_0 >
  n_0 \implies \Phi(\bal) < \bn$. 

If $\bal\in\DF(\bn)\setminus T_{n_0}$ then $\Phi(\bal)_0 = n_0$, and
for all $\bal$ with $\Phi(\bal)_0 = n_0$ we have
\[
\bal\in\DF(\bn) \iff \Phi(\bal)\le\bn \iff \sigma(\Phi(\bal)) \le
\sigma(\bn) \iff \Phi(K_{n_0}(\bal)) \le \sigma(\bn) \iff
K_{n_0}(\bal)\in\DF(\sigma(\bn)).\]
%% \begin{eqnarray*}
%% \bal\in\DF(\bn) &\iff& \Phi(\bal)\le\bn \iff \sigma(\Phi(\bal)) \le
%% \sigma(\bn) \iff \Phi(K_{n_0}(\bal)) \le \sigma(\bn)\\ &\iff&
%% K_{n_0}(\bal)\in\DF(\sigma(\bn)).\end{eqnarray*}
\end{proof}

We now define useful sequences of subsets and supersets of~$\DF(\bn)$.

\begin{defn}[$L_{\bn,r}$, $U_{\bn,r}$]
For each $\bn\in\N^\N$ and $r\in\N$, write
\[L_{\bn,r} = \bigcup_{s=0}^r K_{n_0}^{-1}\circ \cdots \circ
K_{n_{s-1}}^{-1}(T_{n_s}) \qquad\text{and}\qquad U_{\bn,r} = L_{\bn,r}
\cup A_{\bn,r}.\]
\end{defn}

\begin{lemma}
\label{lem:bounds}
Let $\bn\in\N^\N$. Then $L_{\bn,r} \subset \DF(\bn) \subset U_{\bn,r}$
and $L_{\bn,r} \cap A_{\bn,r} = \cF_{\bn,r}$ for
all~$r\in\N$. Moreover \mbox{$\DF(\bn) = \bigcap_{r\ge 0}
  U_{\bn,r}$}.
\end{lemma}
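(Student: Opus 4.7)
The plan is to handle the three set-theoretic assertions separately and then derive the intersection formula from the upper bound. Throughout I use the lower semi-continuous extension $\Phi\colon\Delta\to\cN$ and the characterisation $\DF(\bn)=\{\bal\in\Delta\,:\,\Phi(\bal)\le\bn\}$ from~(\ref{eq:DFn}).

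\textbf{Step 1 (lower bound).} For $L_{\bn,r}\subset\DF(\bn)$, I would induct on~$r$, with base case $L_{\bn,0}=T_{n_0}\subset\DF(\bn)$ supplied by Lemma~\ref{lem:lfs-recursive}. For the inductive step, unwinding the definitions gives $L_{\bn,r}=T_{n_0}\cup K_{n_0}^{-1}(L_{\sigma(\bn),r-1})$; applying the inductive hypothesis to $\sigma(\bn)$ and then Lemma~\ref{lem:lfs-recursive} closes the argument, since $K_{n_0}^{-1}(L_{\sigma(\bn),r-1})\subset K_{n_0}^{-1}(\DF(\sigma(\bn)))\subset\DF(\bn)$.

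\textbf{Step 2 (upper bound).} For $\DF(\bn)\subset U_{\bn,r}$, I would take $\bal\in\DF(\bn)\setminus A_{\bn,r}$ and produce $\bal\in L_{\bn,r}$. If $\bal\in\cF$ then $\bal\in T_{n_0}\subset L_{\bn,r}$ directly. Otherwise $\bal\in\Delta'$, and the identity $\Upsilon_{\bn,r}(\Delta')=A_{\bn,r}\setminus\cF_{\bn,r}=\{\bbeta\in\Delta'\,:\,\Word{\Phi(\bbeta)}{r+1}=\Word{\bn}{r+1}\}$ from the preceding text forces a smallest $s\le r$ with $\Phi(\bal)_s\ne n_s$. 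Since $\Phi(\bal)\le\bn$, reverse-lexicographic order gives $\Phi(\bal)_s>n_s$, so the first $s$ applications of the $K_{n_t}$ step $\bal$ through the successive simplices $\Delta_{n_t}$ and land on a point of $T_{n_s}$, giving $\bal\in K_{n_0}^{-1}\circ\cdots\circ K_{n_{s-1}}^{-1}(T_{n_s})\subset L_{\bn,r}$.

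\textbf{Step 3 (intersection equals $\cF_{\bn,r}$).} The inclusion $\cF_{\bn,r}\subset A_{\bn,r}$ is definitional. For $\cF_{\bn,r}\subset L_{\bn,r}$, an element of $\cF_{\bn,r}\cap\Delta'$ satisfies $\Phi(\bal)_{r-i}=n_{r-i}+1$ for some $0\le i\le k-2$ by~(\ref{eq:cF-itin}), so exactly the argument of Step~2 places it in $K_{n_0}^{-1}\circ\cdots\circ K_{n_{r-i-1}}^{-1}(T_{n_{r-i}})\subset L_{\bn,r}$, while any remaining point of $\cF_{\bn,r}\cap\cF$ belongs to $T_{n_0}$. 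For the reverse inclusion I would write $\bal=\Upsilon_{\bn,r}(\bbeta)$; the case $\bbeta\in\cF$ gives $\bal\in\cF_{\bn,r}$ immediately, whereas $\bbeta\in\Delta'$ would yield $\Phi(\bal)_t=n_t$ for all $t\le r$, directly contradicting the inequality $\Phi(\bal)_s>n_s$ (for some $s\le r$) forced by membership in $L_{\bn,r}$.

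\textbf{Step 4 (final intersection).} The forward inclusion of $\DF(\bn)=\bigcap_r U_{\bn,r}$ is Step~2. For the reverse, if $\bal\notin\DF(\bn)$ then $\Phi(\bal)>\bn$, so the first disagreement index $s$ satisfies $\Phi(\bal)_s<n_s$. This prevents $\bal$ from lying in any $L_{\bn,r}$ (membership there would force $\Phi(\bal)_t>n_t$ at the first disagreement), and the remark from the excerpt that every element of $A_{\bn,r}$ has itinerary beginning $n_0\,\ldots\,n_{r-k}$ for $r\ge k$ excludes $\bal$ from $A_{\bn,r}$ once $r\ge s+k$; hence $\bal\notin U_{\bn,r}$ for $r$ large.

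\textbf{Main obstacle.} The conceptual content is modest: everything rests on translating membership in $L_{\bn,r}$, $A_{\bn,r}$, and $\cF_{\bn,r}$ into conditions on $\Phi(\bal)$. The only subtlety I anticipate is keeping the iterated applications of $K_{n_t}^{-1}$ consistent with the $\cF$/$\Delta'$ dichotomy and with the partition boundaries of $\Delta_n$, but the extension of $\Phi$ to $\cN$ (permitting the symbol $\infty$) absorbs those cases uniformly.
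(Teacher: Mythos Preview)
Your proof is correct and follows essentially the same strategy as the paper's: translate membership in $L_{\bn,r}$, $A_{\bn,r}$, and $\cF_{\bn,r}$ into conditions on the initial block $\Word{\Phi(\bal)}{r+1}$ and compare with~$\bn$. The paper differs only in organization, proving the lower bound directly (describing each piece $K_{n_0}^{-1}\circ\cdots\circ K_{n_{s-1}}^{-1}(T_{n_s})$ as $\cF_{\bn,s-1}\cup\{\bal:\Word{\Phi(\bal)}{s}=\Word{\bn}{s},\ \Phi(\bal)_s>n_s\}$) rather than by induction via Lemma~\ref{lem:lfs-recursive}, and obtaining $\bigcap_r U_{\bn,r}\subset\DF(\bn)$ from the observation that a point in every $U_{\bn,r}$ lies either in some $L_{\bn,r}$ or in $\bigcap_r A_{\bn,r}=\Phi^{-1}(\bn)$, rather than by your contrapositive.
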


\begin{proof} %Amended and checked 29-7-13
$K_{n_0}^{-1}\circ \cdots\circ K_{n_{s-1}}^{-1}(T_{n_s}) =
  K_{n_0}^{-1}\circ \cdots\circ K_{n_{s-1}}^{-1}(\cF) \cup
  K_{n_0}^{-1}\circ \cdots\circ K_{n_{s-1}}^{-1}(T_{n_s}\setminus\cF)$
  is the union of $\cF_{\bn,s-1}$ (which is contained in $\DF(\bn)$
  using~(\ref{eq:cF-itin}) and that $\cF\subset\DF(\bn)$) and of
  $\{\bal\in\Delta\,:\, \Word{\Phi(\bal)}{s} = \Word{\bn}{s} \text{
    and } \Phi(\bal)_s > n_s\}$ (which is contained in $\DF(\bn)$
  by~(\ref{eq:DFn})). This establishes the lower bound. 

Moreover,
  since $L_{\bn,r}$ contains $\{\bal\in\Delta\,:\,
  \Word{\Phi(\bal)}{s} = \Word{\bn}{s} \text{ and } \Phi(\bal)_s >
  n_s\}$ for each $s$ with $0\le s\le r$, we have that
\[
  L_{\bn,r} \,\supset\, \{\bal\in\Delta\,:\,\Word{\Phi(\bal)}{r+1} <
  \Word{\bn}{r+1}\}.
\]
Since $A_{\bn,r} = \{\bal\in\Delta\,:\,\Word{\Phi(\bal)}{r+1} =
  \Word{\bn}{r+1}\} \cup \cF_{\bn,r}$ it follows that $U_{\bn,r}$
  contains the set \mbox{$\{\bal\in\Delta\,:\, \Word{\Phi(\bal)}{r+1}
    \le \Word{\bn}{r+1}\}$}, which contains~$\DF(\bn)$, establishing
  the upper bound. Moreover if $\Word{\Phi(\bal)}{r+1} =
  \Word{\bn}{r+1}$ then $\bal\not\in L_{\bn,r}$, so that
  $L_{\bn,r}\cap A_{\bn,r} \subset \cF_{\bn,r}$; and points $\bal$ of
  $\cF_{\bn,r}$ either lie in $\cF\subset T_{n_0}\subset L_{\bn,r}$ or
  have $\Word{\Phi(\bal)}{r+1} < \Word{\bn}{r+1}$
  by~(\ref{eq:cF-itin}), so that $\cF_{\bn,r} \subset L_{\bn,r}\cap
  A_{\bn,r}$ as required.

To show that $\DF(\bn) = \bigcap_{r\ge 0} U_{\bn,r}$, observe that any
$\bal\in\bigcap_{r\ge 0} U_{\bn,r}$ either lies in $L_{\bn,r}$ for
some~$r$, and hence in $\DF(\bn)$; or lies in $\bigcap_{r\ge 0}
A_{\bn,r} = \Phi^{-1}(\bn)$, so that $\Phi(\bal) = \bn$ and
$\bal\in\DF(\bn)$. 
\end{proof}

\begin{theorem}
\label{thm:convex}
For all~$\bn\in\cN$, the digit frequency set $\DF(\bn)$ is convex.
\end{theorem}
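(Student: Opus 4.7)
The plan is to combine the identity $\DF(\bn) = \bigcap_{r \ge 0} U_{\bn,r}$ from Lemma~\ref{lem:bounds} with an induction proving convexity of each approximating set $U_{\bn,r}$. Since an intersection of convex sets is convex, this will suffice. First I would reduce to the case $\bn \in \N^\N$: by~(\ref{eq:infinity-LF}) the digit frequency set of a finite-type $\bn \ne \infty$ agrees with that of its rational partner, and the extreme cases $\bn = \overline{0}$ and $\bn = \infty$ can be treated directly (the latter giving $\DF(\infty) = \cF$, a face of $\Delta$).

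The key recursion, obtained by unwinding the definitions of $L_{\bn,r}$ and $A_{\bn,r}$, is
\[
U_{\bn,r} = T_{n_0} \cup K_{n_0}^{-1}(U_{\sigma(\bn),\,r-1}) \qquad (r \ge 1),
\]
with $U_{\bn,0} = T_{n_0} \cup K_{n_0}^{-1}(\Delta)$. I would then induct on $r$, carrying the extra invariant that $\cF \subset U_{\bn,r}$ (automatic, since $\cF \subset T_{n_0} \subset U_{\bn,r}$). The argument then reduces to the following geometric lemma: if $C \subset \Delta$ is convex and $\cF \subset C$, then $T_n \cup K_n^{-1}(C)$ is convex.

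To prove that lemma, observe that both pieces are convex: $T_n$ because it is cut from $\Delta$ by a single linear inequality, and $K_n^{-1}(C)$ because the explicit formula~(\ref{eq:KnI}) defines a projective homeomorphism $\Delta \to \closure{\Delta_n}$ whose denominator is strictly positive on $\Delta$, and therefore sends line segments to line segments. Let $H = \{\bal\in\Delta\,:\,\alpha_0 = (n+1)\alpha_{k-1}\}$; a direct computation from~(\ref{eq:KnI}) gives $K_n^{-1}(\cF) = H \cap \closure{\Delta_n}$, which is precisely the facet along which $T_n$ and $\closure{\Delta_n}$ meet. The hypothesis $\cF \subset C$ then places this entire shared facet inside $K_n^{-1}(C)$. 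Given $p \in T_n$ and $q \in K_n^{-1}(C) \subset \closure{\Delta_n}$ on opposite sides of $H$, the segment $pq$ crosses $H$ at some point $r \in H \cap \closure{\Delta_n} \subset K_n^{-1}(C)$, so $pr \subset T_n$ and $rq \subset K_n^{-1}(C)$ by their respective convexities, whence the entire segment lies in the union.

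The main obstacle I expect is the geometric verification that $K_n^{-1}$ transports convex sets faithfully and the identification $K_n^{-1}(\cF) = H \cap \closure{\Delta_n}$ of the shared facet; neither step is deep, but both are needed to make the ``cross $H$ once, stay in each piece'' argument rigorous, and they have to be handled with the projective formula~(\ref{eq:KnI}) in hand rather than by general abstract nonsense.
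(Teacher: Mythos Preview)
Your proposal is correct and follows essentially the same route as the paper: reduce to $\bn\in\N^\N$, use Lemma~\ref{lem:bounds} to write $\DF(\bn)=\bigcap_{r\ge0}U_{\bn,r}$, establish the recursion $U_{\bn,r}=T_{n_0}\cup K_{n_0}^{-1}(U_{\sigma(\bn),r-1})$, and induct on~$r$ using the geometric fact that gluing $T_{n_0}$ to a convex subset of $\closure{\Delta_{n_0}}$ containing the shared facet $K_{n_0}^{-1}(\cF)=\cF_{\bn,0}$ yields a convex set. The paper obtains $\cF\subset U_{\sigma(\bn),r-1}$ via the inclusion $U_{\sigma(\bn),r-1}\supset\DF(\sigma(\bn))\supset\cF$ rather than carrying it as an explicit invariant, but this is cosmetic.
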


\begin{proof}%Checked 29-7-13
By Lemma~\ref{lem:bounds} we need only
show that $U_{\bn,r}$ is convex for all $\bn\in\N^\N$ and
all~$r\in\N$, which we do by induction on~$r$. In the base case~$r=0$,
we have $U_{\bn,0} = T_{n_0} \cup K_{n_0}^{-1}(\Delta) = T_{n_0-1}$,
which is convex (we write $T_{-1} = \Delta$ for convenience).

For $r>0$, we have $L_{\bn,r} = T_{n_0} \cup
K_{n_0}^{-1}(L_{\sigma(\bn), r-1})$ and $A_{\bn,r} =
  K_{n_0}^{-1}(A_{\sigma(\bn), r-1})$, so that
\[U_{\bn,r} = T_{n_0} \cup K_{n_0}^{-1}(U_{\sigma(\bn), r-1}).\]
Now $U_{\sigma(\bn), r-1}$ is convex by the inductive hypothesis, and
hence so is $K_{n_0}^{-1}(U_{\sigma(\bn), r-1})$. Moreover,
\[\closure{\Delta_{n_0}}\supset K_{n_0}^{-1}(U_{\sigma(\bn),r-1}) \supset
K_{n_0}^{-1}(\DF(\sigma(\bn))) \supset K_{n_0}^{-1}(\cF) =
\cF_{\bn,0},\] and $\cF_{\bn,0}$ is also a face of~$T_{n_0}$. That is, the
simplex~$T_{n_0-1}$ is the union of the two simplices $T_{n_0}$ and
$\closure{\Delta_{n_0}}$ whose intersection is the common
face~$\cF_{\bn,0}$; and $U_{\bn,r}$ is the union of $T_{n_0}$ and the
convex subset $K_{n_0}^{-1}(U_{\sigma(\bn),r-1})$ of
$\closure{\Delta_{n_0}}$ which contains $\cF_{\bn,0}$. It follows that
$U_{\bn,r}$ is convex as required, since any line segment joining a
point of $T_{n_0}$ to a point of $K_{n_0}^{-1}(U_{\sigma(\bn),r-1})$
passes through $\cF_{\bn,0}$, and is therefore the join of a segment
in $T_{n_0}$ and a segment in $K_{n_0}^{-1}(U_{\sigma(\bn),r-1})$.
\end{proof}

\begin{remark}
\label{rmk:no-local-max}
As a consequence, we obtain the following property of the itinerary
map~$\Phi$: if $\ell\subset\Delta$ is a line segment with endpoints
$\bal$ and $\bbeta$, and $\bgamma\in\ell$, then $\Phi(\bgamma) \le
\max(\Phi(\bal), \Phi(\bbeta))$. For if not then, since
$\max(\Phi(\bal),\Phi(\bbeta))$ and $\Phi(\bgamma)$ are not the
elements of a rational-finite pair (the only finite type element in
the image of~$\Phi$ is $\infty$), we can choose $\bn\in\N^\N$ with
\mbox{$\max(\Phi(\bal),\Phi(\bbeta)) < \bn < \Phi(\bgamma)$}. Then
$\bal,\bbeta\in \DF(\bn)$ but $\bgamma\not\in\DF(\bn)$, contradicting
the convexity of $\DF(\bn)$.
\end{remark}

\subsection{Extreme points of the digit frequency set}

By Theorem~\ref{thm:convex}, $\DF(\bn)$ is determined by the set
 of its extreme points, which we now describe. Observe
first that since $\cF\subset\DF(\bn)$, the vertices
$\be_0,\be_1,\ldots,\be_{k-2}$ of~$\cF$ are extreme points of
$\DF(\bn)$ for all $\bn$. 

\begin{defns}[$\EP(\bn)$, $\E(\bn)$, non-trivial extreme points]
Let $\EP(\bn)$ denote the set of extreme points of $\DF(\bn)$, and
\[\E(\bn) = \EP(\bn) \setminus \{\be_0,\be_1,\ldots,\be_{k-2}\}\]
the set of {\em non-trivial} extreme points.
\end{defns}

The next lemma translates the inductive expression for $\DF(\bn)$
given by Lemma~\ref{lem:lfs-recursive} into an analogous one for
$\E(\bn)$. Notice that in the case~$k=2$ the condition ``$n_r=0$ for
$1\le r\le k-2$'' is always true.

\begin{lemma}
\label{lem:extreme-points-recursive}
Let~$\bn\in\N^\N$. Then
\[
\E(\bn) = 
\begin{cases}
K_{n_0}^{-1}(\E(\sigma(\bn))) & \text{ if }n_r=0 \text{ for }1\le
r \le k-2,\\
K_{n_0}^{-1}(\E(\sigma(\bn)) \,\cup\, \{\be_{k-2}\}) & \text{ otherwise.}
\end{cases}
\]
\end{lemma}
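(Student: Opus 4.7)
The plan is to use Lemma~\ref{lem:lfs-recursive} to write $\DF(\bn)=T_{n_0}\cup C$ with $C=K_{n_0}^{-1}(\DF(\sigma(\bn)))$, and then to identify the extreme points of this union of two convex sets. The simplex $T_{n_0}$ has the $k$ vertices $\be_0,\be_1,\ldots,\be_{k-2}$ together with $v_{n_0}:=K_{n_0}^{-1}(\be_{k-2})=((n_0+1)/(n_0+2),0,\ldots,0,1/(n_0+2))$, and meets $C$ precisely along the $(k-2)$-simplex $\cF_{\bn,0}$, whose vertices are $\be_1,\ldots,\be_{k-2},v_{n_0}$. Since $K_{n_0}^{-1}\colon\Delta\to\closure{\Delta_{n_0}}$ is a projective homeomorphism, the extreme points of $C$ are precisely the $K_{n_0}^{-1}$-images of the extreme points of $\DF(\sigma(\bn))$. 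Because any extreme point of a convex union must be extreme in one of the summands, and because the trivial extreme points $\be_0,\ldots,\be_{k-2}$ always occur, $\E(\bn)$ is contained in $\{v_{n_0}\}\cup K_{n_0}^{-1}(\E(\sigma(\bn)))$.

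I next argue that every $K_{n_0}^{-1}(\bal)$ with $\bal\in\E(\sigma(\bn))$ is a genuine non-trivial extreme point of $\DF(\bn)$. Non-triviality holds because the only extreme points of $\DF(\sigma(\bn))$ lying in $\cF$ are the vertices of $\cF$ (which are trivial), so $\bal\notin\cF$ and hence $K_{n_0}^{-1}(\bal)\in C\setminus T_{n_0}$. Suppose $K_{n_0}^{-1}(\bal)=(1-s)\bp+s\bq$ with $s\in(0,1)$ and distinct $\bp,\bq\in\DF(\bn)$. The cases in which $\bp,\bq$ both lie in $T_{n_0}$ or both in $C$ are ruled out by $K_{n_0}^{-1}(\bal)\notin T_{n_0}$ and by the extremality of $K_{n_0}^{-1}(\bal)$ in $C$ respectively. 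Otherwise, relabelling if necessary, $\bp\in T_{n_0}\setminus C$ and $\bq\in C$; then $[\bp,\bq]$ must cross the common face $\cF_{\bn,0}$ at some $\bp'$ lying strictly between $\bp$ and $K_{n_0}^{-1}(\bal)$ (since $T_{n_0}$ and $\closure{\Delta_{n_0}}$ intersect only in $\cF_{\bn,0}$), and the sub-segment $[\bp',\bq]\subset C$ contains $K_{n_0}^{-1}(\bal)$ in its interior. Applying the projective bijection $K_{n_0}$ yields a non-degenerate line segment in $\DF(\sigma(\bn))$ with endpoints $K_{n_0}(\bp')\in\cF$ and $K_{n_0}(\bq)$ and passing through $\bal$, contradicting the extremality of $\bal$.

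Finally I determine when $v_{n_0}$ itself is extreme in $\DF(\bn)$. If $v_{n_0}=(1-s)\bp+s\bq$ with $s\in(0,1)$ and $\bp\ne\bq$ in $\DF(\bn)$, the vanishing of the coordinates $\alpha_1,\ldots,\alpha_{k-2}$ at $v_{n_0}$ forces them to vanish at $\bp$ and $\bq$, so both lie on the edge $\be_0\be_{k-1}$. Parametrizing this edge by $\bbeta_t=(1-t,0,\ldots,0,t)$ (so that $v_{n_0}=\bbeta_{1/(n_0+2)}$), a short calculation using that $K_0$ shifts the nonzero indices of a point on $\be_j\be_{k-1}$ downwards by one shows that for $t\in(1/(n_0+2),1/(n_0+1))$ the itinerary $\Phi(\bbeta_t)$ begins with $n_0$, then $k-2$ zeros, and then $M(t):=\lfloor(1-t(n_0+1))/((n_0+2)t-1)\rfloor$, with $M(t)\to\infty$ as $t\to(1/(n_0+2))^+$. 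If some $n_r>0$ with $1\le r\le k-2$, the reverse-lexicographic comparison already fails at the smallest such $r$ (since $\Phi(\bbeta_t)_r=0<n_r$), so no such $\bbeta_t$ lies in $\DF(\bn)$ (a brief check disposes of $t\ge 1/(n_0+1)$, since then $\Phi(\bbeta_t)_0<n_0$ except at $w_{n_0}$, which lies in $\DF(\bn)$ only when $\bn=n_0\overline{0}$), and $v_{n_0}$ is extreme. If instead all these $n_r$ vanish, choosing $t$ near $1/(n_0+2)$ makes $M(t)>n_{k-1}$, whence $\Phi(\bbeta_t)<\bn$ in reverse lexicographic order, so $\bbeta_t\in\DF(\bn)\setminus\{v_{n_0}\}$ and $v_{n_0}$ is not extreme. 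Combined with the previous paragraph, this yields the two cases of the Lemma. The main obstacle is this final analysis at $v_{n_0}$, particularly the reduction to the edge $\be_0\be_{k-1}$ and the explicit computation of the initial segment of $\Phi(\bbeta_t)$ via iterated $K_0$.
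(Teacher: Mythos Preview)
Your argument is correct and follows essentially the same route as the paper: decompose $\DF(\bn)=T_{n_0}\cup K_{n_0}^{-1}(\DF(\sigma(\bn)))$ along the common face $\cF_{\bn,0}$, deduce that $\E(\bn)\subset K_{n_0}^{-1}(\E(\sigma(\bn)))\cup\{v_{n_0}\}$, and then decide the status of $v_{n_0}$ by reducing to the edge $\be_0\be_{k-1}$ and comparing itineraries. Your second paragraph is more explicit than the paper about why each $K_{n_0}^{-1}(\bal)$ with $\bal\in\E(\sigma(\bn))$ remains extreme, and in the third paragraph you compute $\Phi(\bbeta_t)$ directly whereas the paper first applies $K_{n_0}$ and works with $(0,\ldots,0,1-z,z)\in\DF(\sigma(\bn))$; these are equivalent computations. (One minor slip: the symbol $w_{n_0}$ in your parenthetical is undefined --- you presumably mean the boundary point $\bbeta_{1/(n_0+1)}$, whose itinerary is $n_0\,\overline{0}$.)
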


\begin{proof} %Checked carefully 29-7-13
By Lemma~\ref{lem:lfs-recursive}, $\DF(\bn)$ is the union of the
simplex $T_{n_0}$ and the image of $\DF(\sigma(\bn))$ under the
projective homeomorphism~$K_{n_0}^{-1}$. Since the intersection of
these sets is exactly~$\cF_{\bn,0}$, which has vertices
$\be_1,\ldots,\be_{k-2}$ and $K_{n_0}^{-1}(\be_{k-2})$, the non-trivial
extreme points of $\DF(\bn)$ are the images under $K_{n_0}^{-1}$ of
the non-trivial extreme points of $\DF(\sigma(\bn))$, together perhaps
with the point $K_{n_0}^{-1}(\be_{k-2})$. 

Now if $K_{n_0}^{-1}(\be_{k-2}) = ((n_0+1)/(n_0+2), 0, \ldots, 0,
1/(n_0+2))$ is on a line segment joining two other points of
$\DF(\bn)$, then these points must have coordinates $(1-x,0,\ldots,0,
x)$ and $(1-y,0,\ldots,0, y)$ with $x < 1/(n_0+2) < y$. Since
\mbox{$\be_0 = (1,0,\ldots,0,0)\in \DF(\bn)$}, it follows that
$K_{n_0}^{-1}(\be_{k-2})$ is {\em not} an extreme point of $\DF(\bn)$
if and only if $K_{n_0}^{-1}(\DF(\sigma(\bn)))$ contains a point of
the form $(1-y,0,\ldots,0,y)$ for some $y$ with $1/(n_0+2) < y \le
1/(n_0+1)$ (the latter inequality coming from the fact that
\mbox{$(1-y,0,\ldots,0,y)$} can only be in $K_{n_0}^{-1}(\Delta)$ if
$(1-y)/y \ge n_0$).

Since $K_{n_0}(1-y,0,\ldots,0,y) = \left(0,\ldots,0, \frac{1}{y} - (n_0+1),
(n_0+2)- \frac{1}{y}\right)$ it follows that
\begin{eqnarray*}
K_{n_0}^{-1}(\be_{k-2})\not\in\E(\bn) &\iff& 
\exists z\in (0,1],\,\,(0,0,\ldots,0,1-z,z) \in\DF(\sigma(\bn))\\
&\iff&
\exists z\in (0,1],\,\, \Phi(0,0,\ldots,0,1-z,z) \le \sigma(\bn).
\end{eqnarray*}
Now $\Phi(0,0,\ldots,0,1-z,z)_r = 0$ for $0\le r\le k-3$, so that
\[
\exists z\in (0,1],\,\, \Phi(0,0,\ldots,0,1-z,z) \le \sigma(\bn)
  \implies n_r= 0 \text{ for }1\le r\le k-2.
\]
 Conversely, if $n_r=0$ for $1\le r\le k-2$, then let $z=1/(n_{k-1}+2)
 \in(0,1]$. We have
\[K^{k-2}(0,0,\ldots,0,1-z,z) =
(1-z,0,0,\ldots,0,z) \in\Delta_{\lfloor(1-z)/z\rfloor} =
\Delta_{n_{k-1}+1},\]
so that $\Phi(0,0,\ldots,0,1-z,z)\le \sigma(\bn)$.

Therefore $K_{n_0}^{-1}(\be_{k-2})\not\in\E(\bn)$ if and only if $n_r=0$
for $1\le r\le k-2$, as required.
\end{proof}

The following theorem describes the set of non-trivial extreme points
of $\DF(\bn)$ non-inductively: this set consists of the points
$\Upsilon_{\bn,s}(\be_{k-2})$ for those indices~$s$ which are not
followed by $k-2$ zeroes in the itinerary~$\bn$, together with a
subset of the vertices of $\Phi^{-1}(\bn)$. In the regular case, this
subset is exactly the one-point set $\Phi^{-1}(\bn)$; while in the
exceptional case, since $\Phi^{-1}(\bn)$ is a $j$-simplex for some
$j\le k-2$, it contains at most $k-1$ points.

\begin{defn}[$\FE(\bn)$]
For each $\bn\in\N^\N$, write
\[\FE(\bn) = \{K_{n_0}^{-1} \circ \cdots \circ K_{n_s}^{-1}(\be_{k-2})
\,:\, s\ge 0,\,\, n_{s+t}\not=0 \text{ for some }1\le t\le k-2\}.\]
\end{defn}

\begin{remark}
\label{rmk:unique-description}
Each point of $\FE(\bn)$ is listed once only: that
is, whenever $s<t$ we have $K_{n_0}^{-1} \circ \cdots \circ
K_{n_s}^{-1}(\be_{k-2}) \not= K_{n_0}^{-1} \circ \cdots \circ
K_{n_t}^{-1}(\be_{k-2})$. For otherwise we would have
  $K_{n_{s+1}}^{-1}\circ \cdots \circ K_{n_t}^{-1}(\be_{k-2}) =
  \be_{k-2}$; but it follows immediately from~(\ref{eq:KnI}) that
  $(K_{n_{s+1}}^{-1}\circ \cdots \circ
  K_{n_t}^{-1}(\be_{k-2}))_{k-1}>0$, which is a contradiction.
\end{remark}

\begin{theorem}
\label{thm:extreme-points}
Let~$\bn\in\N^\N$.  Then
\[\FE(\bn) \,\,\subset\,\, \E(\bn)\,\, \subset\,\, \left(\FE(\bn)  \cup
\Ve\Phi^{-1}(\bn)\right).\]
Moreover, 
\begin{enumerate}[a)]
\item $\E(\bn)$ is finite (i.e. $\DF(\bn)$ is a polytope) if and only
  if $k=2$ or $\bn$ is of rational type.
\item If $\bn$ is regular, so that $\Phi^{-1}(\bn)=\{\bal\}$ for some
  $\bal$, then $\E(\bn) = \FE(\bn) \,\cup\,\{\bal\}$, and $\bal$ is an
  accumulation point of $\E(\bn)$ if $\E(\bn)$ is infinite.
\item The points of $\FE(\bn)$ are rational, while those of
  $\Ve\Phi^{-1}(\bn)$ are non-rational unless $\bn$ is of rational type.
\end{enumerate}
\end{theorem}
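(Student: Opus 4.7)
The plan is to prove the two containments by iterating Lemma~\ref{lem:extreme-points-recursive} and to deduce (a)--(c) as refinements. For $\FE(\bn)\subset\E(\bn)$: applying the recursion at each shift $\sigma^s(\bn)$ and pulling back by $\Upsilon_{\bn,s-1}$, the point $\Upsilon_{\bn,s}(\be_{k-2})$ is adjoined to $\E(\bn)$ exactly when some $n_{s+t}\ne 0$ for $1\le t\le k-2$, which matches the definition of $\FE(\bn)$ verbatim. These points are pairwise distinct by Remark~\ref{rmk:unique-description}.

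For the reverse containment $\E(\bn)\subset\FE(\bn)\cup\Ve\Phi^{-1}(\bn)$: given $\bal\in\E(\bn)$, the recursion produces a sequence $\bal=\bal_0,\bal_1,\ldots$ with $\bal_s\in\E(\sigma^s(\bn))\cup\{\be_{k-2}\}$ and $\bal=\Upsilon_{\bn,s-1}(\bal_s)$. Either at some stage $\bal_s=\be_{k-2}$ (so $\bal\in\FE(\bn)$), or the sequence continues forever, in which case $\bal\in\Upsilon_{\bn,s}(\Delta)=A_{\bn,s}$ for every $s$, forcing $\bal\in\bigcap_s A_{\bn,s}=\Phi^{-1}(\bn)$. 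That $\bal$ must be a \emph{vertex} of the simplex $\Phi^{-1}(\bn)$ follows from $\Phi^{-1}(\bn)\subset\DF(\bn)$: any non-vertex of $\Phi^{-1}(\bn)$ lies on a proper segment with endpoints in $\Phi^{-1}(\bn)\subset\DF(\bn)$, so cannot be extreme in $\DF(\bn)$.

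Parts (a) and (c) are now direct. For (a), $\FE(\bn)$ is infinite precisely when $k\ge 3$ and $\bn$ is not of rational type; in the complementary cases both $\FE(\bn)$ and $\Phi^{-1}(\bn)$ (a single point by Facts~\ref{facts:infifacts}d) when $k=2$ or Facts~\ref{facts:infifacts}c) when $\bn$ is rational) are finite. For (c), rationality of $\FE(\bn)$ follows from the rational-coefficient projective nature of each $K_n^{-1}$ and the rationality of $\be_{k-2}$, while non-rationality of the vertices of $\Phi^{-1}(\bn)$ in the non-rational case is the contrapositive of Facts~\ref{facts:infifacts}c). The main obstacle is part (b): when $\bn$ is regular with $\Phi^{-1}(\bn)=\{\bal\}$, we need $\bal\in\E(\bn)$. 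Here I would use Remark~\ref{rmk:no-local-max}: if $\bal=(1-t)\bbeta+t\bgamma$ with $\bbeta,\bgamma\in\DF(\bn)$ and $t\in(0,1)$, then $\bn=\Phi(\bal)\le\max(\Phi(\bbeta),\Phi(\bgamma))\le\bn$, so at least one endpoint lies in $\Phi^{-1}(\bn)=\{\bal\}$, and simple algebra forces the other to equal $\bal$ as well. The accumulation statement then follows since the points $\Upsilon_{\bn,s}(\be_{k-2})\in\FE(\bn)$ are vertices of the simplices $A_{\bn,s}$, which shrink Hausdorff to $\{\bal\}$ by the results of Section~\ref{sec:converge}.
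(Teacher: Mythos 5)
Your proposal is correct and follows essentially the same route as the paper: unwinding Lemma~\ref{lem:extreme-points-recursive} to get the two containments (with Remark~\ref{rmk:unique-description} for distinctness and $\bigcap_r A_{\bn,r}=\Phi^{-1}(\bn)$ for the residual points), Remark~\ref{rmk:no-local-max} for part~(b), and the convergence $A_{\bn,s}\to\Phi^{-1}(\bn)$ for the accumulation statement. No substantive differences from the paper's argument.
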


\begin{remarks}\mbox{}
\label{rmk:extreme-pt-thm}
\begin{enumerate}[a)]
\item When~$k=2$, $\FE(\bn)$ is always empty and $\bn$ is always
  regular, so there is one non-trivial extreme point, the point of
  $\Phi^{-1}(\bn)$, together with the trivial extreme point~$(1,0)$. 
\item Let $n_r = 2^{2^{3r}}$. Then for each $k\ge
  3$, the set $\E(\bn)$ has $k-1$ accumulation points. For it is shown
  in~\cite{lex} that $\Phi^{-1}(\bn)$ is a simplex of
  dimension~$k-2$. The easy part of the proof of Borovikov's theorem
  (see~\cite{Winkler} Section~3.3 for a proof in English) shows that
  each of the $k-1$ vertices of $\Phi^{-1}(\bn)$ is an accumulation
  point of vertices of the simplices $A_{\bn,r}$ -- and each of these
  vertices belongs to $\E(\bn)$, since $n_r>0$ for all~$r$.
\item The authors do not know whether or not $\E(\bn)\not=\FE(\bn)
  \cup \Ve\Phi^{-1}(\bn)$ is possible in the case when $\bn$ is
  exceptional; nor whether or not it is possible for the points of
  $\E(\bn)$ to accumulate at a point of $\Phi^{-1}(\bn)$ other than a
  vertex (they can't accumulate at a point not in $\Phi^{-1}(\bn)$,
  since $K_{n_0}^{-1} \circ \cdots \circ K_{n_s}^{-1}(\be_{k-2}) \in
  A_{\bn,s} \to \Phi^{-1}(\bn)$ as $s\to\infty$).
\end{enumerate}
\end{remarks}

\begin{proof} %Checked 29-7-13
A straightforward induction on~$R$ using
Lemma~\ref{lem:extreme-points-recursive} gives that, for each~$R\ge1$,

\begin{eqnarray*}
\E(\bn) &=& \{K_{n_0}^{-1} \circ \cdots \circ K_{n_s}^{-1}(\be_{k-2})
\,:\, 0\le s < R,\, n_{s+t}\not=0 \text{ for some }1\le t\le k-2\} \\
&& \qquad \cup \quad K_{n_0}^{-1}\circ \cdots \circ
K_{n_{R-1}}^{-1}(\E(\sigma^R(\bn)). 
\end{eqnarray*}
Now $K_{n_0}^{-1}\circ \cdots \circ
K_{n_{R-1}}^{-1}(\E(\sigma^R(\bn))\subset A_{\bn,R-1}$, and
$\bigcap_{r\ge 0} A_{\bn,r} = \Phi^{-1}(\bn)$. Therefore 
\[
\E(\bn) \supset\FE(\bn) =  \{K_{n_0}^{-1} \circ \cdots \circ K_{n_s}^{-1}(\be_{k-2})
\,:\, s\ge 0,\, n_{s+t}\not=0 \text{ for some }1\le t\le k-2\},
\]
and any elements of $\E(\bn)$ not in $\FE(\bn)$ are contained in
$\Phi^{-1}(\bn)$. Since $\Phi^{-1}(\bn)$ is a simplex which is
contained in $\DF(\bn)$, any remaining extreme points of $\DF(\bn)$
must be vertices of~$\Phi^{-1}(\bn)$. 

\medskip

We now prove the remaining statements of the theorem.
\begin{enumerate}[a)]
\item By Remark~\ref{rmk:unique-description}, $\E(\bn)$ is finite if
  and only if there are only finitely many~$s\ge 0$ with the property
  that $n_{s+t}\not=0$ for some $1\le t\le k-2$. This is always the
  case when~$k=2$, but for $k\ge 3$ is true if and only if $n_s = 0$
  for all sufficiently large~$s$; i.e., if and only if $\bn$ is of
  rational type.
\item By Remark~\ref{rmk:no-local-max}, if $\bn$ is regular then it is
  not possible for the unique point of $\Phi^{-1}(\bn)$ (which has
  itinerary~$\bn$) to belong to a line segment whose endpoints are in
  $\DF(\bn) \setminus \Phi^{-1}(\bn)$ (and so have itinerary less
  than~$\bn$). This point is therefore an extreme point of $\DF(\bn)$
  as required. Moreover, $\Phi^{-1}(\bn)$ is necessarily an
  accumulation point of~$\E(\bn)$ when $\E(\bn)$ is infinite, since
  $K_{n_0}^{-1}\circ\cdots\circ K_{n_s}^{-1}(\be_{k-2})\in A_{\bn,s}
  \to \Phi^{-1}(\bn)$ as $s\to\infty$.
\item The points of $\FE(\bn)$ are clearly rational. No point with
  itinerary~$\bn$ can be rational unless $\bn$ is of rational type.

\end{enumerate}

\end{proof}

\begin{corollary}
\label{cor:almost-injective}
Let~$\bm,\bn\in\cN$ with $\bm\not=\bn$. Then $\DF(\bm) = \DF(\bn)$ if and
only if $\bm$ and~$\bn$ form a rational-finite pair.
\end{corollary}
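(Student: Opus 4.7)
The plan is to recast the question via~(\ref{eq:DFn}): if $\bm<\bn$ then $\DF(\bm)=\DF(\bn)$ if and only if the half-open interval $(\bm,\bn]$ in~$\cN$ contains no element of the image of $\Phi\colon\Delta\to\cN$. The ``if'' direction then reduces to the special case of rational-finite pairs already recorded in equation~(\ref{eq:infinity-LF}), so the real work is in the converse.

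For the converse, assume $\bm<\bn$ and $\DF(\bm)=\DF(\bn)$. First I would pin down the image of~$\Phi$ exactly. Since $\Phi\colon\Delta'\to\N^\N$ is surjective by Facts~\ref{facts:infifacts}d), and the extension defined at the end of Section~\ref{sec:infimax} sends the remaining face $\cF$ to $\infty$, the image is exactly $\N^\N\cup\{\infty\}$. Because $\bn\in(\bm,\bn]$ must avoid this image, $\bn$ is a finite type element of length at least two; that is, $\bn=n_0\ldots n_R\infty$ for some $R\ge 0$.

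Next I would invoke Lemma~\ref{lem:gaps} to identify the immediate predecessor of~$\bn$ in~$\cN$ as $\bn'=n_0\ldots n_{R-1}(n_R+1)\overline{0}$, which is of rational type. By Facts~\ref{facts:infifacts}c), rational-type itineraries are attained by~$\Phi$, so $\bn'\in\Phi(\Delta)$. Were $\bm<\bn'$ we would have $\bn'\in(\bm,\bn]$, contradicting the emptiness of that intersection with $\Phi(\Delta)$. Hence $\bm\ge\bn'$, and since by Lemma~\ref{lem:gaps} no element of~$\cN$ lies strictly between $\bn'$ and $\bn$, we conclude $\bm=\bn'$, so $\{\bm,\bn\}$ is a rational-finite pair.

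I expect no substantive obstacle here: the structural results about~$\cN$ established in Section~\ref{sec:infimax}, especially Lemma~\ref{lem:gaps} on consecutive elements and Facts~\ref{facts:infifacts}c),~d) on the image of~$\Phi$, do all the heavy lifting. The only point worth flagging is the verification that $\Phi(\Delta)$ is exactly $\N^\N\cup\{\infty\}$, since it is precisely the complement of this image inside $\cN$ — namely the finite type elements other than $\infty$ — that is responsible for each coincidence $\DF(\bm)=\DF(\bn)$.
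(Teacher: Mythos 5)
Your proposal is correct, and for the harder (``only if'') direction it takes a genuinely different route from the paper. The paper argues at the level of the digit frequency sets themselves: if $\bm<\bn$ do not form a rational-finite pair, then there are both rational-type and non-rational-type elements of $\cN$ between them, and since $\DF$ is increasing, equality $\DF(\bm)=\DF(\bn)$ would force a polytope to coincide with a non-polytope, contradicting Theorem~\ref{thm:extreme-points}a) (or, for $k=2$, part c)). You instead argue at the level of itineraries: by~(\ref{eq:DFn}) and monotonicity, $\DF(\bm)=\DF(\bn)$ for $\bm<\bn$ exactly when $(\bm,\bn]$ misses the image of the extended $\Phi$, which by Facts~\ref{facts:infifacts}d) and the definition of the extension is precisely $\N^\N\cup\{\infty\}$; hence $\bn$ must be of finite type with non-trivial initial word, its immediate predecessor $n_0\ldots n_{R-1}(n_R+1)\overline{0}$ given by Lemma~\ref{lem:gaps} is rational-type and so lies in the image, and consecutiveness forces $\bm$ to equal that predecessor. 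Your version is arguably more elementary — it bypasses the extreme-point structure theorem entirely, needing only the surjectivity of $\Phi$ and the gap structure of $\cN$ — and it isolates exactly which elements of $\cN$ are responsible for the coincidences (the finite-type elements outside $\Phi(\Delta)$). The paper's version buys a slightly different insight, namely that the two sets are distinguished by a concrete geometric invariant (being or not being a polytope). Both arguments handle the easy direction identically via~(\ref{eq:infinity-LF}). The only point you should state explicitly rather than leave implicit is that $\bn\neq\infty$ because $\infty=\Phi(\cF)$ does lie in the image; your phrase ``of length at least two'' covers this, but it deserves a sentence.
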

\begin{proof}
If $\bm$ and $\bn$ form a rational-finite pair then
$\DF(\bm)=\DF(\bn)$ by~(\ref{eq:infinity-LF}).  Otherwise there are
both rational and non-rational elements of~$\cN$ between $\bm$ and
$\bn$. Since $\DF$ is increasing by definition, it follows from
Theorem~\ref{thm:extreme-points}a) (or, in the case~$k=2$, from
Theorem~\ref{thm:extreme-points}c)) that $\DF(\bm)\not=\DF(\bn)$ as
required.
\end{proof}

\begin{examples} %Details checked 29-7-13
\label{ex:ep}
\begin{enumerate}[a)]
\item Let $\bn = 2\,1\,0\,1\,\overline{0}$. In the case $k=3$, the
  digit frequency set $\DF(\bn)$ has two trivial extreme points,
  $(1,0,0)$ and $(0,1,0)$, together with three non-trivial extreme
  points
\begin{eqnarray*}
K_2^{-1}(0,1,0) &=& (3/4,\,0,\,1/4),\\
K_2^{-1}K_1^{-1}K_0^{-1}(0,1,0) &=& (5/8,\,1/8,\,2/8), \quad\text{ and}\\
\Phi^{-1}(\bn) = K_2^{-1}K_1^{-1}K_0^{-1}K_1^{-1}(0,0,1) &=& (4/9,\,3/9,\,2/9).
\end{eqnarray*}
Hence $\DF(\bn)$ is a pentagon (Figure~\ref{fig:short}). The point
$K_2^{-1}K_1^{-1}(0,1,0) = (2/5,\,2/5,\,1/5)$ is not an extreme point
since~$n_2=0$ --- and indeed it can be checked that it lies on the line
segment joining the extreme points $(0,1,0)$ and $(4/9,\,3/9,\,2/9)$.

\medskip

The same itinerary~$\bn$ in the case~$k=4$ gives
$|\EP(\bn)| = 7$: in addition to the three trivial extreme points
$(1,0,0,0)$, $(0,1,0,0)$, and $(0,0,1,0)$, we have
\begin{eqnarray*}
K_2^{-1}(0,0,1,0) &=& (3/4,\,0,\,0,\,1/4),\\
K_2^{-1}K_1^{-1}(0,0,1,0) &=& (2/5,\,2/5,\,0,\,1/5),\\
K_2^{-1}K_1^{-1}K_0^{-1}(0,0,1,0) &=& (2/5,\,1/5,\,1/5,\,1/5), \quad\text{
  and}\\
\Phi^{-1}(\bn) = K_2^{-1}K_1^{-1}K_0^{-1}K_1^{-1}(0,0,0,1) &=&
(5/8,\,1/8,\, 0,\, 2/8).\\
\end{eqnarray*}
Thus $\DF(\bn)$ is a polyhedron with 7 vertices. In this case we do include
the point $K_2^{-1}K_1^{-1}(0,0,1,0)$, since it is not true that both
$n_2=0$ and $n_3=0$.

\item When $k=3$ and $\bn$ is exceptional, the boundary of $\DF(\bn)$
  contains the exceptional interval, both ends of which are
  accumulations of extreme points. Figure~\ref{fig:cubes}, drawn in
  the $(\alpha_0,\alpha_2)$-plane, illustrates
  the situation: it depicts $\DF(\bn)$, and its extreme points, in the
  case where $n_r = r^3$ for $0\le r\le 25$, and $n_r=0$ for $r>
  25$. Notice that, if~$\bN\in\N^\N$ is given by $N_r=r^3$ for
  all~$r$, then the first $25$ extreme points in $\FE(\bN)$ provided
  by Theorem~\ref{thm:extreme-points} are exactly the points of $\FE(\bn)$.

It is not known whether or not~$\bN$ is exceptional, although
experimental evidence suggests that it is. However, itineraries which
are known to be exceptional, such as $n_r=2^{2^{3r}}$, grow too
quickly for it to be feasible to produce plots similar to
Figure~\ref{fig:cubes}.

\begin{figure}[htbp]
\begin{center}
\includegraphics[width=0.5\textwidth]{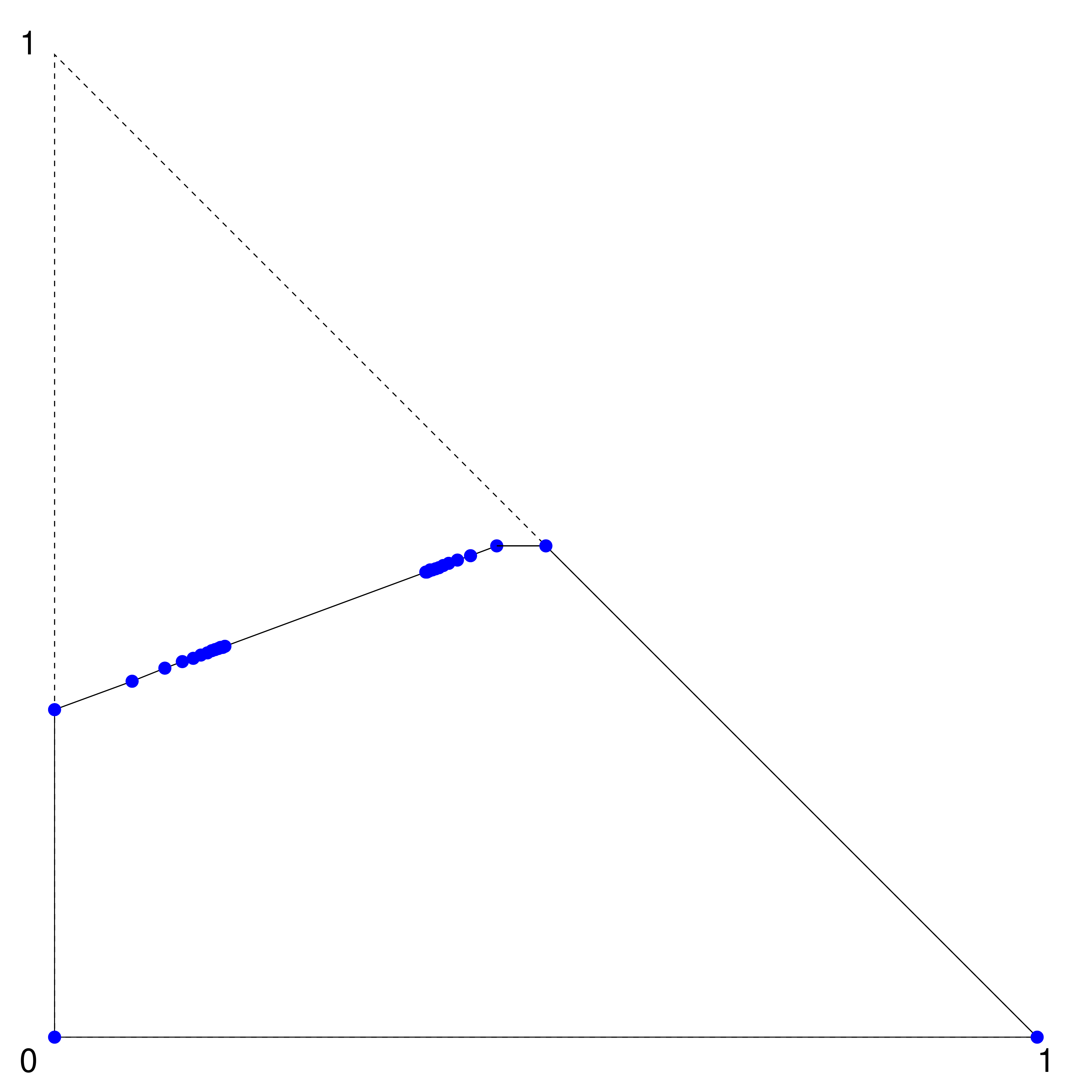}
\caption{An approximation to $\DF(\bn)$ for exceptional~$\bn$ in the case~$k=3$}
\label{fig:cubes}
\end{center}
\end{figure}

Constrast Figure~\ref{fig:cubes} with Figure~\ref{fig:squares}, which
shows an approximation to $\DF(\bn)$ when $n_r=r^2$, which is known to
be regular. Here the sequence of rational extreme points limits on the
unique (non-rational) $\bal$ with $\Phi(\bal) = \bn$.

\begin{figure}[htbp]
\begin{center}
\includegraphics[width=0.5\textwidth]{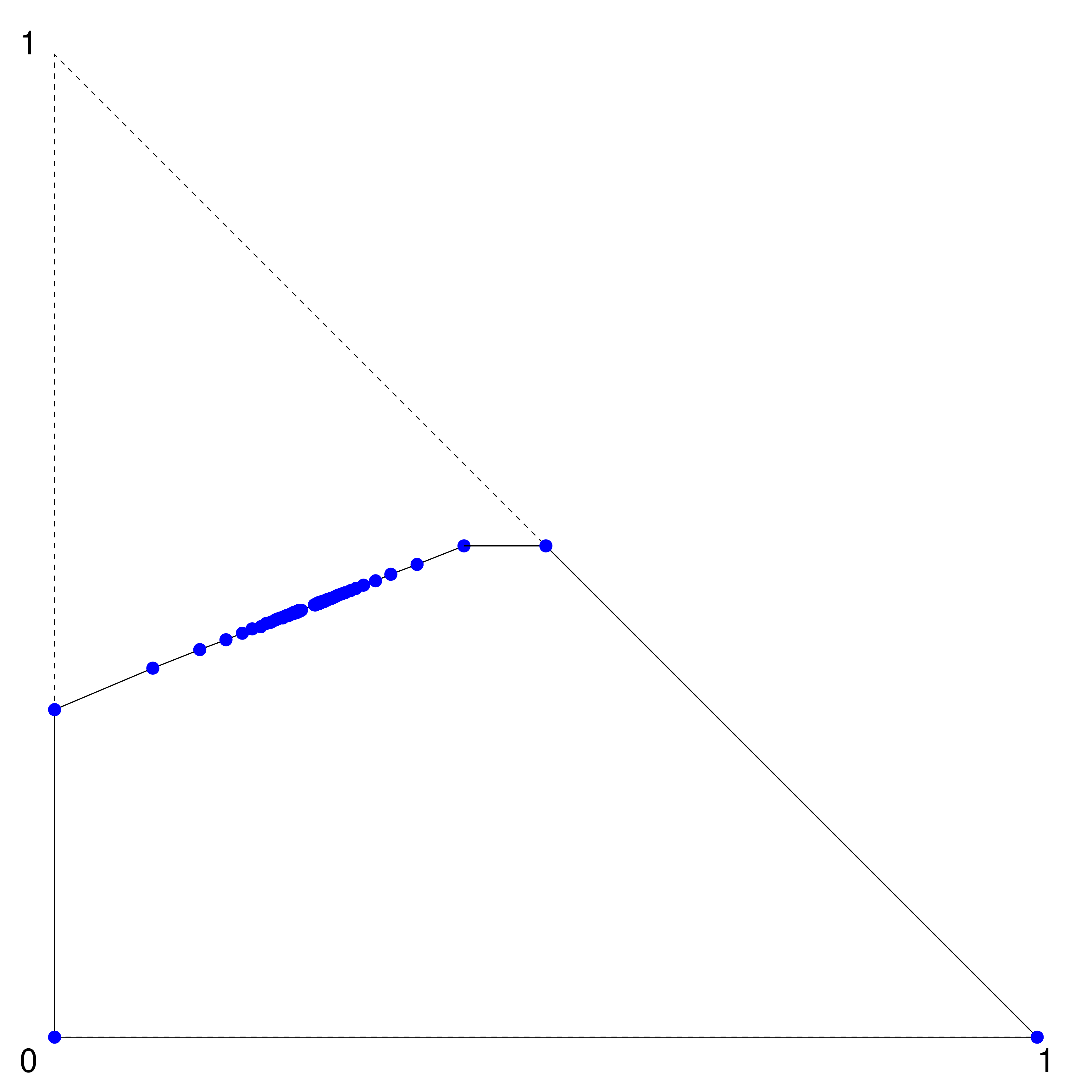}
\caption{An approximation to $\DF(\bn)$ when~$k=3$ in the regular case
  $n_r=r^2$}
\label{fig:squares}
\end{center}
\end{figure}

\item Let~$k=3$, and pick $\bal\in\Delta'$ with $\Phi(\bal) = \bn$. In
  this example we consider the way in which the rational extreme
  points $K_{n_0}^{-1}\circ\cdots\circ K_{n_s}^{-1}(0,1,0)$ (with
  $n_{s+1}\not=0$) are ordered around the boundary of $\DF(\bn)$. Each
  such point either lies on the {\em even} segment of
  $\partial\DF(\bn)$, which joins $(1,0,0)$ to~$\bal$ and does not
  contain $(0,1,0)$; or on the {\em odd} segment, which joins
  $(0,1,0)$ to $\bal$ and does not contain $(1,0,0)$. We claim that
  the extreme points $K_{n_0}^{-1}\circ\cdots\circ K_{n_s}^{-1}(0,1,0)$
  with~$s$ even (respectively odd) are contained in the even
  (respectively odd) segment, and move monotonically along the segment
  towards~$\bal$ as $s$ increases. In other words, in pictures such as
  those of Figures~\ref{fig:short},~\ref{fig:cubes},
  and~\ref{fig:squares}, the points with $s$ even move counter-clockwise
  around the boundary starting at the bottom right vertex~$(1,0,0)$,
  while those with $s$ odd move clockwise starting at the
  bottom left vertex~$(0,1,0)$.

The claim can be proved in the case where $\bal$ is rational, with
$\Phi(\bal) = n_0\,n_1\,\ldots\,n_r\,\overline{0}$, by induction
on~$r$, using Lemma~\ref{lem:lfs-recursive} and the fact that each
$K_n^{-1}$ is orientation-reversing. The result then follows in the
non-rational case since the extreme points $K_{n_0}^{-1}\circ\cdots\circ
K_{n_s}^{-1}(0,1,0)$ on the boundary of $\DF(\bn)$ are the same points
as those on the boundary of $\DF(\bm)$, where $\bm =
n_0\,n_1\,\ldots\,n_{s+1}\,\overline{0}$.

In particular, if $\bal$ is regular then it is a limit of rational
extreme points of $\DF(\bn)$ from both sides (as in
Figure~\ref{fig:squares}), unless there is some $s\in\N$ such that
$n_{s+2i}=0$ for all $i\in\N$ (so that none of the points
$K_{n_0}^{-1}\circ\cdots\circ K_{n_{s+2i-1}}^{-1}(0,1,0)$ is an
extreme point). By Facts~\ref{facts:infifacts}i), such an~$s$ exists
if and only if $K^r(\bal)$ lies on the boundary of~$\Delta$ for some
$r\in\N$. In this case, by the convexity of $\DF(\sigma^r(\bn))$ and
the fact that $\cF\subset\DF(\sigma^r(\bn))$, the boundary of
$\DF(\sigma^r(\bn))$ contains a segment~$I$ with endpoints $K^r(\bal)$
and either $(0,1,0)$ or $(1,0,0)$. This segment is contained in a face
of~$\Delta$, and so has rational direction. By
Lemma~\ref{lem:lfs-recursive}, $K_{n_0}^{-1}\circ\cdots\circ
K_{n_{r-1}}^{-1}(I)$ is a segment of the boundary of $\DF(\bn)$ which
has one endpoint at $\bal$ and the other endpoint at a rational
boundary point of $\DF(\bn)$.

In summary, if $\bal$ is regular then there is a segment of the
boundary of $\DF(\bn)$ which contains $\bal$ if and only if every
other entry of $\Phi(\bal)$ is eventually zero; and in this case, the
segment is necessarily contained in a line of rational direction which passes
through rational vectors, so that the components of $\bal$ are
rationally dependent. See Figure~\ref{fig:oneside}, which
depicts $\DF(\bn)$ for $\bn = 1\,1\,\overline{1\,0}$. The irrational
extreme point~$\bal$ is a limit of rational extreme points from the odd side
only, since $n_s = 0$ for all odd~$s\ge3$. In fact $\bal =
((1+\sqrt5)/6, (3-\sqrt5)/6, 1/3)$, and the rational extreme point
adjacent to $\bal$ is $(2/3,0,1/3)$.
 
\end{enumerate}
\begin{figure}[htbp]
\begin{center}
\includegraphics[width=0.5\textwidth]{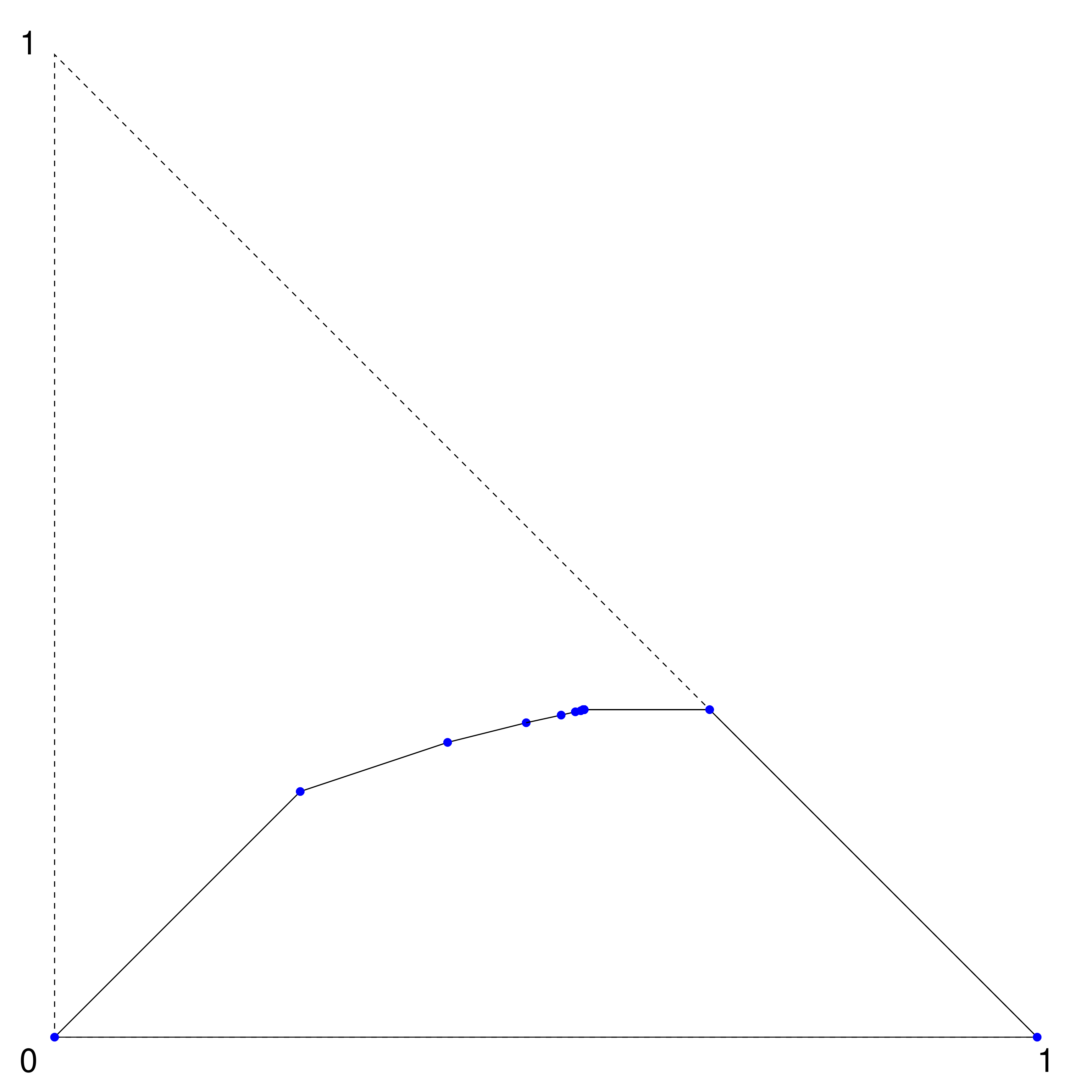}
\caption{An approximation to $\DF(\bn)$ where $\bn= 1\,1\,\overline{1\,0}$}
\label{fig:oneside}
\end{center}
\end{figure}
\end{examples}

\subsection{The digit frequency set varies continuously}

\begin{theorem}
\label{thm:continuous}
The function $\DF\colon\cN\to\cC(\Delta)$ is continuous.
\end{theorem}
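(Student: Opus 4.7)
The plan is to establish Hausdorff continuity of $\DF$ at each $\bn\in\cN$ by verifying both $\limsup_r\DF(\bn^{(r)})\subset\DF(\bn)$ and $\DF(\bn)\subset\liminf_r\DF(\bn^{(r)})$ for an arbitrary convergent sequence $\bn^{(r)}\to\bn$. The two key ingredients will be the characterisation $\DF(\bn)=\{\bal\in\Delta:\Phi(\bal)\le\bn\}$ from~(\ref{eq:DFn}) together with lower semi-continuity of the extended map $\Phi\colon\Delta\to\cN$ (which inherits from Facts~\ref{facts:infifacts}d), since $\cF$ is closed and $\Phi\equiv\infty$ there), and the order-topology of $(\cN,d)$. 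For the latter I would first record that finite type elements are dense in $\cN$ (the word $n_0\,n_1\,\ldots\,n_R\,\infty$ lies within $2^{-(R+1)}$ of $\bn$), and that a direct computation with $d$ shows $d(\bq,\bm)\ge d(\bq,\bq^*)$ for each rational-finite pair $\bq<\bq^*$ and each $\bm>\bq$, so $\{\bm:\bm\le\bq\}=\{\bm:\bm<\bq^*\}$ is clopen. These combine to give the useful fact that whenever $\bn<\bp$ in $\cN$ there exists $\bp'$ of finite type with $\bn<\bp'\le\bp$ such that $\{\bm<\bp'\}$ is an open neighbourhood of $\bn$, and symmetrically for $\bq<\bn$ there exists a finite type $\bq'$ with $\bq<\bq'\le\bn$ and $\{\bm\ge\bq'\}$ an open neighbourhood of $\bn$.

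Upper semi-continuity then goes by contradiction: if $\bal^{(r)}\in\DF(\bn^{(r)})$, $\bal^{(r)}\to\bal$, and $\Phi(\bal)>\bn$, pick $\bp'$ of finite type with $\bn<\bp'\le\Phi(\bal)$. Eventually $\bn^{(r)}<\bp'$, so $\Phi(\bal^{(r)})\le\bn^{(r)}<\bp'$, and lower semi-continuity of $\Phi$ forces $\Phi(\bal)\le\bp'<\Phi(\bal)$, the required contradiction.

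For lower semi-continuity, fix $\bal\in\DF(\bn)$ and $\epsilon>0$; I would produce $\bbeta\in\Delta$ with $d(\bbeta,\bal)<\epsilon$ and $\Phi(\bbeta)<\bn$. Choosing then $\bq'$ of finite type with $\Phi(\bbeta)<\bq'\le\bn$, the open neighbourhood $\{\bm\ge\bq'\}$ of $\bn$ forces $\bn^{(r)}\ge\bq'>\Phi(\bbeta)$ for all large $r$, hence $\bbeta\in\DF(\bn^{(r)})$. If $\Phi(\bal)<\bn$ (in particular if $\bal\in\cF$, where $\Phi\equiv\infty$), I take $\bbeta=\bal$; otherwise $\Phi(\bal)=\bn$ and $\bal\in\Phi^{-1}(\bn)$, and Lemma~\ref{lem:face-converge} supplies $s\ge k-2$ with $d_H(\cF_{\bn,s},\Phi^{-1}(\bn))<\epsilon$, so any $\bbeta\in\cF_{\bn,s}$ within $\epsilon$ of $\bal$ satisfies $\Phi(\bbeta)<\bn$ by~(\ref{eq:cF-itin}). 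Covering the compact set $\DF(\bn)$ by finitely many $\epsilon$-balls around $\bal_1,\ldots,\bal_N$ and selecting a corresponding $\bbeta_i$ for each centre promotes this pointwise approximation to uniform Hausdorff convergence.

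The main obstacle is the exceptional case where $\Phi(\bal)=\bn$ while $\bn^{(r)}$ approaches $\bn$ from below, since then no single nearby point of $\Delta$ has itinerary uniformly bounded above by all the $\bn^{(r)}$, and the simplex $\Phi^{-1}(\bn)$ can have positive dimension. Lemma~\ref{lem:face-converge} is precisely what lets us deform $\bal$ off $\Phi^{-1}(\bn)$ onto the rational face $\cF_{\bn,s}$, whose itineraries lie strictly below $\bn$; once this is done, the order-topological bookkeeping on $\cN$ summarised in the first paragraph takes over and delivers the result.
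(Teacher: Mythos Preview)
Your proof is correct and takes a genuinely different route from the paper's.

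The paper argues by cases on the type of~$\bn$. For $\bn\in\N^\N$ it uses Lemma~\ref{lem:face-converge} to find~$r$ with $d_H(\cF_{\bn,r},A_{\bn,r})<\epsilon$, and then invokes the explicit sandwich $L_{\bn,r}\subset\DF(\bm)\subset L_{\bn,r}\cup A_{\bn,r}$ from Lemma~\ref{lem:bounds}, valid for any~$\bm$ agreeing with~$\bn$ to~$r$ symbols, to bound the symmetric difference $\DF(\bm)\bigtriangleup\DF(\bn)$ inside $A_{\bn,r}\subset B_\epsilon(\cF_{\bn,r})$. Finite type~$\bn$ is then handled separately via the recursive decomposition of Lemma~\ref{lem:lfs-recursive}. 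Your argument is instead order-theoretic: you use the description $\DF(\bn)=\{\bal:\Phi(\bal)\le\bn\}$ together with lower semi-continuity of~$\Phi$ and the clopen sets in~$\cN$ furnished by rational--finite pairs to get upper semi-continuity, and Lemma~\ref{lem:face-converge} (essentially in the form of Corollary~\ref{cor:ball-less}) to perturb points of $\Phi^{-1}(\bn)$ to points with strictly smaller itinerary for lower semi-continuity. The paper's approach is more concrete and dovetails with the $L_{\bn,r}$, $U_{\bn,r}$ machinery it has already built; yours is cleaner in that it avoids the case split and isolates exactly which order-topological facts about~$\cN$ are doing the work.

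Two very small points worth tightening. In the upper semi-continuity step you write ``$\Phi(\bal)\le\bp'<\Phi(\bal)$'': what lower semi-continuity actually gives from $\Phi(\bal^{(r)})<\bp'$ is $\Phi(\bal)\le\bq'$ where $\bq'$ is the rational partner of~$\bp'$, hence $\Phi(\bal)<\bp'$; the contradiction is the same. And the parenthetical ``in particular if $\bal\in\cF$'' does not literally give $\Phi(\bal)<\bn$ when $\bn=\infty$, but that case is trivial since $\cF\subset\DF(\bm)$ for every~$\bm$.
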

\begin{proof}
Let $\bn\in\cN$. We shall show that $\DF$ is continuous at~$\bn$, and
we consider first the case where $\bn\in\N^\N$ is not of finite type. 

Let~$\epsilon>0$. By Lemma~\ref{lem:face-converge}, and since
$A_{\bn,r}\to\Phi^{-1}(\bn)$ as $r\to\infty$, there is some~$r$ with
\mbox{$d_H(\cF_{\bn,r}, A_{\bn,r}) < \epsilon$}. Then for any $\bm\in\cN$
which is close enough to~$\bn$ that $\Word{\bm}{r} = \Word{\bn}{r}$, we have,
by Lemma~\ref{lem:bounds},
\[\cF_{\bn,r} \subset L_{\bn,r} \subset \DF(\bm) \subset L_{\bn,r}\cup A_{\bn,r}
\quad\text{ and }\quad \cF_{\bn,r} \subset L_{\bn,r} \subset \DF(\bn)
\subset L_{\bn,r}\cup A_{\bn,r}.\] 
Therefore the symmetric difference
\[\DF(\bm) \bigtriangleup \DF(\bn) \subset A_{\bn,r} \subset
B_\epsilon(\cF_{\bn,r}) \subset B_\epsilon(\DF(\bm) \cap \DF(\bn)),\]
so that $d_H(\DF(\bm), \DF(\bn))<\epsilon$ as required.

\medskip\medskip

Next consider the case where $\bn = n_0\,\ldots\,n_{R-2}\,
n_{R-1}\,\infty\in\cN\setminus\N^\N$ (with $R\ge1$) is of finite
type. Now any $\bm\in\cN$ with $\bm\not=\bn$ and $d(\bm,\bn) <
2^{-\left(R+\sum_{s\le R-1}n_s\right)}$ is of the form $\bm =
n_0\,\ldots\,n_{R-2}\, n_{R-1}\,m_R\,\ldots$ for some $m_R\in\N$. It
therefore suffices to show that for all~$\epsilon>0$ there is some~$M$
such that every $\bm\in\cN$ of the form $\bm =
n_0\,\ldots\,n_{R-2}\, n_{R-1}\,m_R\,\ldots$ with $m_R\ge M$ has
$d_H(\DF(\bm), \DF(\bn)) < \epsilon$.

By~(\ref{eq:infinity-LF}) we have $\DF(\bn) =
\DF(n_0\,\ldots\,n_{R-2}\, (n_{R-1}+1)\, \overline{0})$. Applying
Lemma~\ref{lem:lfs-recursive} $R-1$ times to $\DF(\bn)$ and $\DF(\bm)$
and using the continuity of the maps $K_n^{-1}$, we can suppose that
$R=1$. We therefore need to show that if $\bm=n_0\,m_1\,\ldots$ with $m_1$
sufficiently large, then $d_H(\DF(\bm), \DF((n_0+1)\,\overline{0})) <
\epsilon$.

Now $\DF((n_0+1)\,\overline{0}) = T_{n_0}$, while
Lemma~\ref{lem:bounds} gives
\[L_{\bm,0} = T_{n_0} \subset \DF(\bm) \subset T_{n_0} \cup
K_{n_0}^{-1}(T_{m_1-1}) = U_{\bm,1}.\]
Since  $T_{m_1-1}\to\cF$ as $m_1\to\infty$ and $K_{n_0}^{-1}(\cF)
\subset T_{n_0}$, the result follows.

\medskip \medskip

The remaining case $\bn=\infty$ is straightforward since $\DF(\infty)
= \cF$ and $\DF(\bm) \subset T_{m_0-1} \to \cF$ as $m_0\to\infty$.
\end{proof}

Although $\DF(\bn)$ varies continuously with~$\bn$, the same is not
true of the set $\EP(\bn)$ of extreme points of $\DF(\bn)$. See
Figures~\ref{fig:cubes} and~\ref{fig:cubesmod}, which depict
respectively $\DF(\bn)$ and $\DF(\bm)$, in the case~$k=3$, for elements $\bn$ and $\bm$
of~$\N^\N$ which agree on their first 26 entries and are therefore
very close to each other. The two digit frequency sets are also very
close to each other, but the sets of extreme points are far apart. In
these examples, $n_r=r^3$ for $0\le r\le 25$ and $n_r=0$ for $r>25$;
while $m_r=n_r$ for all~$r$ except $r=26$, for which $m_r=100$.

\begin{figure}[htbp]
\begin{center}
\includegraphics[width=0.5\textwidth]{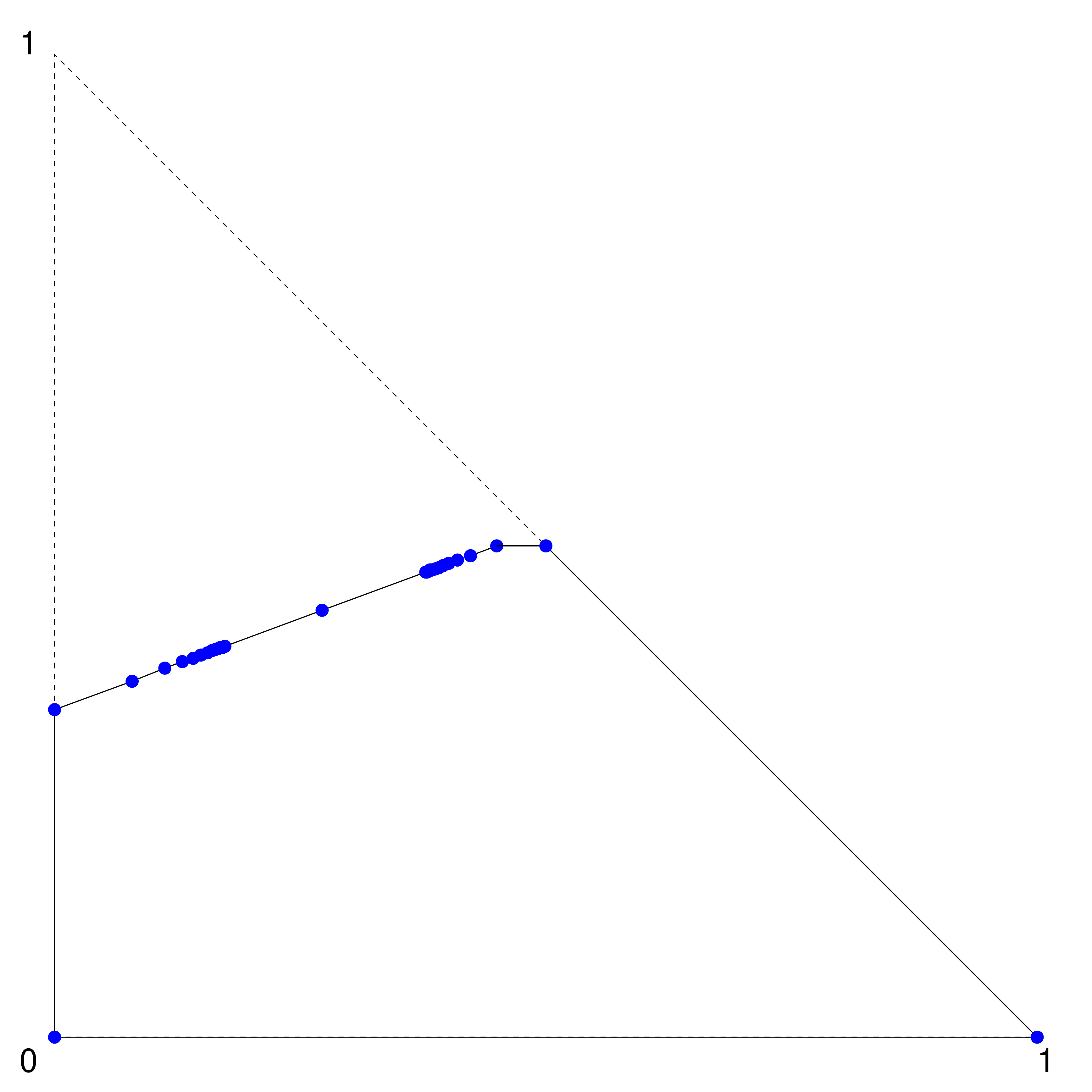}
\caption{A rational digit frequency set close to an exceptional one}
\label{fig:cubesmod}
\end{center}
\end{figure}

The proof of the following theorem shows how such examples can be
constructed formally in the case~$k=3$.

\begin{theorem}
\label{thm:EP-continuity}
Let~$k=3$. The function $\EP\colon\cN\to\cC(\Delta)$ is discontinuous
at $\bn\in\cN$ if and only if $\bn$ is either exceptional or of finite
type. 
\end{theorem}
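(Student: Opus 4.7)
The strategy is to handle the two equivalence directions separately. For ``regular non-finite-type implies continuous'', everything converges to the unique point $\bal\in\Phi^{-1}(\bn)$; for the converse, the plan is to construct sequences $\bm^{(n)}\to\bn$ whose digit frequency sets acquire an extreme point bounded away from $\EP(\bn)$.

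For continuity at $\bn$ regular and not of finite type, write $\Phi^{-1}(\bn)=\{\bal\}$. Given $\epsilon>0$, regularity together with the convergence $A_{\bn,R}\to\Phi^{-1}(\bn)$ supply an $R$ with $A_{\bn,R}\subset B_\epsilon(\bal)$. For $\bm$ close enough to $\bn$, $\bm$ and $\bn$ agree on their first $R+1$ entries, so iterating Lemma~\ref{lem:extreme-points-recursive} shows that the rational extreme points $K_{n_0}^{-1}\circ\cdots\circ K_{n_s}^{-1}(\be_1)$ with $s\le R-1$ and $n_{s+1}\ne 0$ appear in both $\EP(\bm)$ and $\EP(\bn)$, while all remaining extreme points of either set (including $\Phi^{-1}(\bm)$ and $\bal$) lie in $\Upsilon_{\bn,R-1}(\Delta)=A_{\bn,R-1}\subset B_\epsilon(\bal)$. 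Since $\bal\in\EP(\bn)$, this forces $d_H(\EP(\bm),\EP(\bn))\le 2\epsilon$.

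For discontinuity at finite-type $\bn=W\infty$ with $W=n_0\ldots n_R$, take $\bm^{(n)}=W\,q^{(n)}\,q^{(n)}\,\overline{0}$ with $q^{(n)}\to\infty$. Then $\bm^{(n)}\to\bn$ in $\cN$, and since $K_{q^{(n)}}^{-1}K_{q^{(n)}}^{-1}(\be_2)=(q^{(n)},q^{(n)},1)/(2q^{(n)}+1)$,
\[
\Phi^{-1}(\bm^{(n)})=\Upsilon_{\bn,R}\bigl((q^{(n)},q^{(n)},1)/(2q^{(n)}+1)\bigr)\longrightarrow \Upsilon_{\bn,R}((1/2,1/2,0))=:\bp^{*}.
\]
Tracking the $K$-iterates of $\bp^{*}$ (using that $K_{n_R}^{-1}((1/2,1/2,0))$ lies on the upper boundary of $\closure{\Delta_{n_R}}$, with first $K$-image $(1/2,0,1/2)$ and second image $\be_2$) yields $\Phi(\bp^{*})=n_0\ldots n_{R-1}(n_R+1)\,1\,\overline{0}$, an itinerary distinct from that of every element of the finite set $\EP(\bn)=\EP(n_0\ldots n_{R-1}(n_R+1)\overline{0})$, so $\bp^{*}\notin\EP(\bn)$ and $d(\bp^{*},\EP(\bn))>0$.

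For discontinuity at exceptional $\bn$, write $\Phi^{-1}(\bn)=[\bv_1,\bv_2]$ with midpoint $\bc$. Pick $R_n\to\infty$ with $d_H(\cF_{\bn,R_n},\Phi^{-1}(\bn))\to 0$ by Lemma~\ref{lem:face-converge}. Since $\Upsilon_{\bn,R_n}$ restricts to a projective homeomorphism from the edge $\cF$ onto $\cF_{\bn,R_n}$, there exist $t_n\in(0,1)$ with $\Upsilon_{\bn,R_n}((1-t_n,t_n,0))\to\bc$; approximate each $(1-t_n,t_n,0)$ by a rational $(p_1^{(n)},p_2^{(n)},1)/(p_1^{(n)}+p_2^{(n)}+1)$ with $p_i^{(n)}\to\infty$ and $p_2^{(n)}/(p_1^{(n)}+p_2^{(n)})\to t_n$, and set $\bm^{(n)}=n_0\ldots n_{R_n}\,p_1^{(n)}\,p_2^{(n)}\,\overline{0}$. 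Then $\bm^{(n)}\to\bn$, and the identity $K_{p_1^{(n)}}^{-1}K_{p_2^{(n)}}^{-1}(\be_2)=(p_1^{(n)},p_2^{(n)},1)/(p_1^{(n)}+p_2^{(n)}+1)$ forces the extreme point $\Phi^{-1}(\bm^{(n)})\in\EP(\bm^{(n)})$ to converge to $\bc$. The main obstacle is showing $d(\bc,\EP(\bn))>0$: I would adapt the even/odd monotonicity argument of Example~\ref{ex:ep}c) from the regular to the exceptional setting, establishing that the rational extreme points of $\DF(\bn)$ march monotonically along the two arcs of $\partial\DF(\bn)$ meeting $\Phi^{-1}(\bn)$ toward $\bv_1$ and $\bv_2$ respectively, so that $\EP(\bn)$ has no accumulation in the open segment $(\bv_1,\bv_2)$ and hence $\bc$ stays at distance at least $|\bv_1-\bv_2|/2$ from $\EP(\bn)$.
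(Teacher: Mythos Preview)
Your proof is correct in all three cases, and for the regular and finite-type cases it follows the paper's line closely (your itinerary-based check that $\bp^*\notin\EP(\bn)$ is a clean alternative to the paper's explicit coordinate computation of $L$). In the exceptional case, however, you work considerably harder than the paper does, on two fronts.

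First, for constructing $\bm^{(n)}\to\bn$: rather than invoking Lemma~\ref{lem:face-converge} and engineering tails $p_1^{(n)}\,p_2^{(n)}\,\overline{0}$, the paper simply observes that each $A_{\bn,r}$ is a $2$-simplex containing $\ell$ in its interior, so one can pick any rational $\bal^{(r)}\in A_{\bn,r}$ with $d(\bal^{(r)},\bc)<2^{-r}$ and set $\bn^{(r)}=\Phi(\bal^{(r)})$. Since every point of $A_{\bn,r}$ has itinerary agreeing with $\bn$ on its first $r-2$ entries, $\bn^{(r)}\to\bn$; and $\bal^{(r)}=\Phi^{-1}(\bn^{(r)})\in\EP(\bn^{(r)})$ automatically.

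Second, for $d(\bc,\EP(\bn))>0$: adapting the even/odd monotonicity of Example~\ref{ex:ep}c) certainly works, but a two-line convexity argument suffices. The rational extreme points of $\DF(\bn)$ are vertices of the triangles $A_{\bn,r}$, which converge to $\ell$, so any accumulation point lies in $\ell$. But $\ell\subset\partial\DF(\bn)$ (for each $r$ the vertex $\VV{r}{2}\notin\DF(\bn)$, so points arbitrarily near $\ell$ miss $\DF(\bn)$), and near an interior point of a straight segment in the boundary of a planar convex body, the boundary \emph{is} that segment; hence no extreme point off $\ell$ can accumulate there. Your monotonicity route gives more structural information about the arrangement of extreme points, but the paper's argument is substantially shorter and avoids having to extend Example~\ref{ex:ep}c) from the regular to the exceptional setting.
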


\begin{proof}
Continuity in the case where $\bn\in\N^\N$ is regular is proved
similarly to continuity of $\DF$ (Theorem~\ref{thm:continuous}). If
$\bn\in\cN$ satisfies $\Word{\bm}{r}=\Word{\bn}{r}$ then
$\EP(\bm)\bigtriangleup\EP(\bn)\subset A_{\bn,r}$, and $A_{\bn,r}$
contains points of both $\EP(\bm)$ and $\EP(\bn)$. Since $A_{\bn,r}$
converges to the point $\Phi^{-1}(\bn)$ as $r\to\infty$, the result
follows. 

\medskip

Next, consider the case where $\bn = n_0\,\ldots\,n_r\,\infty$ is of
finite type, and set $\bn^{(i)} =
n_0\,\ldots\,n_r\,i\,i\,\overline{0}$ for each $i\ge 1$. Then
$\bn^{(i)}\to\bn$ as $i\to\infty$, and it suffices to show that 
$\EP(\bn^{(i)})$ does not converge to $\EP(\bn)$.

Recall that $\EP(\bn)=\EP(n_0\,\ldots\,n_{r-1}\,(n_r+1)\,\overline{0})$
by~(\ref{eq:infinity-LF}). One of the elements of $\EP(\bn^{(i)})$ is
\[
K_{n_0}^{-1}\circ\cdots\circ K_{n_r}^{-1}\circ K_i^{-1}\circ
K_i^{-1}(0,0,1) \to L:=K_{n_0}^{-1}\circ\cdots\circ
K_{n_{r-1}}^{-1}\left(
\frac{n_r+1}{n_r+3},\,\frac{1}{n_r+3},\,\frac{1}{n_r+3}
\right)\quad\text{ as }i\to\infty.
\] 
Since $\EP(\bn)$ has finitely many elements, it only remains to show
that none of them is equal to this limit~$L$. It is clearly impossible for
an extreme point $K_{n_0}^{-1}\circ\cdots\circ K_{n_s}^{-1}(0,1,0)$
(where $s\le r-1$) to coincide with $L$, since $(0,1,0)$ is not in the
$K$-orbit of any interior point of~$\Delta$. The only remaining
non-trivial element of $\EP(\bn)$ is
\[
K_{n_0}^{-1}\circ\cdots\circ K_{n_{r-1}}^{-1}\circ K_{n_r+1}^{-1}(0,0,1)
= K_{n_0}^{-1}\circ\cdots\circ K_{n_{r-1}}^{-1}\left(
\frac{n_r+1}{n_r+2},\,0,\,\frac{1}{n_r+2}
\right),
\]
which is also distinct from~$L$.

The case $\bn=\infty$ can be treated similarly by
considering~$\bn^{(i)}=i\,i\,\overline{0}$ and using
$\DF(\infty)=\cF$.

\medskip

Finally, then, consider the case where $\bn\in\N^\N$ is exceptional,
and let $\ell=\Phi^{-1}(\bn)$ be the exceptional interval in the
boundary of $\DF(\bn)$. Observe first that the points of $\EP(\bn)$
can only accumulate on the endpoints of~$\ell$. For the rational
elements of $\EP(\bn)$ are vertices of the 2-simplices~$A_{\bn,r}$,
which converge to~$\ell$, so all accumulation points must be
in~$\ell$; and an accumulation in the interior of~$\ell$ would
contradict the convexity of $\DF(\bn)$.

Let $\bal$ be the midpoint of $\ell$. The 2-simplices~$A_{\bn,r}$
contain~$\ell$ in their interior (since they have rational vertices,
exactly two of which lie in~$\DF(\bn)$), and every point of
$A_{\bn,r}$ has itinerary starting $n_0\,\ldots\,n_{r-3}$
(Section~\ref{sec:converge}). For each~$r\ge 0$, pick a rational point
$\bal^{(r)}$ of $A_{\bn,r}$ with $d(\bal, \bal^{(r)})<2^{-r}$, and
let~$\bn^{(r)} = \Phi(\bal^{(r)})$. Then $\bn^{(r)}\to\bn$,
$\bal^{(r)}\in \EP(\bn^{(r)})$, and $\bal^{(r)}\to\bal$. Since $\bal$
  is bounded away from $\EP(\bn)$, this establishes the discontinuity
  of $\EP$ at $\bn$, as required.
\end{proof}

\begin{remarks}\mbox{}
\begin{enumerate}[a)]
\item In the final paragraph of the proof, the rational points
  $\bal^{(r)}$ can be chosen all to be contained in, or all to be
  disjoint from, $\DF(\bn)$. Therefore $\EP$ is discontinuous from
  both sides at exceptional itineraries.
\item The proofs of continuity of $\EP$ in the regular non-finite case, and
of discontinuity in the finite case, work for all $k\ge 3$ (with minor
modifications in the finite case). The proof of discontinuity in the
exceptional case does not generalise so easily, principally because of
our ignorance of the set of accumulation points of $\E(\bn)$ in higher
dimensions (Remark~\ref{rmk:extreme-pt-thm}c)).
\end{enumerate}
\end{remarks}

%---------------------------------------------------------------

\section{Application to $\beta$-expansions}
\label{sec:beta-expansion}

\subsection{Reinterpretation of results on symbolic $\beta$-shifts}

The results of Section~\ref{sec:beta-shift} will now be applied to
digit frequencies of $\beta$-expansions. We start by recalling some
notation and a key fact from Section~\ref{sec:intro}.

Fix throughout an integer $k\ge 2$ and work with $\beta\in(k-1,k)$, so
that digit sequences of $\beta$-expansions lie in $\Sigma =
\{0,\ldots,k-1\}^\N$, which we order lexicographically. For each~$\beta$, write $\DF(\beta)$ for
the set of all digit frequencies of (greedy)
$\beta$-expansions of numbers $x\in[0,1]$, a subset of the standard
$(k-1)$-simplex~$\Delta$. While it is convenient to restrict to
non-integer $\beta$, we note that $\DF(k)=\Delta$, and $\DF(k-1)=\cF$,
the face $\alpha_{k-1}=0$ of $\Delta$.

The set $Z_\beta$ of all digit
sequences~$d_\beta(x)$ of $\beta$-expansions of~$x\in[0,1]$ is given
by
\[Z_\beta = \{v\in\Sigma\,:\,\sigma^r(v) < w_\beta\text{ for all
}r\in\N\} \cup \{d_\beta(1)\},\]
where 
\[
w_\beta = 
\begin{cases}
\overline{d_1\ldots d_{r-1}(d_r-1)} & \text{ if }d_\beta(1) =
d_1\ldots d_{r-1}d_r\,\overline{0}\,\, \text{ for some~$r\ge1$ with }d_r>0,\\
d_\beta(1) & \text{ otherwise.}
\end{cases}
\]

We need the following elementary facts about the sequences
$d_\beta(1)$ and $w_\beta$:
\begin{lemma}\mbox{}
\label{lem:beta-facts}
\begin{enumerate}[a)]
\item The functions $\beta\mapsto d_\beta(1)$ and $\beta \mapsto
  w_\beta$ from $(k-1,k)$ to $\Sigma$ are strictly increasing.
\item $\{w_\beta\,:\,\beta\in(k-1,k)\}$ is equal to the set of
  elements~$w$ of~$\cM^*$ which are not equal to~$\overline{k-1}$ and
  are not of the form $w=v\,\overline{0}$ for any word~$v$. In
  particular, for every $\bn\in\cN\setminus\{\infty, \overline{0}\}$
  there is a unique $\beta\in(k-1,k)$ with $w_\beta = S(\bn)$.
\end{enumerate}
\end{lemma}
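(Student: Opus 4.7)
For (a), both maps are strictly monotonic, which I prove by combining injectivity with weak monotonicity. For injectivity: if $d_{\beta_1}(1)=d_{\beta_2}(1)=s$ (resp.\ $w_{\beta_1}=w_{\beta_2}=s$), then both $\beta_i$ satisfy $\sum_{j\geq 1}s_j\beta^{-j}=1$ --- in the $w_\beta$ case this requires a direct geometric-series verification that the periodic form $w_\beta=\overline{d_1\ldots d_{r-1}(d_r-1)}$ is also an expansion of $1$, using $1=\sum_j d_j\beta^{-j}$. Since $\beta\mapsto\sum_j s_j\beta^{-j}$ is strictly decreasing in $\beta>1$ (as $s_1=k-1>0$), we conclude $\beta_1=\beta_2$. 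For weak monotonicity of $d_\beta(1)$, I induct on the length of prefix agreement: the recursion $f_\beta^r(1)=\beta f_\beta^{r-1}(1)-d_{r-1}$ with digits held constant gives $\tfrac{d}{d\beta}f_\beta^r(1)=f_\beta^{r-1}(1)+\beta\tfrac{d}{d\beta}f_\beta^{r-1}(1)>0$ for $r\geq 1$, so along any cylinder $f_\beta^r(1)$ is strictly increasing in $\beta$, forcing $d_{\beta_1}(1)_r\leq d_{\beta_2}(1)_r$ when $\beta_1<\beta_2$. For $w_\beta$ the cleanest route is to realise $w_\beta$ as the itinerary of $1$ under the left-continuous map $\tilde f_\beta\colon (0,1]\to (0,1]$ with $\tilde f_\beta(x)=\beta x-j$ on $\tilde I_j=(j/\beta,(j+1)/\beta]$, verified by direct case analysis matching the paper's definition, and to rerun the same monotonicity induction.

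For the forward inclusion in (b), I verify each of the four required properties of $w_\beta$. Starting with $k-1$ is immediate from $\beta>k-1$; maximality follows from order-preservation of $\tilde d_\beta$ on $(0,1]$, since $\sigma^r(w_\beta)=\tilde d_\beta(\tilde f_\beta^r(1))\leq\tilde d_\beta(1)=w_\beta$; the identity $w_\beta=\overline{k-1}$ would force $\beta=k$; and $w_\beta$ is not of form $v\overline{0}$ by hypothesis when $w_\beta=d_\beta(1)$, and because its periodic block begins with $k-1>0$ in the other case. For the reverse inclusion, given $w$ in the target set I define $\beta$ as the unique solution in $(1,\infty)$ of $\sum_{j\geq 1}w_j\beta^{-j}=1$: existence and uniqueness come from strict monotonicity of the series in $\beta$, divergence at $\beta\to 1^+$ (since $w\neq v\overline{0}$ gives $w$ infinitely many non-zero digits), and decay at $\beta\to\infty$. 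Direct evaluation at $\beta=k-1$ (where $w\neq v\overline{0}$ forces the sum $>1$) and $\beta=k$ (where $w\neq\overline{k-1}$ forces the sum $<1$) places $\beta$ in $(k-1,k)$. Verifying $w=w_\beta$ reduces to showing that, setting $x_r:=\sum_{j\geq 0}(\sigma^r w)_j\beta^{-j-1}$ with the recursion $x_r=\beta x_{r-1}-w_{r-1}$, we have $x_r\in\tilde I_{w_r}$ for every $r$. The lower bound $x_r>w_r/\beta$ is equivalent to $x_{r+1}>0$, which holds because no tail of $w$ is $\overline{0}$; the upper bound $x_r\leq(w_r+1)/\beta$, equivalent to $x_{r+1}\leq 1$, uses the lexicographic maximality $\sigma^{r+1}(w)\leq w$ essentially and amounts to the classical Parry converse, which I would either reproduce or cite.

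The ``in particular'' clause then follows from the main statement by checking that $S(\bn)$ lies in $\cM^*$, differs from $\overline{k-1}$, and is not of the form $v\overline{0}$, for every $\bn\in\cN\setminus\{\overline{0},\infty\}$: $S$ maps into $\cM$ by Lemma~\ref{lem:S-op-homeo} and $S(\bn)$ begins with $k-1$ since every $\Lambda_n$ sends $k-1$ to a word starting with $k-1$; $S(\bn)=\overline{k-1}$ forces $\bn=\overline{0}$ by an induction showing that any $n_r>0$ introduces a digit less than $k-1$ into the composition which then propagates (using that $\Lambda_n$ preserves the property of containing such a digit); and $S(\bn)$ has form $v\overline{0}$ only for $\bn=\infty$, since otherwise the substitution structure guarantees infinitely many $k-1$'s in the limit, as each $\Lambda_n$ sends each letter to a word containing a non-zero digit. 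The principal obstacle throughout is the upper-bound verification $x_r\leq 1$ in the reverse inclusion of (b) --- since the naive estimate from $\sigma^{r+1}(w)\leq w$ fails whenever $\beta<k$, this step is the technical heart of Parry's theorem and must be handled via a careful exploitation of the full maximality condition together with the defining equation $V(w)=1$.
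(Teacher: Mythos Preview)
Your proposal is correct but works considerably harder than the paper does. For part~(a), the paper proves strict monotonicity of $\beta\mapsto d_\beta(1)$ in a single stroke: if $\beta<\gamma$ and the digits agree up to index~$R$, then $f_\gamma^R(1)-f_\beta^R(1)\ge\beta^R(\gamma-\beta)$, so the digits must eventually diverge with $d_\beta(1)_{R+1}<d_\gamma(1)_{R+1}$. This is the same inductive growth you capture with your derivative recursion, but packaged directly as strict monotonicity rather than split into injectivity plus weak monotonicity. More notably, the paper handles $w_\beta$ by a one-line observation you do not use: since $w_\beta$ and $d_\beta(1)$ are either equal or \emph{consecutive} maximal sequences, strict monotonicity of $d_\beta(1)$ immediately forces strict monotonicity of $w_\beta$, avoiding your detour through the left-continuous map~$\tilde f_\beta$.

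For part~(b), the paper simply cites the classical Parry characterisation of the set $\{d_\beta(1)\}$ (as the non-periodic elements of $\cM^*$ other than $(k-1)\overline{0}$) and notes that the statement about $\{w_\beta\}$ is a translation of it. You instead essentially reprove Parry's theorem, and you correctly identify the upper bound $x_r\le 1$ as the crux --- but the paper sidesteps this entirely by citation. For the ``in particular'' clause, the paper's argument is again shorter than yours: it observes that $S(\bn)$ lies in the image of $\Lambda_{n_0}$, and any sequence in this image has infinitely many non-zero digits since $\Lambda_{n_0}$ sends each digit to a word containing a non-zero digit; this replaces your induction on the substitution structure.

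In short: your route is self-contained and would serve well in a text aiming to be independent of the Parry literature, while the paper's route is terser and leans on both the cited result and the ``consecutive maximal sequences'' trick.
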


\begin{proof} %Checked (including reference) 30-7-13
Let $k-1<\beta<\gamma<k$. We have $d_\beta(1)_0 =
d_\gamma(1)_0=k-1$. If $d_\beta(1)_r = d_\gamma(1)_r$ for $0\le r\le
R$, then $f^R_\gamma(1) - f^R_\beta(1) \ge
\beta^R(\gamma-\beta)$. There is therefore a greatest~$R$ with
$d_\beta(1)_r = d_\gamma(1)_r$ for $0\le r\le R$, and since
$f_\gamma^R(1) > f_\beta^R(1)$ it follows that $d_\beta(1)_{R+1} <
d_\gamma(1)_{R+1}$. That is, $\beta\mapsto d_\beta(1)$ is strictly
increasing. The same is true for $\beta\mapsto w_\beta$ since
$w_\beta$ and $d_\beta(1)$ are either equal or consecutive maximal
sequences.

b) is a translation of the well known result (see for example
corollary~7.2.10 of~\cite{ACW}) that an element~$w$ of~$\cM^*$ is
equal to $d_\beta(1)$ for some~$\beta\in(k-1,k)$ if and only if it is
not periodic and not equal to $(k-1)\,\overline{0}$. If
$\bn\not=\infty$ and $\bn\not=\overline{0}$ then $S(\bn)$ is an
element of~$\cM^*$, not equal to $\overline{k-1}$, which is in the
image of $\Lambda_{n_0}$ and hence is not of the form
$v\,\overline{0}$: therefore $S(\bn) = w_\beta$ for some~$\beta$,
which is unique by~a).
\end{proof}

\begin{defn}[$\beta\colon\cN\setminus\{\infty,\,\overline{0}\}\to(k-1,k)$]
\label{defn:beta_n}
Define $\beta\colon \cN\setminus\{\infty,\,\overline{0}\}\to (k-1,k)$ by
\[
\beta(\bn) = \text{the unique $\beta$ with $w_\beta=S(\bn)$}.
\]
\end{defn}
This is a strictly increasing function by Lemmas~\ref{lem:S-op-homeo} and~\ref{lem:beta-facts}a).

The following definition and lemma make the connection between
$\DF(\beta)$ for \mbox{$\beta\in(k-1,k)$}, and $\DF(\bn)$ for
$\bn\in\cN$. The condition $w_0=k-1$ in the definition is to ensure that
\mbox{$S(\infty) = (k-1)\,\overline{0} \le w$}, so that the maximum is
defined.

\begin{defn}
\label{defn:n_w}
Let $\bn\colon\{w\in\Sigma\,:\,w_0 = k-1\} \to \cN$ be the function
defined by
\[
\bn(w) = \max\,\{\bm\in\cN\,:\,S(\bm)\le w\}.
\]
\end{defn}

\begin{lemma}
\label{lem:expansion-translate}
Let $\beta\in(k-1,k)$. Then $\DF(\beta) = \DF(\bn(w_\beta)) =
\DF(\bn(d_\beta(1)))$.
\end{lemma}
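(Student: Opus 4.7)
The plan is to establish the first equality $\DF(\beta)=\DF(\bn(w_\beta))$ by reducing to showing $\DF(\beta)=\DF(w_\beta)$ (with the right side interpreted via $X(w_\beta)$ as in Section~\ref{sec:beta-shift}); this reduction is immediate from Lemma~\ref{lem:w-in-J} together with the fact that $S$ is an order-preserving bijection $\cN\to\cJ$, so $S(\bn(w_\beta))=\max\{t\in\cJ\,:\,t\le w_\beta\}$. For the forward inclusion, any $v\in Z_\beta$ with $v\ne d_\beta(1)$ satisfies $\sigma^r(v)<w_\beta$ and so lies in $X(w_\beta)$; if $v=d_\beta(1)\ne w_\beta$, then $d_\beta(1)$ is eventually~$\overline{0}$, so its digit frequency is $\be_0$, which always lies in $\DF(w_\beta)$ (realised by $\overline{0}\in X(w_\beta)$).

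For the reverse inclusion, let $\alpha\in\DF(w_\beta)$, so $\cI(\alpha)\le w_\beta$ by Lemma~\ref{lem:describe-lfs}. If $\cI(\alpha)<w_\beta$, then $\cI(\alpha)=\inf\cM(\alpha)$ from Facts~\ref{facts:infifacts}a) produces some $v\in\cM(\alpha)$ with $v<w_\beta$, and maximality of $v$ gives $\sigma^r(v)\le v<w_\beta$, so $v\in Z_\beta$. The harder case is $\cI(\alpha)=w_\beta$: I would run the concatenation construction from the proof of Lemma~\ref{lem:describe-lfs} (picking rational $\alpha_n\to\alpha$ with $\cI(\alpha_n)<w_\beta$, which exist by Corollary~\ref{cor:ball-less}) to obtain $v\in X(w_\beta)\cap\cR(\alpha)$, and then upgrade the conclusion $\sigma^r(v)\le w_\beta$ to a strict inequality. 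If $\alpha$ is exceptional then $w_\beta$ has no well-defined digit frequency while every $\sigma^r(v)$ has frequency~$\alpha$, so $\sigma^r(v)\ne w_\beta$. If $\alpha$ is rational (so $w_\beta=\overline{L}$ is periodic), then $\sigma^r(v)=w_\beta$ would force the blocks $B_n$ in the construction to eventually equal~$L$, contradicting $\cI(\alpha_n)=\overline{B_n}<\overline{L}$. Either way $\sigma^r(v)<w_\beta$ strictly, so $v\in Z_\beta$.

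For the second equality, the case $w_\beta=d_\beta(1)$ is trivial, so assume $d_\beta(1)=d_1\ldots d_{r-1}d_r\,\overline{0}$ with $d_r>0$ and $w_\beta=\overline{L}$ where $L=d_1\ldots d_{r-1}(d_r-1)$. The plan is to show
\[
\cJ\cap(w_\beta,\,d_\beta(1)]=\emptyset,
\]
which immediately yields $\bn(w_\beta)=\bn(d_\beta(1))$ from the definition of both as maxima. First I rule out $d_\beta(1)\in\cJ$: inspection of~(\ref{eq:lambda}) shows that $\Lambda_n(0)$ is never~$0$, so the only element of $\cJ$ which is eventually~$\overline{0}$ is $S(\infty)=(k-1)\,\overline{0}$; but $d_\beta(1)=(k-1)\,\overline{0}$ would force $\beta=k-1$, which is excluded.

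The main obstacle, which I expect to be the principal technical step, is ruling out $v\in\cJ\subset\cM$ with $w_\beta<v<d_\beta(1)$. Such a~$v$ must agree with $d_\beta(1)$ on its first $r-1$ digits, so its $(r-1)^{\text{th}}$ digit is either $d_r$ (which forces $v=d_\beta(1)$ from $v\le d_\beta(1)$, a contradiction) or $d_r-1$, in which case $v=LX$ with $X=\sigma^r(v)>\overline{L}$. Maximality gives $X\le LX$. Let $j$ be the first index where $X_j\ne L_{j\bmod r}$: the condition $X>\overline{L}$ forces $X_j>L_{j\bmod r}$, while $X\le LX$ forces $X_j\le(LX)_j$. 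If $j<r$ then $(LX)_j=L_j$, immediately contradicting $X_j>L_j$. If $j\ge r$ then $(LX)_j=X_{j-r}$, giving $X_{j-r}\ge X_j>L_{j\bmod r}=L_{(j-r)\bmod r}$, and so $X$ already disagrees with $\overline{L}$ at the earlier index $j-r$, contradicting minimality of~$j$. This completes the exclusion, and with it the proof.
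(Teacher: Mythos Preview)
Your route to the first equality is genuinely different from the paper's, and as written it has two gaps.

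First, in the case $\cI(\bal)=w_\beta$ you split into ``$\bal$ exceptional'' and ``$\bal$ rational'', but you never treat the case where $\bal$ is regular and non-rational. This case is easy to dispatch --- then $w_\beta=\cI(\bal)$ has the irrational digit frequency~$\bal$ by Facts~\ref{facts:infifacts}f), so $w_\beta$ cannot be periodic, hence $w_\beta=d_\beta(1)\in Z_\beta$ realises~$\bal$ --- but it does need to be said. Second, your rational-case upgrade (``$\sigma^r(v)=\overline L$ would force the blocks $B_n$ to eventually equal~$L$'') is not justified: eventual $|L|$-periodicity of $v$ does not obviously force $B_n=L$, since the $B_n$ may have lengths unrelated to~$|L|$, and $\sigma^s(v)$ at a block boundary may equal a nontrivial cyclic shift of $\overline L$ rather than $\overline L$ itself. (A minor point: Corollary~\ref{cor:ball-less} does not give \emph{rational} approximants with smaller infimax; you want Lemma~\ref{lem:face-converge}, as in the proof of Lemma~\ref{lem:describe-lfs}.)

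The paper sidesteps all of this by comparing $Z_\beta$ and $X(w_\beta)$ at the level of digit-frequency \emph{sets} rather than constructing witnesses. When $w_\beta=d_\beta(1)$, one has $Z_\beta\subset X(w_\beta)$ and any $v\in X(w_\beta)\setminus Z_\beta$ satisfies $\sigma^r(v)=d_\beta(1)$ for some~$r$, so contributes no new frequency. When $w_\beta\ne d_\beta(1)$, the two sets differ by at most the single frequencies of $d_\beta(1)$ (which is $\be_0$, already in both) and of $w_\beta$; but then the $f_\beta$-orbit of~$1$ is finite, so the Markov-partition argument of Section~\ref{sec:markov-example} makes $\DF(\beta)$ a polytope, and a compact convex set minus one point is never a polytope. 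This avoids any need to upgrade the weak inequality from Lemma~\ref{lem:concatenate}.

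For the second equality your argument is correct. You are in effect reproving that $w_\beta$ and $d_\beta(1)$ are \emph{consecutive} maximal sequences when they differ; the paper simply quotes this and observes that therefore $\bn(w_\beta)\ne\bn(d_\beta(1))$ would force $d_\beta(1)=S(\bn)$ for some $\bn\in\cN$, which (since $d_\beta(1)$ ends $\overline 0$) gives $\bn=\infty$ and $\beta=k-1$.
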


\begin{proof}
Suppose first that $f_\beta^r(1)\not=0$ for all~$r\in\N$, so that
$w_\beta = d_\beta(1)$. Then $\DF(\beta)$ is the set of
digit frequencies of elements of 
\[Z_\beta = \{v\in\Sigma\,:\,\sigma^r(v)< d_\beta(1)\text{ for all
}r\in\N\} \cup \{d_\beta(1)\},\] while, by Lemma~\ref{lem:w-in-J},
$\DF(\bn(w_\beta))$ is the set of digit frequencies of
elements of
\[X(w_\beta) = \{v\in\Sigma\,:\,\sigma^r(v) \le d_\beta(1)\text{ for
  all }r\in\N\}.\]
Now $Z_\beta\subset X(w_\beta)$, since $d_\beta(1)\in\cM$ and hence
$d_\beta(1)\in X(w_\beta)$. On the other hand, any element~$v$ of
$X(w_\beta)\setminus Z_\beta$ has $\sigma^r(v) = d_\beta(1)$ for
some~$r\ge 0$, and hence the digit frequency of~$v$, if it exists, is
equal to that of~$d_\beta(1)$. Therefore $\DF(\beta) = \DF(\bn(w_\beta))$.

Suppose instead that $d_\beta(1) = d_1\ldots
d_{r-1}d_r\overline{0}$, so that $w_\beta = \overline{d_1\ldots d_{r-1}(d_r -
1)}$.  Then $\DF(\beta)$ and $\DF(\bn(w_\beta))$ are the
sets of digit frequencies of elements of
\[Z_\beta = \{v\in\Sigma\,:\, \sigma^r(v) < \overline{d_1\ldots
d_{r-1}(d_r-1)} \text{ for all }r\in\N\} \cup \{d_\beta(1)\}\]
and
\[
X(w_\beta) = \{v\in\Sigma\,:\,\sigma^r(v) \le \overline{d_1\ldots
d_{r-1}(d_r-1)} \text{ for all }r\in\N\}
\]
respectively. Now $Z_\beta \setminus X(w_\beta) = \{d_\beta(1)\}$, which has digit
frequency $(1,0,\ldots,0)$, the same as the digit frequency of
$\overline{0}\in X(w_\beta)$. On the other hand, any element~$v$ of
$X(w_\beta)\setminus Z_\beta$ satisfies $\sigma^r(v) = w_\beta$ for
some~$r\in\N$, and hence has the same (rational) digit frequency as
$w_\beta$. 

Therefore the set of digit frequencies of $Z_\beta$ is a subset of the
set of digit frequencies of $X(w_\beta)$, and the difference between
the two is at most one point. However, since the $f_\beta$-orbit of
$1$ is finite, $\DF(\beta)$ is a polytope (see
Section~\ref{sec:markov-example}), and therefore cannot be obtained
from the compact convex set $\DF(\bn(w_\beta))$ by removal of a single
point. The two digit frequency sets are therefore equal. 

It remains to show that $\DF(\bn(w_\beta)) = \DF(\bn(d_\beta(1)))$. In
fact, $\bn(w_\beta) = \bn(d_\beta(1))$ for all~$\beta$. To see this,
observe that $S(\bn)$ is maximal for all $\bn\in\cN$, and $w_\beta$
and $d_\beta(1)$ are consecutive maximal elements if they are not
equal. Therefore if $\bn(w_\beta)\not=\bn(d_\beta(1))$ then
$d_\beta(1) = S(\bn)$ for some~$\bn\in\cN$. Since $d_\beta(1)$ has
digit frequency $(1,0,\ldots,0)$, this can only happen if
$\bn=\infty$ and $d_\beta(1) = (k-1)\,\overline{0}$. However if
$d_\beta(1)= (k-1)\,\overline{0}$ then $\beta = k-1$, a contradiction.
\end{proof}

Using this lemma we can interpret the results of
Section~\ref{sec:beta-shift} in terms of $\beta$-expansions. Before
doing so, we define intervals $I_{n_0\ldots n_R}
\subset (k-1,k)$ associated to each rational-finite pair. 
\begin{defn}[$I_{n_0\ldots n_R}$,\,\,$\cX$]
Given $R\ge 0$ and $n_0,\ldots,n_R\in\N$ write
\[I_{n_0\ldots n_R} = \left[
\beta(n_0\,\ldots\,n_{R-1}\,(n_R+1)\,\overline{0}),\,\, 
\beta(n_0\,\ldots\,n_{R-1}\,n_R\,\infty)
\right] \subset (k-1,k).
\]
Let $\cX$ denote the complement in~$(k-1,k)$ of the union of these
intervals. 
\end{defn}

\begin{theorem}
\label{thm:beta-exp-props}
Let~$k\ge 3$. Then
\begin{enumerate}[a)]
\item $\DF(\beta)$ is a compact convex set of dimension $k-1$ for all $\beta\in (k-1,k)$.
\item $\DF(\beta)$ has countably many extreme points, of which all but
  at most~$k-1$ are rational. There exist~$\beta$ for which the set of
  extreme points accumulates on $k-1$ non-rational points.
\item The extension $\DF\colon [k-1,k] \to \cC(\Delta)$ is continuous
  and increasing.
\item The $I_{n_0\ldots n_R}$ are mutually disjoint non-trivial closed
  subintervals of~$(k-1,k)$ whose union has full Lebesgue measure, on
  each of which the digit frequency set is a constant polytope with
  rational vertices.
\item The function $\beta\mapsto w_\beta$ restricts to a bijection
\[
  \cX \to \{S(\bn)\,:\,\bn \text{ is not of rational or finite type}\}.
\] 
In particular, $\DF$ is injective on~$\cX$, and $\DF(\cX)$
does not contain any polytopes.
\item The set $\DF([k-1,k]) \subset \cC(\Delta)$ is homeomorphic to a
  compact interval.
\end{enumerate}
\end{theorem}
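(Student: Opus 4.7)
The plan is to realize $\DF([k-1,k])$ as a quotient of the compact interval $[k-1,k]$ by an equivalence relation whose classes are closed subintervals, and then to invoke the standard fact that such a quotient is itself homeomorphic to a compact interval.

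First I will define $\sim$ on $[k-1,k]$ by $\beta\sim\beta'$ if and only if $\DF(\beta)=\DF(\beta')$, and identify its equivalence classes as precisely the mode-locking intervals $I_{n_0\ldots n_R}$ together with the singletons $\{\beta\}$ for $\beta\in\cX$. Constancy of $\DF$ on each $I_{n_0\ldots n_R}$ is part~d), and injectivity of $\DF$ on $\cX$ is part~e). To rule out cross-coincidences, I will argue via Lemma~\ref{lem:expansion-translate} that each $\beta\in\cX$ corresponds to an itinerary $\bn(w_\beta)\in\cN$ which is neither of rational nor of finite type (by part~e)), whereas each $\beta\in I_{n_0\ldots n_R}$ corresponds to one of the two elements of the rational-finite pair $\{n_0\ldots n_{R-1}(n_R{+}1)\overline{0},\, n_0\ldots n_{R-1}n_R\infty\}$; Corollary~\ref{cor:almost-injective} then forbids any further identifications.

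Next I will pass to the quotient $Q:=[k-1,k]/\sim$. The continuity of $\DF$ (part~c)) yields a continuous bijection $\overline{\DF}\colon Q\to\DF([k-1,k])$, and since $Q$ is compact (as a continuous image of $[k-1,k]$ under the quotient map) while $\cC(\Delta)$ is Hausdorff, $\overline{\DF}$ is automatically a homeomorphism onto its image. It therefore suffices to show that $Q$ is homeomorphic to $[0,1]$.

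Finally, since every $\sim$-class is a closed order-convex subinterval of $[k-1,k]$, the linear order on $[k-1,k]$ descends to a total order on $Q$, and the quotient topology coincides with the order topology (saturated open subintervals of $[k-1,k]$ map to open order-intervals in $Q$, and conversely). Moreover, $Q$ inherits a metric topology from $\DF([k-1,k])\subset\cC(\Delta)$. Hence $Q$ is a compact, connected, metrizable linearly ordered space with both a minimum (the $\sim$-class of $k-1$) and a maximum (the $\sim$-class of $k$), and any such space is homeomorphic to $[0,1]$. The main obstacle is the case analysis in the first step, which requires tracking the various correspondences between $\beta$-values, the sequences $w_\beta$, the itineraries $\bn\in\cN$, and the digit frequency sets; once that bookkeeping is completed, the quotient argument and the order-theoretic identification of $Q$ with $[0,1]$ are routine.
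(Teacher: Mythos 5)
Your proposal addresses only part~f) of the theorem. The statement you were asked to prove has six assertions, and your argument takes a), c), d) and e) as given inputs: you invoke ``part~d)'' for constancy of $\DF$ on the intervals $I_{n_0\ldots n_R}$, ``part~e)'' for injectivity of $\DF$ on $\cX$ and for the absence of polytopes in $\DF(\cX)$, and ``part~c)'' for continuity. None of a)--e) is established anywhere in your write-up, and they are not free: a) requires transporting Corollary~\ref{cor:dfs-compact} and Theorem~\ref{thm:convex} through Lemma~\ref{lem:expansion-translate}; b) rests on Theorem~\ref{thm:extreme-points} and Remark~\ref{rmk:extreme-pt-thm}b); c) needs a separate argument at the endpoints $k-1$ and $k$ (showing $\DF(\beta)\to\cF$ and $\DF(\beta)\to\Delta$ via the sets $T_{n_0-1}$ and $K_0^{-(R+2-k)}(\Delta)$); d) includes the full-Lebesgue-measure claim, which is a genuinely separate result (Theorem~\ref{thm:polytope-typical}, using Schmeling's theorem and an estimate on the Parry measure); and e) requires the analysis, via Lemma~\ref{lem:gaps}, of which infimax sequences arise as $w_\beta$ for $\beta\in\cX$. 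As a proof of the theorem as stated, this is a major gap: five of the six parts are missing.

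For part~f) itself, your argument is correct and is essentially the paper's, which disposes of f) in one sentence: by d) and e), collapsing each $I_{n_0\ldots n_R}$ to a point yields a compact interval on which $\DF$ descends to a continuous injection into the Hausdorff space $\cC(\Delta)$, whence the image is a compact interval. Your additional bookkeeping --- checking via Corollary~\ref{cor:almost-injective} that no two distinct mode-locking intervals share a digit frequency set, and that no $\beta\in\cX$ shares one with a mode-locking interval --- is a worthwhile elaboration (the cross-coincidence between $\cX$ and the intervals is ruled out most directly by noting that $\DF$ restricted to any $I_{n_0\ldots n_R}$ is a polytope while $\DF(\cX)$ contains none, which is exactly what you say). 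The order-topology identification of the quotient with $[0,1]$ is also sound, though once you know the quotient is compact, Hausdorff (being continuously and bijectively mapped to a metric space), connected, and linearly ordered compatibly with its topology, this is standard. So the route for f) is the same as the paper's; the deficiency is everything else.
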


\begin{proof} %Checked 30-7-13
a) is a restatement of Corollary~\ref{cor:dfs-compact} and Theorem~\ref{thm:convex} (the digit frequency
set having dimension $k-1$ since it strictly contains $\cF$), and b)
is immediate from Theorem~\ref{thm:extreme-points} and
Remark~\ref{rmk:extreme-pt-thm}b), in each case using
Lemma~\ref{lem:expansion-translate}.

\medskip

For~c), consider first $\DF\colon(k-1,k)\to\cC(\Delta)$. The
functions $\beta\mapsto w_\beta$, $w\mapsto\bn(w)$, and
$\bn\mapsto\DF(\bn)$ are all increasing, the first by
Lemma~\ref{lem:beta-facts} and the other two by definition. Therefore
$\beta \mapsto \DF(\beta) = \DF(\bn(w_\beta))$ is also increasing. To
show that it is continuous, fix $\beta\in(k-1,k)$ and $\epsilon>0$,
and let $\bn = \bn(w_\beta)$. Since $\bn\mapsto\DF(\bn)$ is continuous
by Theorem~\ref{thm:continuous}, we can find $\bm,\bp\in\cN$ with
$\bm<\bn<\bp$ and with $d_H(\DF(\bm),\DF(\bp)) < \epsilon$. (If $\bn$
is of rational type then we take $\bp$ to be the corresponding element
of finite type, with $\DF(\bp) = \DF(\bn)$; and if not, there are
$\bp>\bn$ arbitrarily close to~$\bn$. Similarly if $\bn$ is of finite
type then we take $\bm$ to be the corresponding element of rational
type; and if not, there are $\bm<\bn$ arbitrarily close to $\bn$.)
Then $d_H(\DF(\beta), \DF(\gamma)) < \epsilon$ for all
$\gamma\in(\beta(\bm), \beta(\bp))$.

Since $\DF(k-1)=\cF$ and $\DF(k)=\Delta$, the extension to $[k-1,k]$
is clearly increasing.  That $\DF(\beta)\to\cF$ as $\beta\searrow k-1$
is a consequence of the fact that $\DF(\beta)\subset T_{n_0-1}$ if
$\bn(\beta)$ begins with $n_0$; and that $\DF(\beta)\to\Delta$ as
$\beta\nearrow k$ follows from the observation, using
Theorem~\ref{thm:extreme-points}, that if $\bn(\beta)$ begins with
$0^R$, where $R\ge k-2$, then every non-trivial extreme point of
$\DF(\beta)$ lies in $K_0^{-(R+2-k)}(\Delta)$, which converges
Hausdorff to $\{\be_{k-1}\}$ as $R\to\infty$.

\medskip

For d), the intervals~$I_{n_0\ldots n_R}$ are clearly closed and
non-trivial since $\bn\mapsto\beta(\bn)$ is strictly increasing. They
are mutually disjoint because
$n_0\,\ldots\,n_{R-1}\,(n_R+1)\,\overline{0}$ and
$n_0\,\ldots\,n_{R-1}\,n_R\,\infty$ are consecutive elements
of~$\cN$. By Theorem~\ref{thm:extreme-points}
and~(\ref{eq:infinity-LF}), $\DF(\beta) =
\DF(n_0\,\ldots\,n_{R-1}\,(n_R+1)\,\overline{0})$ is a constant
polytope on each interval. That the union of the intervals has
Lebesgue measure~1 is a consequence of
Theorem~\ref{thm:polytope-typical} below.

\medskip

Now suppose that $\beta$ is in the complement~$\cX$ of the union of
these intervals. Then, by Lemma~\ref{lem:gaps}, for every $n_0\ldots
n_R$, either we have $S(n_0\,\ldots\,n_{R-1}\,(n_R+1)\,\overline{0}) >
w_\beta$, or there is some $\bm\in\cN$ with
$S(n_0\,\ldots\,n_{R-1}\,n_R\,\infty) < S(\bm) < w_\beta$. Therefore
$\bn(w_\beta)$ is not of rational or finite type and, using
Lemma~\ref{lem:gaps} again, $w_\beta = S(\bn(w_\beta))$. Therefore the
image of~$\cX$ under $\beta\mapsto w_\beta$ is contained in the set of
infimax sequences which are not of rational or finite type. On the
other hand, every such sequence~$S(\bn)$ is equal to $w_{\beta(\bn)}$
where $\beta(\bn)\in\cX$. Since $\beta\mapsto w_\beta$ is strictly
increasing, it follows that it is a bijection from~$\cX$ to the set of
infimax sequences which are not of rational or finite type. Moreover,
by Theorem~\ref{thm:extreme-points}, $\DF(\beta)$ is not a polytope
for $\beta\in\cX$.

If $\beta,\gamma\in\cX$ with $\beta<\gamma$, then there is some $\bn$ of
rational type with \mbox{$w_\beta < S(\bn) < w_\gamma$}, and hence there is
some $\beta'$ between $\beta$ and $\gamma$ with $\DF(\beta')$
a polytope. This establishes the injectivity of $\DF$ on $\cX$.

\medskip

 By parts~d) and~e), collapsing each interval $I_{n_0\ldots n_R}$ to a
 point gives a compact interval on which $\DF$ descends to a
 continuous injection, so that the image of $\DF$ is a compact
 interval as required.
\end{proof}

\begin{remark}
\label{rmk:order}
One way to see the effect of exceptional elements on digit frequency
sets is to define a {\em forcing relation} $\le$ on~$\Delta$ by
\[\bal \le \bal' \iff \,\forall\beta\in(k-1,k),\,\,\, \bal'\in\DF(\beta)
\implies \bal\in\DF(\beta).\] By Lemma~\ref{lem:expansion-translate}
and~(\ref{eq:DFn}), we have
\[\bal\le\bal' \iff \Phi(\bal) \le \Phi(\bal')\]
(if $\Phi(\bal) > \Phi(\bal')$ then pick $\bn\in\cN$ with
$\Phi(\bal')<\bn<\Phi(\bal)$ and let $\beta = \beta(\bn)$: then
$\bal'\in \DF(\beta)$ but $\bal\not\in\DF(\beta)$).

The relation $\le$ is therefore reflexive, transitive, and total, but
is not antisymmetric when $k\ge 3$. In order to make it into a total
order, it is necessary to identify each exceptional
simplex in~$\Delta$ to a point.
\end{remark}

\begin{example} %Calculations checked 30-7-13
\label{ex:k=2}
Some parts of Theorem~\ref{thm:beta-exp-props} are not true in the
case~$k=2$, when $\DF(\beta)$ is a subset of the interval $\Delta =
\{(\alpha_0,\alpha_1)\in \R^2_{\ge0}\,:\,\alpha_0+\alpha_1=1\}$, which
we identify with $[0,1]$ using the homeomorphism
$(\alpha_0,\alpha_1)\mapsto \alpha_1$. Since $\DF(\beta)$ is compact
and convex, and $0 = \delta(d_\beta(0))\in\DF(\beta)$ for all
$\beta\in(1,2)$, the set $\DF(\beta) = [0,\rhe(\beta)]$ is determined
by its right hand endpoint $\rhe(\beta)$, which is the digit frequency
of the Sturmian sequence $S(\bn(d_\beta(1)))$.

Figure~\ref{fig:k=2} is a graph of $\rhe(\beta)$ against $\beta$,
showing how $\rhe(\beta)$ locks on each rational value. For
instance, the itinerary of the point $(1/2,1/2)\in\Delta$ is
$1\,\overline{0}$, so that $\rhe(\beta) = 1/2$ if and only if
$\beta\in I_0 = [\beta(1\,\overline{0}), \beta(0\,\infty)]$; that is,
if and only if $w_\beta\in[S(1\,\overline{0}), S(0\,\infty)] =
[\overline{10}, 1\overline{10}]$.

 Now $w_\beta = \overline{10}$ when $\beta^2-\beta-1 = 0$, and
 $w_\beta = 1\overline{10}$ when
 $\beta^3-\beta^2-2\beta+1=0$. This gives the endpoints (approximately
 1.618 and 1.802) of the interval $\{\beta\in(1,2)\,:\,\rhe(\beta) =
 1/2\}$.

\begin{figure}[htbp]
\begin{center}
\includegraphics[width=0.4\textwidth]{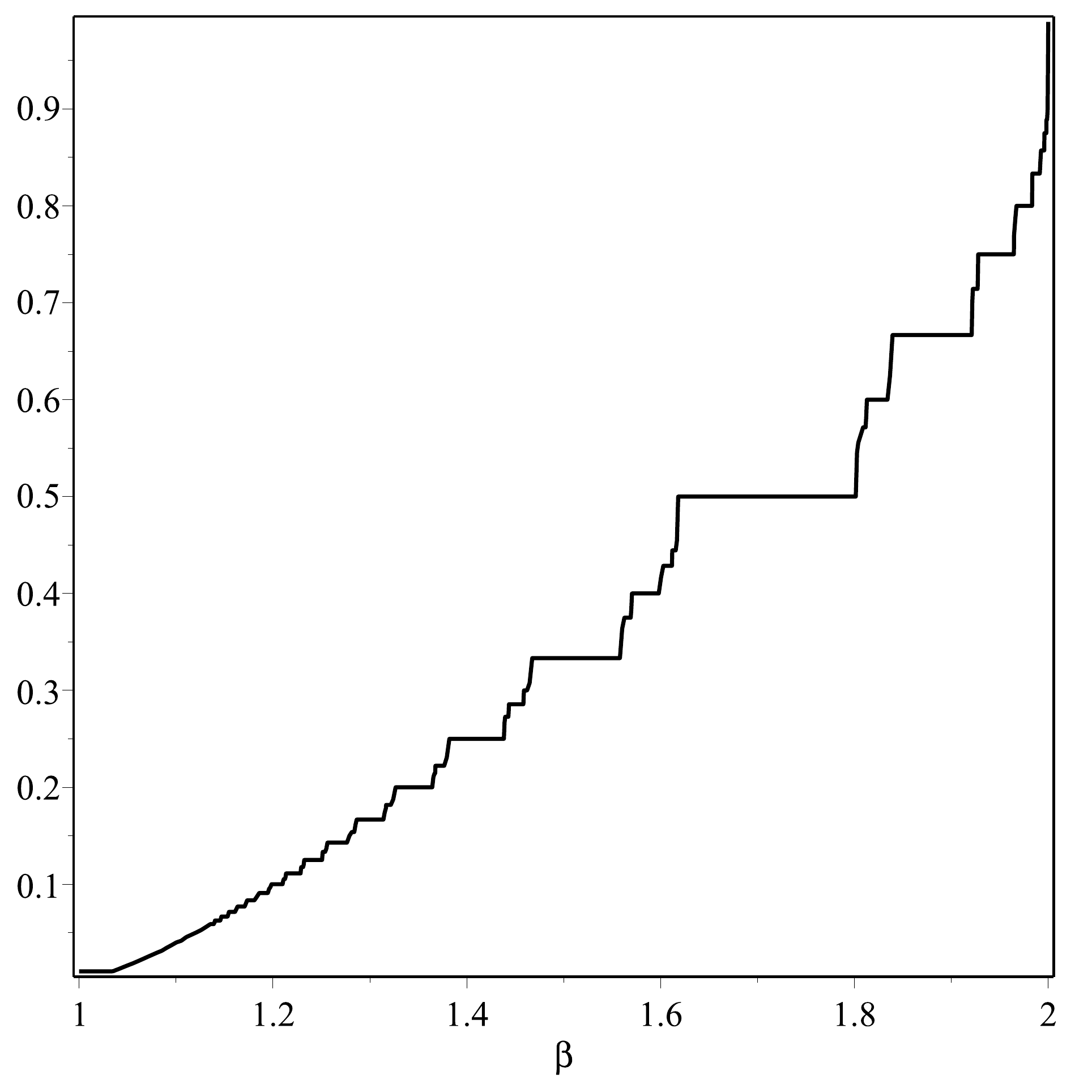}
\caption{The right hand endpoint of $\DF(\beta)$ when $k=2$}
\label{fig:k=2}
\end{center}
\end{figure}
\end{example}

\begin{example}
\label{ex:explicit-beta} %Calculations checked (using maple routines)
                         %30-7-13. 
Here we do analogous calculations to those of Example~\ref{ex:k=2} in
the cases $k=3$ and~$k=4$ (compare with Example~\ref{ex:ep} and
Figure~\ref{fig:short}). When $k=3$ we have $\DF(\beta) =
\DF(2\,1\,0\,1\,\overline{0})$ if and only if $\beta\in
I_{2\,1\,0\,0}$, i.e. if and only if
\[w_\beta \in [S(2\,1\,0\,1\,\overline{0}),\,\, S(2\,1\,0\,0\,\infty)] =
   [\overline{200120011},\,\, 2001\overline{20012000}].\]

 Now $w_\beta = \overline{200120011}$ when $\beta \simeq 2.190055$ (a
 root of $\beta^9-2\beta^8-\beta^5-2\beta^4-\beta-2$), and \mbox{$w_\beta =
 2001\overline{20012000}$} when $\beta\simeq 2.19019$ (a root of
 $\beta^{12}-2\beta^{11}-\beta^8-2\beta^7-2\beta^4+1$). Thus
 $\DF(\beta)$ locks on the pentagon of Example~\ref{ex:ep}a) and
 Figure~\ref{fig:short} for~$\beta$ between these two values.

On the other hand, when~$k=4$ we have $\DF(\beta) =
\DF(2\,1\,0\,1\,\overline{0})$ if and only if
\[
w_\beta \in [S(2\,1\,0\,1\,\overline{0}),\,\, S(2\,1\,0\,0\,\infty)] =
[\overline{30013000},\,\, 300\overline{13001}],
\]
 so that the polyhedron with 7 vertices of Example~\ref{ex:ep}b) is
 equal to $\DF(\beta)$ for $\beta$ between roots of
 $\beta^8-3\beta^7-\beta^4-3\beta^3-1$ and
 $\beta^8-3\beta^7-\beta^4-4\beta^3+3\beta^2-1$ (approximately 3.0688
 and 3.0690).
\end{example}

\subsection{Typical phenomena}
\label{sec:typical}
In this section we shall show that, from the point of view of the
parameter~$\beta$, the typical digit frequency set is of rational type
(that is, a polytope with rational vertices). By constrast, we then
show that from the point of view of the digit frequency sets
themselves, the generic example is non-rational and
regular (that is, having a single accumulation of rational vertices);
and moreover, the non-rational extreme point is generically {\em totally
  irrational} (its components are independent over the rationals).

\begin{theorem}
\label{thm:polytope-typical}
Let~$k\ge 3$. Then $\DF(\beta)$ is a polytope with rational vertices
for Lebesgue a.e. \mbox{$\beta\in(k-1,k)$}. 
\end{theorem}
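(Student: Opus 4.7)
The plan is to show that $\cX := (k-1,k) \setminus \cY$ has Lebesgue measure zero, where $\cY = \bigcup_{n_0,\ldots,n_R} I_{n_0\ldots n_R}$; by Theorems~\ref{thm:beta-exp-props}(d) and~\ref{thm:extreme-points}(a), this is equivalent to the claim. The approach is a Lebesgue density argument: it suffices to exhibit a constant $c > 0$, depending only on $k$, such that the upper density of $\cY$ at every $\beta_0 \in \cX$ is at least $c$, for then no $\beta_0 \in \cX$ can be a density-$1$ point of $\cX$, forcing $M(\cX) = 0$.

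Fix $\beta_0 \in \cX$ and set $\bn := \bn(w_{\beta_0}) \in \N^\N$, necessarily of irrational type, so $n_R \geq 1$ for infinitely many $R$. For each such $R$, the four elements of $\cN$
\[
\bp_1 := n_0\ldots n_{R-1}(n_R{+}1)\overline{0},\ \ \bp_2 := n_0\ldots n_{R-1}n_R\infty,\ \ \bp_3 := n_0\ldots n_{R-1}n_R\overline{0},\ \ \bp_4 := n_0\ldots n_{R-1}(n_R{-}1)\infty
\]
satisfy $\bp_1 < \bp_2 < \bn < \bp_3 < \bp_4$ in the reverse-lexicographic order on $\cN$, hence $\beta(\bp_1) < \beta(\bp_2) < \beta_0 < \beta(\bp_3) < \beta(\bp_4)$. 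Thus the window $W_R := (\beta(\bp_2),\beta(\bp_3))$ contains $\beta_0$ and the mode-locking interval $I_{n_0\ldots n_{R-1}(n_R-1)} = [\beta(\bp_3),\beta(\bp_4)]$ lies immediately to its right. Continuity of $S$ (Lemma~\ref{lem:S-op-homeo}) combined with continuity of $\beta \mapsto w_\beta$ gives $|W_R| \to 0$ as $R \to \infty$ through indices with $n_R \geq 1$.

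The heart of the proof is then the uniform estimate
\[
\beta(\bp_4) - \beta(\bp_3) \;\geq\; c\bigl(\beta(\bp_3) - \beta(\bp_2)\bigr)
\]
for some $c>0$ depending only on $k$. Once this is granted, for any $R$ with $n_R \geq 1$ the $|W_R|$-neighbourhood of $\beta_0$ contains a mode-locking interval of length at least $\min(c,1)\,|W_R|$, placing a uniform positive lower bound on the upper $\cY$-density at $\beta_0$. I would reduce the uniform estimate to a level-zero comparison: the sequences $S(\bp_2), S(\bp_3), S(\bp_4)$ share the common prefix $\Lambda_{n_0}\circ\cdots\circ\Lambda_{n_{R-1}}\bigl((k{-}1)\,0^{n_R-1}\bigr)$, and their tails reduce to the three explicit sequences $(k{-}1)\,0^{n_R}\,\overline{1}$, $\overline{(k{-}1)\,0^{n_R}}$, and $(k{-}1)\,0^{n_R-1}\,\overline{1}$. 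Direct inspection shows $S(\bp_3)$--$S(\bp_2)$ first disagree at position $n_R+1$ and $S(\bp_4)$--$S(\bp_3)$ first disagree at position $n_R$, giving a ratio comparable to $\beta \geq k-1$ in the cylinder-adapted metric on $\Sigma$.

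The main obstacle is the transfer of this $w$-space ratio to the desired $\beta$-space ratio. This requires a bounded-distortion estimate for the greedy-expansion bijection $\beta \leftrightarrow w_\beta$: on each cylinder of sequences sharing a length-$r$ prefix the dependence of $\beta$ on later digits has derivative uniformly comparable to $\beta^{-r}$, as can be tracked by differentiating the identity $1 = \sum_{r\geq 0} w_r\,\beta^{-(r+1)}$ with respect to $\beta$ and estimating the resulting tail series. Carrying out this distortion control with constants independent of the shared prefix $n_0\ldots n_{R-1}$ and of $n_R$ is the main technical step; given it, the level-zero separation ratio transfers to yield the uniform constant $c$ and the proof closes via Lebesgue density.
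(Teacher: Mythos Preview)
Your approach is entirely different from the paper's. The paper invokes Schmeling's theorem that for Lebesgue-a.e.\ $\beta$ the sequence $w_\beta$ has digit frequency equal to the normal (Parry) frequency $\bp(\beta)$, and then shows directly that whenever $\DF(\beta)$ fails to be a polytope one has $w_\beta=\cI(\bal)$ for some non-rational $\bal$ with $\alpha_{k-1}/\alpha_0>1/(n_0+1)$, while an elementary estimate on the Parry density gives $p_{k-1}(\beta)/p_0(\beta)\le\beta-(k-1)<1/(n_0+1)$. Hence $\bal\ne\bp(\beta)$, so $w_\beta$ cannot have frequency $\bp(\beta)$, and by Schmeling this occurs only on a null set. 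No porosity or distortion control is needed.

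Your density scheme is a reasonable self-contained alternative, but the proposal is incomplete and the sketch contains an error. The positions $n_R$ and $n_R+1$ you record are the first-disagreement positions of the \emph{pre-substitution} arguments $(k{-}1)0^{n_R}\overline{1}$, $\overline{(k{-}1)0^{n_R}}$, $(k{-}1)0^{n_R-1}\overline{1}$; they are not the first-disagreement positions of $S(\bp_2),S(\bp_3),S(\bp_4)$ in $\Sigma$, which is what governs the $\beta$-metric. The substitution $\Lambda_{n_0}\circ\cdots\circ\Lambda_{n_{R-1}}$ stretches these gaps by the word-lengths $L_{R-1}^{(i)}$ and can also create \emph{further} common prefixes: for $k=3$, after one $\Lambda_m$ the images of $0$ and $1$ are $1$ and $20^{m+1}$, and after a further $\Lambda_n$ they become $20^{n+1}$ and $20^n1^{m+1}$, now sharing a prefix of length $n+1$. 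So the ratio you write down is not the relevant one, and bounding the true ratio uniformly in $n_0\ldots n_{R-1}$ and $n_R$ requires controlling both the substitution combinatorics and the digit-gap at the actual first disagreement. The derivative bound $\beta^{r-1}\le\tfrac{d}{d\beta}f_\beta^r(1)\le\beta^r/(\beta-1)$ you allude to does give bounded distortion on cylinders, so the programme is plausible, but as written the uniform constant $c$ is not established.

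The trade-off: the paper's proof is short because the measure theory is outsourced to Schmeling's result; your approach, once the combinatorics and distortion are nailed down, would be longer but self-contained.
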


\begin{proof} %Checked 30-7-13
For each $\beta\in(k-1, k)$, let $\bp(\beta)\in\Delta$ be the {\em
  normal} digit frequency for $\beta$-expansions, which is realised by
$d_\beta(x)$ for Lebesgue a.e. $x\in[0,1]$.  It is given by
\[p_i(\beta) = \int_{i/\beta}^{(i+1)/\beta} h_\beta 
\qquad \text{for $0\le i < k-1$, \quad and} \qquad p_{k-1}(\beta) =
\int_{(k-1)/\beta}^{1} h_\beta 
,
\]
where $h_\beta\colon[0,1]\to\R^+$ is the density of Parry's
measure of maximal entropy~\cite{Parry}. Now $h_\beta$ is a decreasing
function, from which it follows that
\begin{equation}
\label{eq}
\frac{p_{k-1}(\beta)}{p_0(\beta)} \le \frac{1-(k-1)/\beta}{1/\beta} =
\beta - (k-1).
\end{equation}

By a theorem of Schmeling~\cite{Schmeling}, the sequence $w_\beta$ has
digit frequency $\bp(\beta)$ for Lebesgue a.e.\ $\beta$ in $(k-1,k)$. It
therefore suffices to prove that $w_\beta$ does {\em not} have digit
frequency $\bp(\beta)$ whenever $\DF(\beta)$ is not a polytope with
rational vertices.

Suppose therefore that $\DF(\beta)$ is not a polytope with rational
vertices, so that, by Theorem~\ref{thm:beta-exp-props}, $w_\beta =
\cI(\bal)$ for some~$\bal$ which is not of rational type. If the digit
frequency of $w_\beta$ exists then it is equal to $\bal$ by
Facts~\ref{facts:infifacts}f), so that it is only necessary to show
that $\bal\not=\bp(\beta)$.

Let $\bn = \Phi(\bal)$. Then $\bal\in \Delta_{n_0}$, which means by
definition that $\alpha_{k-1}/\alpha_0 > 1/(n_0+1)$ (we can assume
that $\alpha_0>0$, since otherwise it is immediate that
$\bal\not=\bp(\beta)$). We shall show that $1/(n_0+1) > \beta - (k-1)$,
which will establish the result by comparison with~(\ref{eq}). This
statement is immediate if~$n_0=0$, so we suppose $n_0\ge 1$.

Since $\bn < n_0\,\overline{0}$, we have $w_\beta = \cI(\bal) <
S(n_0\,\overline{0}) = \overline{(k-1)\,0^{n_0}}$, and hence
$\beta<\beta'$, where $w_{\beta'} = \overline{(k-1)\,0^{n_0}}$. Now $\beta'$
is the unique root in $(k-1,k)$ of the function $f(x) =
x^{n_0+1}-(k-1)x^{n_0}-1$. Since this function is increasing
in~$(k-1,k)$, showing that $f((k-1) + 1/(n_0+1)) \ge 0$ will establish
that $\beta < \beta' \le (k-1) + 1/(n_0+1)$.

Now
\begin{eqnarray*}
f\left(k-1 + \frac{1}{n_0+1}\right) &=& \left(k-1 +
\frac{1}{n_0+1}\right)^{n_0+1} - (k-1)\left(k-1 +
\frac{1}{n_0+1}\right)^{n_0}-1 \\
&=& \frac{1}{n_0+1}\left(k-1 + \frac{1}{n_0+1}\right)^{n_0} - 1\\
&>&\,\, \frac{(k-1)^{n_0}}{n_0+1} - 1 \,\,\ge\,\, \frac{2^{n_0}}{n_0+1}-1\,\, \ge\,\, 0
\end{eqnarray*}
as required.
\end{proof}

Before embarking on the proof that the generic digit frequency set is
non-rational and regular, we introduce some notation for the various
spaces which will be involved. We fix throughout the integer $k\ge 3$.

An element~$\bal$ of~$\Delta'$ is
said to be {\em completely irrational} if there is no non-trivial
relationship of the form $\sum_{i=0}^{k-1}m_i\alpha_i = 0$ for
integers~$m_i$. It is said to be of {\em infinite type} if every
component of $K^r(\bal)$ is strictly positive for all $r\ge 0$.

We define the following subsets of $\Delta'$:
\begin{eqnarray*}
\Rat_\Delta &=& \mbox{the set $\Delta'\cap\Q^k$ of rational elements},\\
\CI_\Delta &=& \mbox{the set of completely irrational elements},\\
\In_\Delta &=&
\mbox{the set of infinite type elements},\\
\Reg_\Delta &=& \mbox{the set of regular elements, and}\\
\cO_\Delta &=& \mbox{the set of elements whose itinerary contains
  infinitely many words $1^{2k-3}$.}
\end{eqnarray*}

The images of these sets under the itinerary map
$\Phi\colon\Delta'\to\N^\N$ are denoted with subscripts~$S$ (for
``sequence''). Thus, for example, $\Rat_S = \Phi(\Rat_\Delta)$ is the
set of elements of $\N^\N$ which end~$\overline{0}$; $\cO_S =
\Phi(\cO_\Delta)$ is the set of elements of $\N^\N$ which contain
infinitely many distinct words $1^{2k-3}$; and, by
Facts~\ref{facts:infifacts}i), $\In_S = \Phi(\In_\Delta)$ is the set
of elements~$\bn$ of $\N^\N$ which have the property that, for
all~$r\ge0$, there is some~$s\ge0$ with
$n_{r+s(k-1)}\not=0$. Facts~\ref{facts:infifacts}h) states that
$\cO_S\subset\Reg_S$ is a dense $G_\delta$ subset of $\N^\N$.

Let $\cD = \DF(\N^\N\setminus\{\overline{0}\})$ be the set of all digit frequency sets for
$\beta\in(k-1,k)$ with the Hausdorff topology, which is homeomorphic
to an open interval. The images of the above subsets of $\N^\N$ under
$\DF$ will be denoted with a subscript~$D$ (for ``digit''). Thus, for
example, $\Rat_D$ is the set of digit frequency sets which are
polytopes, and $\Reg_D$ is the set of digit frequency sets which are
either polytopes or have a single non-rational extreme point.

The complements of these sets in $\Delta'$, in $\N^\N$, or in $\cD$,
as appropriate, are denoted with a superscript~$c$. Thus, for example,
$\Rat^c_D$ is the set of non-polytope digit frequency sets.

Recall that a function $f\colon X\to Y$ is called {\em quasi-open} if,
for every open set $U\subset X$, the image $f(U)\subset Y$ has
interior.

\begin{lemma} \mbox{}
\label{lem:typical-props}
\begin{enumerate}[a)]
\item $\CI_\Delta$ and $\In_\Delta$ are dense $G_\delta$ subsets of $\Delta'$, with
  $\CI_\Delta \subset \In_\Delta$.
\item The itinerary map $\Phi\colon\Delta'\to\N^\N$ is
  quasi-open; its restriction
  $\Phi\colon\In_\Delta\to\In_S$ is continuous; and its restriction
  $\Phi\colon\In_\Delta\cap\,\cO_\Delta\to\In_S\cap\,\cO_S$ is a
  homeomorphism.
\item $\In_S$ is a dense $G_\delta$ subset of $\N^\N$.
\item The restriction $\DF\colon \Rat^c_S \to Rat^c_D$ is a homeomorphism.
\end{enumerate}
\end{lemma}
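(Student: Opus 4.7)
The plan is to dispatch the four parts in turn. For~(a), I would realise~$\CI_\Delta$ as the intersection over nonzero integer vectors $\bm\in\Z^k\setminus\{0\}$ of the complements in~$\Delta'$ of the hyperplanes $\{\bal:\bm\cdot\bal=0\}$, each closed and lower dimensional, so~$\CI_\Delta$ is dense~$G_\delta$ by Baire. For~$\In_\Delta$, Facts~\ref{facts:infifacts}i) expresses $\Delta'\setminus\In_\Delta$ as a countable union, over $r$ and over the branches of~$K^r$, of the codimension-one level sets $\{K^r(\bal)_i=0\}$; each is closed in its branch and nowhere dense, so~$\In_\Delta$ is likewise dense~$G_\delta$. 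The inclusion $\CI_\Delta\subset\In_\Delta$ follows from the fact that~$K$ preserves complete irrationality: clearing denominators in a hypothetical relation $\sum_jm_jK_n(\bal)_j=0$ using the explicit formula for~$K_n$ produces a nontrivial integer relation on the components of~$\bal$ unless all the~$m_j$ vanish. Inductively, every $K^r(\bal)$ with $\bal\in\CI_\Delta$ is then completely irrational, so in particular has no zero coordinates.

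For~(b), to prove quasi-openness of~$\Phi$, I would pick a rational point~$\bal$ in a given nonempty open $U\subset\Delta'$. Then $\Phi^{-1}(\Phi(\bal))=\{\bal\}$ by Facts~\ref{facts:infifacts}f), and the nested simplices~$A_{\bn,r}$ of Section~\ref{sec:converge} converge Hausdorff to~$\{\bal\}$, so $A_{\bn,r}\subset U$ for some~$r$. Any $\bm\in\N^\N$ with $\Word{\bm}{r+1}=\Word{\bn}{r+1}$ has a preimage under~$\Phi$ in $\Upsilon_{\bn,r}(\Delta')\subset A_{\bn,r}\subset U$, so this cylinder lies in~$\Phi(U)$; a direct estimate with the metric shows cylinders are open in~$\N^\N$. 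Continuity of~$\Phi$ on~$\In_\Delta$ is immediate from the fact that~$K$ is continuous at every point with all coordinates positive: inductively, $K^s$ is continuous at each $\bal\in\In_\Delta$, so initial itinerary segments of~$\bal$ agree with those of any sufficiently close~$\bbeta$. The restriction $\Phi\colon\In_\Delta\cap\cO_\Delta\to\In_S\cap\cO_S$ is then a continuous bijection (bijectivity from Facts~\ref{facts:infifacts}h) and~i)); continuity of the inverse follows by compactness, since the unique preimage~$\bal^{(j)}$ of $\bn^{(j)}\in\In_S\cap\cO_S$ lies in $A_{\bn,r}$ whenever $\bn^{(j)}$ agrees with~$\bn$ on its first~$r+1$ entries, so any subsequential limit of $(\bal^{(j)})$ lies in $\bigcap_r A_{\bn,r}=\Phi^{-1}(\bn)$.

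For~(c), $\N^\N\setminus\In_S=\bigcup_{r\ge0}\bigcap_{s\ge0}\{\bn:n_{r+s(k-1)}=0\}$ is visibly~$F_\sigma$, so~$\In_S$ is~$G_\delta$; density follows by modifying any given~$\bn$ past a sufficiently long prefix so that every tail entry equals~$1$, which places the modified sequence in~$\In_S$ and keeps it arbitrarily close to~$\bn$. For~(d), injectivity of $\DF$ on~$\Rat^c_S$ is immediate from Corollary~\ref{cor:almost-injective}, since no two elements of $\Rat^c_S\subset\N^\N$ form a rational--finite pair; surjectivity holds because any non-polytope $\DF(\bn)$ forces~$\bn$ to be neither of rational type (Theorem~\ref{thm:extreme-points}a)) nor of finite type (by~(\ref{eq:infinity-LF}), its rational partner would yield the same polytope), so $\bn\in\Rat^c_S$. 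Continuity is inherited from Theorem~\ref{thm:continuous}, and continuity of the inverse follows by compactness of~$\cN$: any subsequential limit~$\bm$ of $(\bn^{(j)})$ with $\DF(\bn^{(j)})\to\DF(\bn)$ satisfies $\DF(\bm)=\DF(\bn)\in\Rat^c_D$, whence $\bm\in\Rat^c_S$ by the surjectivity argument and $\bm=\bn$ by injectivity.

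The main technical hurdle I anticipate is the quasi-openness of~$\Phi$, which requires carefully matching the geometric simplices~$A_{\bn,r}$ to the cylinder sets forming a basis for the topology on~$\N^\N$; once this is in place, the remaining arguments are largely formal consequences of Facts~\ref{facts:infifacts} and the Hausdorff convergence results of Section~\ref{sec:converge}.
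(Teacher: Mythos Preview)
Your approach mirrors the paper's closely, and parts~(a), (c), and~(d) are correct. In~(d), your subsequential-limit argument for continuity of the inverse, using compactness of~$\cN$, is a clean alternative to the paper's direct proof of openness via the order structure on~$\cN$.

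However, there is a genuine error in your argument for continuity of~$\Phi$ on~$\In_\Delta$ in part~(b). You assert that ``$K$ is continuous at every point with all coordinates positive,'' and this is false. The discontinuity locus of~$K$ consists of the hyperplanes $\alpha_0 = n\alpha_{k-1}$ (the boundaries between the~$\Delta_n$), and these certainly meet the interior of~$\Delta$: for instance, with $k=3$, the point $(1/2,1/4,1/4)$ has all coordinates positive but lies on $\alpha_0 = 2\alpha_2$, and the one-sided limits $K_1(1/2,1/4,1/4) = (1/2,1/2,0)$ and $K_2(1/2,1/4,1/4) = (1/2,0,1/2)$ differ.

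The correct link between~$\In_\Delta$ and the discontinuities of~$K$ is one step removed: if $K^r(\bal)$ lies on a discontinuity hyperplane $\alpha_0 = n\alpha_{k-1}$, then the formula for~$K_n$ forces $K^{r+1}(\bal)_{k-2} = 0$, so $\bal\notin\In_\Delta$. Hence these hyperplanes are contained in~$\In_\Delta^c$, and for $\bal\in\In_\Delta$ every iterate $K^s(\bal)$ avoids them; this is what drives the inductive continuity argument. This is exactly the paper's route; once you replace your incorrect premise with this observation, the rest of your proof of~(b) goes through, including your compactness argument for continuity of the inverse on $\In_\Delta\cap\cO_\Delta$, which is a valid alternative to the paper's direct openness proof.
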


\begin{proof}
\mbox{}
\begin{enumerate}[a)]
\item $\CI_\Delta$ is the countable intersection of the open dense
  subsets $\sum_{i=0}^{k-1}m_i\alpha_i\not=0$ of the Baire
  space~$\Delta'$, and is therefore dense $G_\delta$. Similarly for
\[\In_\Delta = \bigcap_{R\ge 0}\,\,\bigcap_{n_0,\ldots,n_R\in\N} \left(
\Delta' \setminus K_{n_0}^{-1}\circ \cdots \circ K_{n_R}^{-1}(\partial
\Delta)\right),\] where $\partial\Delta$ is the union of the faces of
$\Delta$.

The image of a
  completely irrational vector under a projective homeomorphism with
  integer coefficients is again completely irrational, and in
  particular has no zero components, from which it
  follows that $\CI_\Delta\subset \In_\Delta$. 
%% Finally, every rational
%%   element $\bal$ of $\Delta$ has $K^r(\bal) = \be_{k-1}$ for some~$r$,
%%   so that $\In_\Delta \subset \Rat^c_\Delta$.

\item Let~$U\subset\Delta'$ be open, pick $\bal\in U\cap\Rat_\Delta$,
  and write $\bn=\Phi(\bal)$. Because
  \mbox{$\Rat_\Delta\subset\Reg_\Delta$}, we have $A_{\bn, r}\subset
  U$ for sufficiently large~$r$. Therefore $\Phi(U)$ contains the
  (open) cylinder set determined by the block $n_0\,\ldots\,n_r$ for
  sufficiently large~$r$, establishing that $\Phi$ is quasi-open as
  required.

To show that the restriction $\Phi\colon\In_\Delta\to\In_S$ is
continuous, observe that the hyperplanes \mbox{$\alpha_0 =
  n\alpha_{k-1}$} on which $K\colon\Delta'\to\Delta'$ is discontinuous
are contained in $\In^c_\Delta$. It follows that if
$\bal\in\In_\Delta$ and $r>0$, then there is a neighbourhood~$U$ of
$\bal$ in $\Delta'$ such that $K^s(U)$ is disjoint from these 
hyperplanes for $0\le s\le r$. Then $\Phi(U)$ is contained in the
cylinder set determined by the first~$r$ symbols of $\Phi(\bal)$,
which establishes continuity.

In particular, the restriction $\Phi\colon\In_\Delta\cap\,\cO_\Delta \to
\In_S\cap\,\cO_S$ is continuous. It is also bijective because points of
$\Phi(\cO_\Delta) = \cO_S$ have unique preimages under~$\Phi$.  It
therefore only remains to show that it is open. For this it is
required to show that if $U\subset\Delta'$ is open and $\bn\in\Phi(U
\cap \In_\Delta\cap\,\cO_\Delta)$, then there is an open subset~$V$ of
$\N^\N$ with $\bn\in V\cap\In_S\cap\,\cO_S \subset \Phi(U\cap
\In_\Delta\cap\, \cO_\Delta)$. As above, the fact that $\bn$ is regular
means that $A_{\bn,r}\subset U$ for sufficiently large~$r$, so that
the cylinder set~$V$ determined by $n_0\,\ldots\,n_r$ satisfies
$V\subset \Phi(U)$. Therefore
\[V\cap\In_S\cap\, \cO_S \,\subset\, \Phi(U) \cap \Phi(\In_\Delta) \cap
\Phi(\cO_\Delta) \,=\, \Phi(U\cap\In_\Delta\cap\,\cO_\Delta),\] with the
final equality holding since points of $\Phi(\cO_\Delta)$ have a
unique $\Phi$-preimage in~$\Delta'$. This completes the proof, since
clearly $\bn\in V\cap\In_S\cap \,\cO_S$.

\item  
\[\In_S = \bigcap_{r=0}^{\infty}
\{\bn\in\N^\N\,:\,n_{r+s(k-1)} \not = 0 \text{ for some }s\ge 0\},\]
a countable intersection of open dense subsets of the Baire
space~$\N^\N$. 

\item $\DF\colon\Rat^c_S \to \Rat^c_D$ is continuous by
  Theorem~\ref{thm:continuous}, injective by
  Corollary~\ref{cor:almost-injective}, and surjective by
  definition. It therefore only remains to show that it is open.
Since the cylinder sets form a basis for the topology of
  $\N^\N$, it suffices to show that for each cylinder set~$C$, the set
  $\DF(C\setminus \Rat_S)$ is open in $\Rat^c_D$. Suppose, then,
  that~$C$ is determined by the block $n_0\,\ldots\,n_r$. Write $\ell
  = n_0\,\ldots\,n_r\,\infty$ and $r =
  n_0\,\ldots\,n_r\,\overline{0}$, so that $C = [\ell,r]_\cN \cap
  \N^\N$, where $[a,b]_\cN := \{\bn\in\cN\,:\,a\le \bn\le b\}$. Since
  $\DF$ is continuous and order-preserving on~$\cN$ and $\DF(\ell),
  \DF(r)\in \Rat_D$, we have that
\[\DF(C\setminus\Rat_S) = [\DF(\ell), \DF(r)]_D \setminus \Rat_D =
(\DF(\ell), \DF(r))_D \setminus \Rat_D\] 
is open in $\Rat^c_D$ as required.
\end{enumerate}

\end{proof}

\medskip\medskip\medskip\medskip

We can now prove that a generic digit frequency set has a
single limiting extreme point which is completely irrational.

\medskip\medskip

\begin{theorem}
\label{thm:typical-regular}
The set $\Reg_D\cap\CI_D$ contains a dense $G_\delta$ subset
of~$\cD$.
\end{theorem}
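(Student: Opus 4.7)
The plan is to exhibit a dense $G_\delta$ subset of $\cD$ contained in $\Reg_D\cap\CI_D$ by transporting the set
\[A := \CI_\Delta\cap\In_\Delta\cap\cO_\Delta \subset \Delta'\]
successively through the homeomorphism $\Phi\colon\In_\Delta\cap\cO_\Delta\to\In_S\cap\cO_S$ of Lemma~\ref{lem:typical-props}b) and the homeomorphism $\DF\colon\Rat^c_S\to\Rat^c_D$ of Lemma~\ref{lem:typical-props}d).

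First I would establish that $A$ is dense $G_\delta$ in $\Delta'$. Both $\CI_\Delta$ and $\In_\Delta$ are dense $G_\delta$ by Lemma~\ref{lem:typical-props}a). Since $\cO_S$ is visibly $G_\delta$ in $\N^\N$ (a countable intersection of unions of cylinder sets) and $\Phi|_{\In_\Delta}$ is continuous, $\cO_\Delta\cap\In_\Delta$ is $G_\delta$ in $\In_\Delta$, and therefore in $\Delta'$. For density of $\cO_\Delta\cap\In_\Delta$ in $\Delta'$, I would take any nonempty open $U\subset\Delta'$ and use quasi-openness of $\Phi$ to find a cylinder set $C\subset\Phi(U)$; since the dense $G_\delta$ set $\In_S\cap\cO_S$ of $\N^\N$ meets $C$, I pick $\bn\in C\cap\In_S\cap\cO_S$, and regularity of $\bn$ (as $\cO_S\subset\Reg_S$) guarantees that its unique $\Phi$-preimage lies in $U$. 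Baire's theorem then gives that $A$ is dense $G_\delta$ in $\Delta'$.

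Next I would push $A$ through $\Phi$. Since $\In_\Delta\cap\cO_\Delta$ is $G_\delta$-dense in the Baire space $\Delta'$, it is itself Baire, and $A$ is dense $G_\delta$ in it. The homeomorphism $\Phi$ sends $A$ to a dense $G_\delta$ subset of $\In_S\cap\cO_S$. As $\In_S\cap\Rat_S=\emptyset$ (rational-type itineraries end in $\overline{0}$, contradicting the defining condition of $\In_S$), we have $\Phi(A)\subset\Rat^c_S$; and since $\In_S\cap\cO_S$ is dense $G_\delta$ in $\N^\N$, hence in the subset $\Rat^c_S$, the same is true of $\Phi(A)$. Applying the homeomorphism $\DF\colon\Rat^c_S\to\Rat^c_D$ then makes $\DF(\Phi(A))$ dense $G_\delta$ in $\Rat^c_D$; and as $\Rat_D$ is countable (being the image of the countable set $\Rat_S$) and $\cD$ is homeomorphic to an interval, $\Rat^c_D$ is dense $G_\delta$ in $\cD$, so $\DF(\Phi(A))$ is too.

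Finally, $\DF(\Phi(A))\subset\Reg_D\cap\CI_D$ by construction: $A\subset\cO_\Delta\subset\Reg_\Delta$ gives $\Phi(A)\subset\Reg_S$ and so $\DF(\Phi(A))\subset\Reg_D$, while $A\subset\CI_\Delta$ gives $\Phi(A)\subset\CI_S$ and so $\DF(\Phi(A))\subset\CI_D$. The main obstacle is the density of $\cO_\Delta\cap\In_\Delta$ in $\Delta'$: this is where one genuinely needs the quasi-openness of $\Phi$ together with uniqueness of $\Phi$-preimages at regular points, since $\Phi$ itself is only lower semi-continuous in general.
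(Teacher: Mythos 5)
Your argument is correct and follows essentially the same route as the paper: your set $A=\CI_\Delta\cap\In_\Delta\cap\cO_\Delta$ equals the paper's $\cO_\Delta\cap\CI_\Delta$ (since $\CI_\Delta\subset\In_\Delta$), and you establish its dense $G_\delta$ nature in $\Delta'$ and then transport it through the same two homeomorphisms $\Phi|_{\In_\Delta\cap\,\cO_\Delta}$ and $\DF|_{\Rat^c_S}$, using the same quasi-openness and countable-complement facts. No changes needed.
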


\begin{proof}
In this proof references a), b), c), and d) are to the parts of
Lemma~\ref{lem:typical-props}, while numerical references 1) and 2) are
to the following straightforward facts about subspaces $A\subset
B\subset X$ of a metric space~$X$:
\begin{enumerate}[1)]
\item If $A$ is a dense (respectively $G_\delta$) subset of
  $X$, then it is a dense (respectively $G_\delta$) subset of $B$.
\item If $B$ is a dense (respectively $G_\delta$) subset of
  $X$, and $A$ is a dense (respectively $G_\delta$) subset of $B$, then $A$
  is a dense (respectively $G_\delta$) subset of $X$.
\end{enumerate}

\medskip

By c) and Facts~\ref{facts:infifacts}h), $\cO_S\cap\In_S$ is a dense
$G_\delta$ subset of $\N^\N$. Because $\cO_S\subset\Reg_S$, its
preimage $\Phi^{-1}(\cO_S\cap\In_S)$ is equal to
$\cO_\Delta\cap\In_\Delta$.

Now $\cO_S\cap\In_S$ is a $G_\delta$ subset of $\In_S$ by 1), so by
the continuity of $\Phi\colon \In_\Delta\to\In_S$, its preimage
$\cO_\Delta\cap\In_\Delta$ is a $G_\delta$ subset of $\In_\Delta$, and
hence, by a) and 2), of $\Delta'$. On the other hand, it follows from
the quasi-openness of $\Phi\colon\Delta'\to\N^\N$ and the denseness of
$\cO_S\cap\In_S$ in $\N^\N$ that $\cO_\Delta\cap\In_\Delta$ is dense
in~$\Delta'$. That is, $\cO_\Delta\cap\In_\Delta$ is a dense
$G_\delta$ subset of $\Delta'$. By a),
$\cO_\Delta\cap\In_\Delta\cap\CI_\Delta = \cO_\Delta\cap\CI_\Delta$ is
also a dense $G_\delta$ subset of $\Delta'$.

In particular, by 1), $\cO_\Delta\cap\CI_\Delta$ is a dense
$G_\delta$ subset of  $\cO_\Delta\cap\In_\Delta$. Therefore, by~b),
$\Phi(\cO_\Delta\cap\CI_\Delta)$ is a dense
$G_\delta$ subset of $\cO_S\cap\In_S$, and so also of $\N^\N$ by 2);
but $\Phi(\cO_\Delta\cap\CI_\Delta) =
\Phi(\cO_\Delta)\cap\Phi(\CI_\Delta) = \cO_S\cap\CI_S$ since
$\cO_S\subset\Reg_S$. 

Now $\cO_S\cap\CI_S$ is a dense $G_\delta$ subset of $\Rat_S^c$ by 1),
and hence $\DF(\cO_S\cap\CI_S)$ is a dense $G_\delta$ subset of
$\Rat_D^c$ by d). Since $\Rat_D^c$ is a dense $G_\delta$ subset of
$\cD$ (it has countable complement), it follows by 2) that
the subset $\DF(\cO_S\cap \CI_S)$ of $\Reg_D\cap\CI_D$ is dense
$G_\delta$ in $\cD$ as required.
\end{proof}

The following lemma provides explicit elements of
$\Reg_S\cap\CI_S$, and hence of $\Reg_D\cap\CI_D$.

\begin{lemma}
\label{lem:explicit-CI}
Suppose that~$k\ge 3$. For every $n>0$, the element $\overline{n}$ of
$\N^\N$ lies in $\Reg_S\cap\CI_S$.
\end{lemma}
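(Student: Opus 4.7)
The plan is to realize $\Phi^{-1}(\overline n)$ as the normalized Perron eigenvector of an explicit non-negative integer matrix, and then to derive complete irrationality from the irreducibility of that matrix's characteristic polynomial. Since $K$ restricts to $K_n$ on $\Delta_n$, the preimage $\Phi^{-1}(\overline n)$ coincides with the set of fixed points of $K_n$ in $\overline{\Delta_n}$, and formula~(\ref{eq:KnI}) shows that $K_n^{-1}$ is the projectivization of the $k\times k$ non-negative integer matrix
\[
M_n \;=\;
\begin{pmatrix}
0 & 0 & \cdots & 0 & n+1 & n \\
1 & 0 & \cdots & 0 & 0 & 0 \\
0 & 1 & \cdots & 0 & 0 & 0 \\
\vdots & & \ddots & & & \vdots \\
0 & 0 & \cdots & 1 & 0 & 0 \\
0 & 0 & \cdots & 0 & 1 & 1
\end{pmatrix}.
\]
The associated digraph is strongly connected and carries a self-loop at vertex $k-1$, so $M_n$ is primitive. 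Perron--Frobenius then produces a unique positive eigenvector $\bv$ with Perron eigenvalue $\lambda>1$; the normalization $\bal$ of $\bv$ is the unique element of $\Phi^{-1}(\overline n)$, establishing $\overline n\in\Reg_S$.

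Solving $M_n\bv=\lambda\bv$ directly yields, up to positive scale, $v_{k-2-j}=\lambda^j$ for $0\le j\le k-2$ and $v_{k-1}=1/(\lambda-1)$, with $\lambda$ a root of
\[
p(x) \;=\; x^k - x^{k-1} - (n+1)\,x + 1.
\]
Any hypothetical integer relation $\sum_{i=0}^{k-1}m_i\alpha_i=0$ on the components of $\bal$ is equivalent to $\sum_i m_i v_i=0$; after multiplication by $\lambda-1$ this becomes the polynomial identity $(x-1)Q(x)+m_{k-1}=0$ at $x=\lambda$, where $Q(x)=\sum_{j=0}^{k-2}m_{k-2-j}x^j\in\Z[x]$ has degree at most $k-2$. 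The left-hand side is a polynomial of degree at most $k-1$ vanishing at $\lambda$; if the minimal polynomial of $\lambda$ over $\Q$ has the maximal degree $k$ -- equivalently, if $p$ is irreducible -- then this polynomial must be identically zero, forcing $Q\equiv 0$ and $m_{k-1}=0$, hence all $m_i=0$. This gives $\bal\in\CI_\Delta$ and completes the proof.

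The main obstacle is therefore to prove that $p(x)=x^k-x^{k-1}-(n+1)x+1$ is irreducible over $\Q$. For $n\ge 3$ this is immediate from Perron's classical irreducibility criterion applied to the reciprocal polynomial $q(x)=x^k p(1/x)=x^k-(n+1)x^{k-1}-x+1$, whose subleading coefficient satisfies $n+1>3=1+|{-}1|+|1|$. The residual cases $n=1,2$ require separate treatment: rational roots are excluded by the rational root theorem together with the direct computation $p(\pm 1)\neq 0$, and higher-degree integer factorizations $p=fg$ are ruled out by a bounded case analysis using the constraints $f(0)g(0)=1$, $f(1)g(1)=-n$, and $f(-1)g(-1)=n+2(-1)^k$, combined with the Perron--Frobenius information that $\lambda$ strictly dominates in modulus the other roots of $p$.
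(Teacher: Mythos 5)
Your setup coincides with the paper's: the same matrix (the Abelianization of $\Lambda_n$), the same eigenvector computation, the same polynomial $p(x)=x^k-x^{k-1}-(n+1)x+1$, the same reduction of complete irrationality to irreducibility of $p$ over $\Z$, and the same appeal to Perron's criterion for the reciprocal polynomial when $n\ge 3$. The regularity step is phrased a little differently (primitivity of the matrix rather than the boundedness/no-zeroes criterion), and apart from the imprecise assertion that $\Phi^{-1}(\overline{n})$ ``coincides with the set of fixed points of $K_n$'' --- a priori it is the maximal $K_n^{-1}$-invariant subset of $\closure{\Delta_n}$, and one needs the contraction supplied by primitivity (e.g.\ Birkhoff's theorem for the Hilbert metric, as in Lemma~\ref{lem:bounded-regular}) to see that the nested intersection $\bigcap_r (K_n^{-1})^r(\Delta)$ is a single point --- that part is sound.

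The genuine gap is the irreducibility of $p$ for $n=1,2$. Your proposed ``bounded case analysis'' from $f(0)g(0)=1$, $f(1)g(1)=-n$ and $f(-1)g(-1)$ (note $p(-1)=n+2+2(-1)^k$, not $n+2(-1)^k$) is not in fact bounded: the values of a hypothetical monic integer factor at three points neither determine nor bound its coefficients once its degree is at least $3$, and $k$ is arbitrary. Nor does Perron--Frobenius dominance of $\lambda$ help: it gives no upper bound below $1$ on the moduli of the remaining roots, and indeed $p$ has $k-2$ further roots of modulus greater than $1$ (for $k=3$, $n=1$ the roots are approximately $1.802$, $0.445$, $-1.247$), so you cannot conclude that the factor omitting $\lambda$ has constant term of modulus less than $1$. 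What is needed --- and what the paper supplies --- is a root-localization statement: a pointwise Rouch\'e comparison of $r(x)=x^kp(1/x)$ with $g(x)=-(n+1)x^{k-1}+1$ on the unit circle (one checks $|1-(n+1)w|>|w-1|$ for $|w|=1$ and $n\ge 1$) shows that $r$ has exactly $k-1$ roots in the open unit disk and hence exactly one root of modulus greater than $1$; since every nonconstant monic integer factor of $r$ has constant term of modulus at least $1$ and must therefore own that root, $r$ (hence $p$) is irreducible. Without some such localization your treatment of $n=1,2$ cannot be completed as described.
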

\begin{proof} %Checked 1-8-13
The itinerary $\overline{n}$ lies in $\Reg_S$ since it is
bounded and contains no zeroes. It is therefore only necessary to
prove that if $\Phi(\bal)=\overline{n}$, then $\bal$ is completely
irrational.

Let~$A$ be the $k$ by $k$ matrix which is the Abelianization of the
substitution~$\Lambda_n$: that is, $A_{0,k-2}=n+1$,
$A_{0,k-1}=n$, $A_{i,i-1}=1$ for $1\le i\le k-1$, $A_{k-1,k-1}=1$, and all
other entries are zero. Then $\bal=\bv/||\bv||_1$, where $\bv$ is the
Perron-Frobenius eigenvector for~$A$ normalized so
that $v_{k-1}=1$.

The eigenvector equation $A\bv = \lambda\bv$ gives $v_i =
\lambda^{k-2-i}(\lambda-1)$ for $0\le i\le k-2$. Therefore if
$m_i\in\Z$ for $0\le i\le k-1$, then $\sum_{i=0}^{k-1} m_iv_i$ is a
polynomial in $\lambda$ of degree at most $k-1$ with integer
coefficients. Hence if $\bal$ is not completely irrational, then the
degree of~$\lambda$ is less than~$k$, and therefore the characteristic
polynomial \mbox{$p(x) = x^k-x^{k-1}-(n+1)x+1$} of~$A$ is reducible
over~$\Z$. Conversely, if $q(\lambda)=0$ for some non-zero integer
polynomial $q(x) = \sum_{i=0}^{k-1}q_ix^i$ of degree less than~$k$,
then we can construct integers $m_i = \sum_{j=k-1-i}^{k-1} q_j$, not all
zero, with $\sum_{i=0}^{k-1} m_iv_i = 0$. Therefore $\bal$ is completely
irrational if and only if $p(x)$ is irreducible over~$\Z$.

Let $r(x) = x^k p(1/x) = x^k - (n+1)x^{k-1}-x+1$ which is irreducible
if and only if~$p$ is. The Perron criterion for irreducibility applies
to $r$ to give the required result when $n+1>3$. A slight variation in
the first part of the usual proof of Perron's criterion (for example,
in the proof of Theorem~2.2.5(a) of~\cite{polycite}, take $g(x) =
-(n+1)x^{k-1} + 1$ which has all $k-1$ roots in the unit disk and
satisfies $|g|>|r-g|$ on the unit circle, implying by Rouch\'e's
theorem that~$r$ also has $k-1$ roots in the unit disk) gives the
result for $n=1$ and $n=2$ also.
\end{proof}

\subsection{Generic smoothness at non-rational extreme points}
Figures~\ref{fig:cubes} and~\ref{fig:squares} suggest that, in the
case~$k=3$, non-rational (i.e. limiting) extreme points~$\bal$ of
digit frequency sets $\DF(\bn)$ are {\em smooth}: that is, that there
is a unique line~$L$ through~$\bal$ in the plane of $\DF(\bn)$ such
that $\DF(\bn)\setminus L$ is connected. In this section we show that
this property holds for all $\bn$ in $\cP_S$, the set of itineraries
which contain infinitely many words $1\,1\,1\,1\,1$. By arguments
analogous to those of Section~\ref{sec:typical}, $\cP_S$ and its
counterparts $\cP_\Delta$ and $\cP_D$ are generic subsets of $\N^\N$,
$\Delta$, and $\cD$ respectively. Note that, by
Facts~\ref{facts:infifacts}h), we have $\cP_S \subset\Reg_S$.

We restrict to the case~$k=3$ throughout. Let $\bal\in\Delta$ have
itinerary $\Phi(\bal) = \bn$. Recall from Section~\ref{sec:converge}
that we write, for each $r\ge 0$,
\begin{eqnarray*}
\Lambda_{\bn,r} &=&
\Lambda_{n_0}\circ\Lambda_{n_1}\circ\cdots\circ\Lambda_{n_r}, \\
\Upsilon_{\bn,r} &=& K_{n_0}^{-1}\circ K_{n_1}^{-1}\circ\cdots \circ
K_{n_r}^{-1} \colon \Delta\to\Delta, \qquad\text{ and}\\
A_{\bn,r} &=&\Upsilon_{\bn,r}(\Delta).
\end{eqnarray*}

Thus $A_{\bn,r}$ is a triangle, with rational vertices labelled
$\VV{r}{0}=\Upsilon_{\bn,r}(1,0,0)$,
$\VV{r}{1}=\Upsilon_{\bn,r}(0,1,0)$, and
$\VV{r}{2}=\Upsilon_{\bn,r}(0,0,1)$. The vertices $\VV{r}{0}$ and
$\VV{r}{1}$ are contained in $\DF(\bn)$; on the other hand, since
$\Phi^{-1}(\bn)\subset A_{\bn,r}$, we have $\VV{r}{2}\not\in\DF(\bn)$.

By~(\ref{eq:alpha-eqs-1}) and~(\ref{eq:alpha-eqs-2}), the triangles
$A_{\bn,r}$ evolve according to
\begin{eqnarray*}
\VV{r+1}{0} &=& \VV{r}{1},\\
\VV{r+1}{1} &=& \frac
{(n_{r+1}+1)\LL{r}{0}\VV{r}{0} + \LL{r}{2}\VV{r}{2}}
{(n_{r+1}+1)\LL{r}{0} + \LL{r}{2}}, \qquad\text{ and }\\
\VV{r+1}{2} &=& \frac
{n_{r+1}\LL{r}{0}\VV{r}{0} + \LL{r}{2}\VV{r}{2}}
{n_{r+1}\LL{r}{0} + \LL{r}{2}},
\end{eqnarray*}
where $\LL{r}{i} = |\Lambda_{\bn,r}(i)|$. Therefore both $\VV{r+1}{1}$
and $\VV{r+1}{2}$ lie on the edge of $A_{\bn,r}$ with endpoints
$\VV{r}{0}$ and~$\VV{r}{2}$, and cut this edge in the ratios
$(n_{r+1}+1)\LL{r}{0}\,:\,\LL{r}{2}$ and
$n_{r+1}\LL{r}{0}\,:\,\LL{r}{2}$ respectively. See
Figure~\ref{fig:evolve}, in which $A_{\bn,r}$ is shown with dashed
edges and $A_{\bn,r+1}$ with solid edges. 

\begin{figure}[htbp]
\begin{center}
\includegraphics[width=0.85\textwidth]{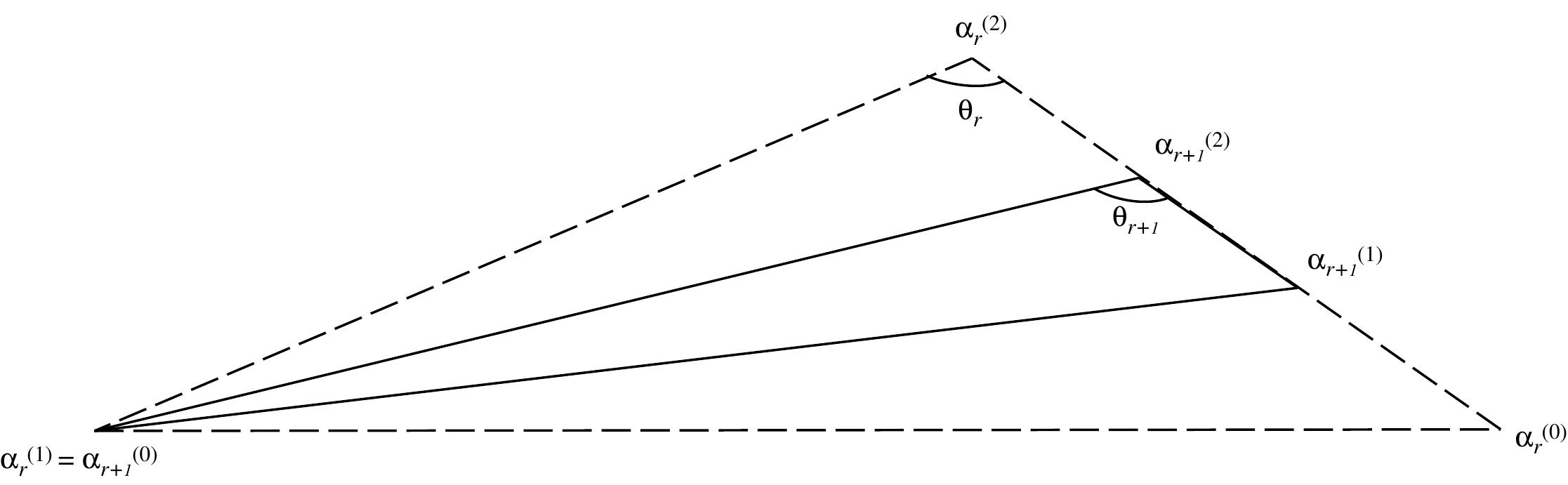}
\caption{Evolution of the triangles $A_{\bn,r}$}
\label{fig:evolve}
\end{center}
\end{figure}

As in the figure, let $\theta_r$ denote the angle of the triangle
$A_{\bn,r}$ at the vertex $\VV{r}{2}$. Then $\theta_{r+1}\ge \theta_r$
for all~$r$, with equality if and only if $n_{r+1}=0$ (i.e. if and
only if $\VV{r+1}{2} = \VV{r}{2}$). Generic
smoothness at non-rational extreme points is a consequence of the
following lemma.

\begin{lemma}
\label{lem:angle-to-pi}
Suppose that $\bn\in\cP_S$, and moreover that $n_0=n_1=1$. Then
$\theta_r\to\pi$ as $r\to\infty$.
\end{lemma}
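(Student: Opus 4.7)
The plan is to argue by contradiction. The preceding analysis has already established that $(\theta_r)$ is non-decreasing, so it converges to some $\theta_\infty \le \pi$; suppose $\theta_\infty = \pi - \eta$ with $\eta > 0$. The goal is to show that each block of five consecutive $1$'s in $\bn$ produces an angular gain bounded below by a constant $c(\eta) > 0$. Since $\bn \in \cP_S$ provides infinitely many such blocks, $\theta_r$ would exceed $\pi - \eta$ eventually, contradicting the assumed limit.

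To quantify the monotone increase, I would first make precise that, when $n_{r+1} = 1$, the angle $\theta_{r+1}$ equals $\angle \VV{r}{0}\VV{r+1}{2}\VV{r}{1}$ in the sub-triangle $\VV{r}{0}\VV{r+1}{2}\VV{r}{1}$ of $A_{\bn,r}$. Writing $\alpha, \beta, \theta$ for the angles of $A_{\bn,r}$ at $\VV{r}{0}, \VV{r}{1}, \VV{r}{2}$, one obtains $\theta_{r+1}-\theta_r = \beta - \beta'$, where $\beta' = \angle \VV{r}{0}\VV{r}{1}\VV{r+1}{2}$ is the angle subtended at $\VV{r}{1}$ by the sub-segment $\VV{r}{0}\VV{r+1}{2}$. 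This difference admits an explicit expression in terms of $\beta$, $\theta$, and the ratio $\LL{r}{0}/\LL{r}{2}$; in particular, whenever $\beta$ is bounded away from $0$, the one-step gain is bounded below by a positive quantity.

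To analyse a block $n_{r+1} = \cdots = n_{r+5} = 1$, I use the factorization $A_{\bn,r+5} = \Upsilon_{\bn,r}(T)$, where $T := K_1^{-5}(\Delta)$ is a fixed triangle whose three vertices are the normalized columns of $A_1^5$. Applying the monotonicity already derived to the case $\bn = \overline{1}$, direct calculation gives $\theta_0 = \pi/3$, $\theta_1 = \pi/2$, $\theta_2 = 5\pi/6$, and the sequence continues to approach $\pi$, governed by the eigenstructure of $A_1$: its characteristic polynomial is $x^3 - x^2 - 2x + 1$, with roots $2\cos(j\pi/7)$ for $j \in \{1,3,5\}$. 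The gap $|2\cos(5\pi/7)| \approx 1.247 > 0.445 \approx 2\cos(3\pi/7)$ between the two subdominant eigenvalues ensures that the triangles $K_1^{-r}(\Delta)$ become elongated along the $\lambda_3$-eigendirection, with the vertex corresponding to $\be_2$ sitting strictly between the images of $\be_0$ and $\be_1$ along that direction; hence $T$ itself has a large angle $\theta_T$ at this vertex, independent of $\bn$.

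The main obstacle is to pass from the large angle of $T$ to a large angle of $\Upsilon_{\bn,r}(T)$ at $\VV{r+5}{2}$, since projective maps can in principle distort angles arbitrarily. Here the hypothesis $n_0 = n_1 = 1$ enters: a direct computation gives $\VV{2}{2} = (1/3, 1/3, 1/3)$ (the barycenter of $\Delta$) and $\theta_2 = 5\pi/6$, initializing $A_{\bn,2}$ in a well-conditioned configuration. Combined with monotonicity, this should keep $\Upsilon_{\bn,r}$ (for all $r \ge 2$) within a compact family of projective maps with uniformly bounded angular distortion on $T$. Rigorous justification likely requires either a compactness argument on the space of triangle shapes (using the lower bound $\theta_r \ge 5\pi/6$), or linearization of $\Upsilon_{\bn,r}$ at the regular fixed point $\bal = \Phi^{-1}(\bn)$ (well-defined since $\cP_S \subset \Reg_S$), and will yield the required uniform lower bound $c(\eta)$ on $\theta_{r+5} - \theta_r$.
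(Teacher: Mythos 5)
Your overall skeleton matches the paper's: argue by contradiction from $\theta_r\to\pi-\eta$ with $\eta>0$, and extract a uniform angular gain from each block $1\,1\,1\,1\,1$, using $n_0=n_1=1$ only to force $\theta_r>\pi/2$ at the start. You even identify the correct one-step quantity $\theta_{r+1}-\theta_r=\psi_r-\gamma_r$ (your $\beta-\beta'$). But the crucial quantitative step --- a lower bound $c(\eta)>0$ on the gain per block --- is not actually established, and the two mechanisms you propose for it would not work. Writing $A_{\bn,r+5}=\Upsilon_{\bn,r}(T)$ with $T=K_1^{-5}(\Delta)$ a fixed triangle is correct, but the claim that $\Upsilon_{\bn,r}$ lies ``within a compact family of projective maps with uniformly bounded angular distortion'' is false: the triangles $A_{\bn,r}$ shrink to the point $\Phi^{-1}(\bn)$ and, under your own contradiction hypothesis, have an angle tending to $\pi-\eta$, so they become arbitrarily thin; projective maps of $\Delta$ onto such triangles have unbounded angular distortion (indeed your observed lower bound $\theta_r\ge 5\pi/6$ makes the triangles \emph{more} degenerate, not less, so it works against you). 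Nor is $\Phi^{-1}(\bn)$ a fixed point of $\Upsilon_{\bn,r}$ (these are compositions of varying maps, not iterates), so the linearization alternative is not available as stated. The eigenvalue analysis of $A_1$ describes the shape of $T$ but says nothing about how $\Upsilon_{\bn,r}$ distorts its angles.

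What the paper does instead, and what is missing from your write-up, is two concrete uniform estimates that bypass distortion of $\Upsilon_{\bn,r}$ entirely. First, a combinatorial one: the length recursion $\LL{r+s}{0}=\LL{r+s-1}{1}$, $\LL{r+s}{1}=\LL{r+s-1}{2}+2\LL{r+s-1}{0}$, $\LL{r+s}{2}=\LL{r+s-1}{2}+\LL{r+s-1}{0}$ shows that two or three steps into any block of $1$'s one has $\LL{r}{0}/\LL{r}{2}>1/3$, i.e.\ the new vertex $\VV{r+1}{2}$ sits a definite fraction of the way along the edge from $\VV{r}{2}$ to $\VV{r}{0}$, uniformly in the history. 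Second, an angular one: since $\varphi_r+\psi_r=\pi-\theta_r>2\epsilon$ and $\psi_{r+1}>\varphi_r$, within each block there is an index with $\psi_r>\epsilon$. The sine rule in the triangle $\VV{r}{0}\VV{r}{1}\VV{r+1}{2}$ then converts these two facts, together with $\theta_r>\pi/2$, into $\theta_{r+1}-\theta_r>\frac14\sin\epsilon$. You would need to supply estimates playing these roles (or equivalent ones) to close your argument; as written, the proof has a genuine gap at its central step.
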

\begin{proof}
Suppose for a contradiction that $\theta_r\to\theta=\pi-2\epsilon$ as
$r\to\infty$, where $\epsilon>0$. A direct calculation using
$n_0=n_1=1$ gives $\VV{1}{0}= (2/3,0,1/3)$, $\VV{1}{1}=(1/4,1/2,1/4)$,
and $\VV{1}{2} = (1/3,1/3,1/3)$, so that $\cos\theta_1<0$, and hence
$\theta>\pi/2$. 

Now pick any $r$ with $n_{r+s}=1$ for $0\le s\le 4$. Using
$\Lambda_1(0)=1$, $\Lambda_1(1)=2\,0\,0$ and $\Lambda_1(2)=2\,0$, we
have
\begin{eqnarray*}
\LL{r+s}{0} &=& \LL{r+s-1}{1},\\
\LL{r+s}{1} &=& \LL{r+s-1}{2} + 2\LL{r+s-1}{0}, \qquad\text{ and}\\
\LL{r+s}{2} &=& \LL{r+s-1}{2} + \LL{r+s-1}{0}
\end{eqnarray*}
for $0\le s\le 4$. Writing $(a,b,c) = (\LL{r-1}{0}, \LL{r-1}{1},
\LL{r-1}{2})$, this gives 
\begin{eqnarray*}
(\LL{r+2}{0}, \LL{r+2}{1}, \LL{r+2}{2}) &=& (a+2b+c, 5a+b+3c, 3a+b+2c)
  \qquad\text{ and}\\
(\LL{r+3}{0}, \LL{r+3}{1}, \LL{r+3}{2}) &=& (5a+b+3c, 5a+5b+4c,
  4a+3b+3c).
\end{eqnarray*}
Therefore both $n_{r+3}\LL{r+2}{0}/\LL{r+2}{2} =
\LL{r+2}{0}/\LL{r+2}{2}$ and $n_{r+4}\LL{r+3}{0}/\LL{r+3}{2} =
\LL{r+3}{0}/\LL{r+3}{2}$ are greater than~$1/3$. That is,
$\VV{r+3}{2}$ is at least $1/3$ of the way along the edge of
$A_{\bn,r+2}$ from $\VV{r+2}{2}$ to $\VV{r+2}{0}$; and similarly
$\VV{r+4}{2}$ is at least $1/3$ of the way along the edge of
$A_{\bn,r+3}$ from $\VV{r+3}{2}$ to $\VV{r+3}{0}$.

Let $\varphi_r$ and $\psi_r$ denote the angles of the triangle
$A_{\bn,r}$ at the vertices $\VV{r}{0}$ and $\VV{r}{1}$
respectively. Then $\varphi_r+\psi_r=\pi-\theta_r>2\epsilon$ for
all~$r$; moreover (see Figure~\ref{fig:evolve}),
$\psi_{r+1}>\varphi_r$ for all~$r$, so that if $\psi_r<\epsilon$
then $\psi_{r+1}>\epsilon$.

We can therefore pick an index~$r$ within each block of symbols
$1\,1\,1\,1\,1$ in $\bn$ with the property that both $\psi_r>\epsilon$
and $n_{r+1}\LL{r}{0}/\LL{r}{2}>1/3$. Figure~\ref{fig:evolve-detail}
shows the corresponding triangle $A_{\bn,r}$ and the relevant edges of
$A_{\bn,r+1}$. Let $u$, $v$, $w$, and $\gamma_r$ denote the lengths
and angle indicated in the figure. The condition
$n_{r+1}\LL{r}{0}/\LL{r}{2}>1/3$ gives that $(u+v)/v>4/3$.

\begin{figure}[htbp]
\begin{center}
\includegraphics[width=0.85\textwidth]{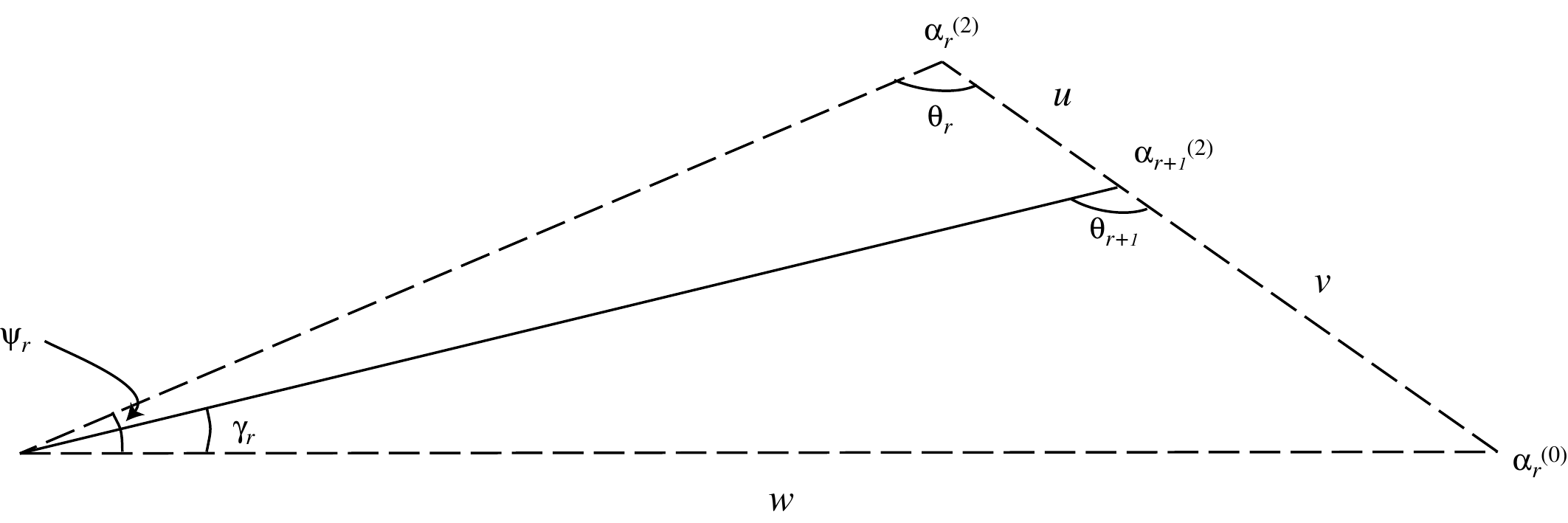}
\caption{Angles in the triangles $A_{\bn,r}$ and $A_{\bn,r+1}$}
\label{fig:evolve-detail}
\end{center}
\end{figure}

Applying the sine rule to the two triangles with base~$w$ gives
\[
\frac{\sin\psi_r}{\sin\gamma_r} =
\frac{u+v}{v}\,\frac{\sin\theta_r}{\sin\theta_{r+1}} > \frac43,
\]
since $\pi/2 < \theta_r < \theta_{r+1} <\pi$. Therefore
$\sin\psi_r-\sin\gamma_r > \frac{1}{4}\sin\psi_r >
\frac{1}{4}\sin\epsilon$. Since $0<\gamma_r<\psi_r<\pi/2$, this gives
\[
\theta_{r+1}-\theta_r = \psi_r - \gamma_r > \sin\psi_r-\sin\gamma_r > \frac{1}{4}\,\sin\epsilon.
\]
Therefore $\theta_r$ increases by at least $\frac{1}{4}\sin\epsilon$
as the index~$r$ passes through each block $1\,1\,1\,1\,1$ in $\bn$;
since there are infinitely many such blocks, this gives the required contradiction.
\end{proof}

\begin{theorem}
\label{thm:smooth}
Let $\bn\in\cP_S$. Then $\bal=\Phi^{-1}(\bn)$ is a smooth extreme
point of $\DF(\bn)$.
\end{theorem}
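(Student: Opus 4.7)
The plan is to reduce to the setting $n_0=n_1=1$ handled by Lemma~\ref{lem:angle-to-pi}, and then derive smoothness from the angle convergence $\theta_r\to\pi$ via an elementary tangent-cone argument on the triangles~$A_{\bn,r}$.

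Since $\bn\in\cP_S$ contains infinitely many blocks~$1^5$, I can choose $R\ge 0$ with $n_R=n_{R+1}=1$; set $\bm=\sigma^R(\bn)$, so that $\bm\in\cP_S$, $m_0=m_1=1$, and $K^R(\bal)=\Phi^{-1}(\bm)$. Because $\cP_S\subset\In_S$ (the $1^5$-blocks supply nonzero entries at both parities of index), Facts~\ref{facts:infifacts}i) ensures that the $K$-orbit of~$\bal$ avoids the faces of~$\Delta$. Iterating Lemma~\ref{lem:lfs-recursive} $R$ times, and using that~$\bal$ lies in the interior of each successive $\Delta_{n_j}$ (so the $T_{n_j}$-summands contribute nothing in a neighborhood of~$\bal$), I obtain $\DF(\bn)\cap U=\Upsilon_{\bn,R-1}(\DF(\bm))\cap U$ on a small neighborhood~$U$ of~$\bal$. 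Being a projective homeomorphism that sends lines to lines and is a local diffeomorphism at~$K^R(\bal)$, $\Upsilon_{\bn,R-1}$ transfers smoothness between the two setups; so it suffices to prove the theorem under the extra assumption $n_0=n_1=1$, which by Lemma~\ref{lem:angle-to-pi} yields $\theta_r\to\pi$.

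By Lemma~\ref{lem:bounds}, the segment $\cF_{\bn,r}=[\VV{r}{0},\VV{r}{1}]$ lies in $L_{\bn,r}\subset\DF(\bn)$, so by convexity the subtriangle $\mathcal{D}_r:=\mathrm{conv}(\bal,\VV{r}{0},\VV{r}{1})$ is contained in~$\DF(\bn)$. Writing $a_r$ for the angle of $\mathcal{D}_r$ at~$\bal$, and $\varphi_r,\psi_r$ for the angles of $A_{\bn,r}$ at $\VV{r}{0}$ and $\VV{r}{1}$ (so $\varphi_r+\psi_r+\theta_r=\pi$), the three rays from~$\bal$ to the vertices of $A_{\bn,r}$ split $A_{\bn,r}$ into three subtriangles, one being $\mathcal{D}_r$. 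The angles of $\mathcal{D}_r$ at $\VV{r}{0}$ and $\VV{r}{1}$ are each a subsector of the corresponding full vertex angle (since the ray to the interior point~$\bal$ lies inside that sector), so they sum to at most $\varphi_r+\psi_r=\pi-\theta_r$; hence $a_r\ge\theta_r$, and thus $a_r\to\pi$.

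Finally, since $\mathcal{D}_r\subset\DF(\bn)$, any supporting line of~$\DF(\bn)$ at~$\bal$ must have outward normal making angle at least~$\pi/2$ with every direction in the angular wedge of $\mathcal{D}_r$ at~$\bal$ (of width~$a_r$); hence the set of admissible outward normals is an arc of angular width $\pi-a_r$, which tends to~$0$. So the outward normal---and hence the supporting line at~$\bal$---is unique, which is precisely the smoothness condition. The main obstacle is the triangle-angle bookkeeping $a_r\ge\theta_r$ of the previous paragraph, which is the step that converts the quantitative conclusion $\theta_r\to\pi$ of Lemma~\ref{lem:angle-to-pi} into a statement about the directions seen from~$\bal$ itself.
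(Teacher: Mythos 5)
Your proposal is correct and follows essentially the same route as the paper: reduce to the case $n_0=n_1=1$ via Lemma~\ref{lem:lfs-recursive} and a projective homeomorphism, then convert the conclusion $\theta_r\to\pi$ of Lemma~\ref{lem:angle-to-pi} into uniqueness of the supporting line at~$\bal$ by elementary planar geometry. The only (harmless) difference is in that last step: you argue directly, inscribing in $\DF(\bn)$ the triangles with vertices $\bal$, $\VV{r}{0}$, $\VV{r}{1}$ and noting their angle at~$\bal$ is at least~$\theta_r$, whereas the paper argues by contradiction, observing that two distinct supporting lines would confine $\DF(\bn)$ to a sector of angle $\Theta<\pi$ at~$\bal$ and force $\theta_r\le\Theta$ for all~$r$.
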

\begin{proof}
We can suppose without loss of generality that $n_0=n_1=1$, so that
the hypotheses of Lemma~\ref{lem:angle-to-pi} are satisfied. For if
not, let $\bm = \sigma^r(\bn)$ for some~$r$ with
$n_r=n_{r+1}=1$. Then, by Lemma~\ref{lem:lfs-recursive}, $\DF(\bn)$ is
the union of a polygon with the image of $\DF(\bm)$ under a projective
homeomorphism, and hence $\bal$ is a smooth extreme point of
$\DF(\bn)$ if and only if $\Phi^{-1}(\bm)$ is a smooth extreme point
of $\DF(\bm)$.

Suppose for a contradiction that $\bal$ is not a smooth extreme point,
so that there are distinct lines $L_1$ and $L_2$ through~$\bal$ which
do not disconnect $\DF(\bn)$. Let $\Theta<\pi$ be the angle between
$L_1$ and $L_2$ in the sector which contains $\DF(\bn)$. Then for
all~$r$, we have that $\VV{r}{0}$ and $\VV{r}{1}$ are contained in
this sector, while $\VV{r}{2}$ is contained in the opposite sector (in
order that $A_{\bn,r}$ contains $\bal$). It follows that
$\theta_r<\Theta$ for all~$r$, contradicting Lemma~\ref{lem:angle-to-pi}.
\end{proof}

\begin{remark}
Since $\cP_S\subset\Reg_S$, Theorem~\ref{thm:smooth}
says nothing about smoothness of extreme points in the exceptional
case. In fact, a similar but simpler argument can be used to show that
the endpoints of an exceptional interval are always smooth extreme
points, provided that the itinerary~$\bn$ has only finitely many
zeroes. 
\end{remark}

\subsection{Subsequential digit frequencies of $d_\beta(1)$}
Let $\beta>1$ be such that the digit frequency set~$\DF(\beta)$ is not
a polytope. Then $\DF(\beta) = \DF(\bn)$, where $\bn =
\bn(d_\beta(1))$, by Lemma~\ref{lem:expansion-translate}. Since
$\DF(\beta)$ is not a polytope, $\bn$ is not of rational or finite
type, so that $S(\bn)=d_\beta(1)$ by Definition~\ref{defn:n_w} and
Lemma~\ref{lem:gaps}.

In the regular case, when $\DF(\beta)$ has exactly one non-rational
extreme point $\bal$, the sequence $S(\bn)$ has digit
frequency $\bal$ by Theorem~\ref{thm:extreme-points} and
Facts~\ref{facts:infifacts}f). That is, $d_\beta(1)$ has well-defined
digit frequency $\delta_\beta(1) = \bal$.

In the exceptional case the digit frequency of $d_\beta(1) =
S(\bn)$ is not well defined, again by
Facts~\ref{facts:infifacts}f). In this case the interesting object is
the set of subsequential digit frequencies of $d_\beta(1)$. Let
$\bal^{(\beta,s)}$ be the rational element of $\Delta$ giving the
digit frequency of the initial subword $\Word{d_\beta(1)}{s}$ of
$d_\beta(1)$, and define~$F_\beta$ to be the set of limits of
convergent subsequences of $\left(\bal^{(\beta,s)}\right)_{s\ge 1}$.

$F_\beta$ is necessarily contained in the exceptional
simplex~$\Phi^{-1}(\bn)$. To see this, observe that for each $r\ge 0$, the vertices of the
simplex~$A_{\bn,r}$ are the digit frequencies of the words
$\Lambda_{\bn,r}(i)$ for $0\le i\le k-1$, and $S(\bn)$ is a
concatenation of these words. Therefore any subword of $S(\bn)$ which
is a concatenation of the $\Lambda_{\bn,r}(i)$ has digit frequency
contained in~$A_{\bn,r}$, and hence an arbitrary initial subword of
length~$s$ has digit frequency within distance $L/s$ of $A_{\bn,r}$,
where~$L$ is the maximum of the lengths of the
words~$\Lambda_{\bn,r}(i)$. It follows that
$F_\beta \subset A_{\bn,r}$ for all $r$, and so
$F_\beta\subset\Phi^{-1}(\bn) = \bigcap_{r\ge 0} A_{\bn,r}$.

\medskip

A natural and seemingly difficult question is whether or not it is
always the case that $F_\beta = \Phi^{-1}(\bn)$. The proof of
Theorem~\ref{thm:exceptional-digits} below, which treats the
case~$k=3$, depends strongly on the fact that the exceptional simplex
is one-dimensional, and so does not generalise to higher values
of~$k$. 

We will need a preliminary result, that bounded
itineraries are regular when~$k=3$. Since whether itineraries are
regular or exceptional is connected with their rate of growth, this
result appears obvious at first sight; but care has to be taken when
there are many zeroes in the itinerary.

\begin{lemma}
\label{lem:bounded-regular}
Let $k=3$ and $\bn\in\N^\N$. Then $\bn$ is regular in each of the
following two cases:
\begin{enumerate}[a)]
\item there are only finitely many~$r$ for which both $n_r>0$ and
  $n_{r+1}>0$;
\item $\bn$ is bounded.
\end{enumerate}
\end{lemma}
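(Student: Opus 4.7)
The plan in both cases is to show that the nested triangles $A_{\bn,r}\subset\Delta$ shrink to a single point. For $k=3$, Borovikov's theorem (as invoked in Section~\ref{sec:converge}) guarantees that $\Phi^{-1}(\bn)=\bigcap_r A_{\bn,r}$ is a simplex of dimension at most $k-2=1$, so regularity is equivalent to $\mathrm{diam}(A_{\bn,r})\to 0$. Since $K_{n_0}^{-1}\colon\Delta'\to\Delta_{n_0}$ is a projective homeomorphism identifying $\Phi^{-1}(\sigma(\bn))$ with $\Phi^{-1}(\bn)$, regularity is invariant under the shift $\bn\mapsto\sigma(\bn)$, and we may discard any finite prefix of $\bn$ in either argument.

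For part~a), after a shift I would assume $n_r n_{r+1}=0$ for every $r\ge 0$, so that every nonzero entry is followed by a zero. Using the recursions (\ref{eq:alpha-eqs-1}) and~(\ref{eq:alpha-eqs-2}), at an ``active'' step ($n_{r+1}\ge 1$) both new vertices $\bal_{r+1}^{(1)}$ and $\bal_{r+1}^{(2)}$ lie on the segment $[\bal_r^{(0)},\bal_r^{(2)}]$ with explicit separation
\[
\bigl|\bal_{r+1}^{(1)}-\bal_{r+1}^{(2)}\bigr| = \frac{\LL{r}{0}\,\LL{r}{2}}{\bigl((n_{r+1}+1)\LL{r}{0}+\LL{r}{2}\bigr)\bigl(n_{r+1}\LL{r}{0}+\LL{r}{2}\bigr)}\bigl|\bal_r^{(0)}-\bal_r^{(2)}\bigr|,
\]
while at a ``passive'' step ($n_{r+1}=0$) the vertex $\bal_r^{(2)}$ is preserved and $\bal_{r+1}^{(1)}$ is a convex combination of $\bal_r^{(0)}$ and $\bal_r^{(2)}$. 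Combining these operations across each block of the form $0^t\,n\,0^{s}$ with $n\ge 1$ and using that $\bn$ must have infinitely many nonzero entries (else it is of rational type and so regular by Facts~\ref{facts:infifacts}f)), I would show that no two of the three vertices of $A_{\bn,r}$ can maintain a positive separation, whence $\mathrm{diam}(A_{\bn,r})\to 0$.

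For part~b), assume $\bn$ is bounded by some $N$ and not of rational type; if the hypothesis of part~a) holds we are done, so assume there are infinitely many $r$ with both $n_r,n_{r+1}\ge 1$. I would apply Birkhoff's theorem on contraction in Hilbert's projective metric to the abelianization matrices
\[
M_n=\begin{pmatrix} 0 & n+1 & n \\ 1 & 0 & 0 \\ 0 & 1 & 1 \end{pmatrix}
\]
of the substitutions $\Lambda_n$, whose product $M_{n_0}\cdots M_{n_r}$ has columns proportional to the vertex position vectors of~$A_{\bn,r}$. Direct expansion shows that for $n,n',m\ge 1$ and $s\ge 0$ the block $M_n M_{n'} M_0^{s} M_m$ is strictly positive, with entries bounded in terms of~$N$, so such blocks have Birkhoff contraction coefficient $\tau<1$ uniformly. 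Since $M_0$ is nonexpanding in the Hilbert metric and infinitely many positive blocks occur, submultiplicativity of $\tau$ forces $M_{n_0}\cdots M_{n_r}$ to become asymptotically rank one, and hence the three columns, and so the vertices of $A_{\bn,r}$, converge to a common point.

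The main obstacle in both parts is that $M_0$ has eigenvalues $\pm 1$ and is merely nonexpanding rather than strictly contracting in Hilbert's projective metric; no argument succeeds one step at a time. The contraction must be extracted across blocks that combine $M_0$'s with nonzero $M_n$'s: in part~a) each active step together with its surrounding zeros, and in part~b) each adjacent nonzero pair absorbing the preceding $M_0$'s into a single positive block. Verifying that such contracting blocks occur infinitely often with uniformly bounded rate under the respective hypotheses is the delicate point in each case.
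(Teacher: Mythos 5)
Part~a) contains the genuine gap. Your argument reduces to the assertion that ``no two of the three vertices of $A_{\bn,r}$ can maintain a positive separation,'' but that assertion \emph{is} the lemma, and the one-step identities you record do not yield it: the displayed formula controls only the single side $[\VV{r+1}{1},\VV{r+1}{2}]$ relative to $[\VV{r}{0},\VV{r}{2}]$, the passive steps fix $\VV{r}{2}$ and merely reshuffle, and nothing you write gives a contraction of the diameter that is uniformly bounded away from $1$ across a block $0^t\,n\,0^s$. The paper supplies exactly this missing uniformity by a different device: it forms the matrix products $A_p A_0^{2r} A_q A_0^{2s}$ and $A_p A_0^{2r} A_q A_0^{2s-1}$ explicitly, compares them entrywise with a rank-one matrix to get the Birkhoff cross-ratio bound $d\le 25$ independently of $p,q,r,s$, and so obtains a uniform Hilbert-metric contraction on infinitely many disjoint blocks. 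Note also the parity issue you do not address: these products are strictly positive only when the first zero-run in the block has even length, so the case in which all but finitely many zero-runs have odd length needs a separate argument. The paper handles it via Facts~\ref{facts:infifacts}i): the itinerary condition fails, $\Phi^{-1}(\bn)$ is carried into a rational face of $\Delta$, and since it contains no rational points it must be a single point. Without a uniform contraction estimate and without this case, part~a) is not proved.

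Part~b) is essentially the paper's argument (Birkhoff contraction for strictly positive products of the abelianizations, the remaining factors being non-expanding in the Hilbert metric), but your chosen block $M_n M_{n'} M_0^{s} M_m$ does not have ``entries bounded in terms of $N$'': $s$ is unbounded and the entries of $M_0^{s}$ grow linearly in $s$, so the inference to a uniform contraction coefficient as stated is invalid. The quantity that must be bounded is the cross-ratio $d(A)$, not the entries; the paper sidesteps the problem by taking the block $A_{n_r}A_{n_{r+1}}A_{n_{r+2}}$, which is strictly positive whenever $n_r,n_{r+1}>0$ and, by boundedness of $\bn$, ranges over a finite set of matrices. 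This slip in part~b) is easily repaired; the gap in part~a) is not.
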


\begin{proof}
Given a non-negative $3\times 3$ matrix $A$, let
$f_A$ denote its projective action on~$\Delta$:
in other words, $f_A(\bal) = A\bal / ||A\bal||_1$.  For each $n\ge 0$, let
\[
A_n = 
\left(
\begin{array}{ccc}
0 & n+1 & n\\
1 & 0 & 0\\
0 & 1 & 1
\end{array}
\right)
\]
be the abelianization of the substitution~$\Lambda_n$, so that
$f_{A_n} = K_n^{-1}\colon\Delta\to\Delta$.

By a theorem of Birkhoff~\cite{Birkhoff}, if $A$ is strictly positive
then $f_A$ contracts the Hilbert metric~$\delta$ on the
interior~$\mathring{\Delta}$ of~$\Delta$ by a factor
$(\sqrt{d(A)}-1)/(\sqrt{d(A)}+1)$, where
\[
d(A) = \max_{1\le i,j,l,m\le 3} \frac{a_{il}a_{jm}}{a_{im}a_{jl}}
\]
is the largest number that can be obtained by choosing four elements
of~$A$ arranged in a rectangle, and dividing the product of the two
elements on one diagonal by the product of the two elements on the
other.

Moreover (Lemma~30 of~\cite{lex}) the matrices~$A_n$, while not
  strictly positive, have the property that they do not expand the
  Hilbert metric: $\delta(f_{A_n}(\bal), f_{A_n}(\bbeta)) \le
  \delta(\bal, \bbeta)$ for all $\bal,\bbeta\in\mathring{\Delta}$.

Recall that 
\[
\Phi^{-1}(\bn) = \bigcap_{r\ge 0} K_{n_0}^{-1}\circ
K_{n_1}^{-1}\circ\cdots\circ K_{n_r}^{-1}(\Delta) = \bigcap_{r\ge 0} f_{A_{n_0}}\circ
f_{A_{n_1}}\circ\cdots\circ f_{A_{n_r}}(\Delta).
\]
In order to prove that an itinerary~$\bn$ is regular, it therefore
suffices to find a constant~$C$ and infinitely many disjoint subwords
$n_r\ldots n_{r+s}$ of~$\bn$, each having the property that the product
$A_{n_r\ldots n_{r+s}} = A_{n_r}\,A_{n_{r+1}}\,\cdots\,A_{n_{r+s}}$ is
strictly positive and satisfies $d(A_{n_r\ldots n_{r+s}})\le C$.
\begin{enumerate}[a)]
\item Suppose that there are only finitely many~$r$ for which both
  $n_r>0$ and $n_{r+1}>0$. Since rational itineraries are
  regular, we can assume that $\bn$ has infinitely many non-zero
  entries, so that it has a tail of the form
  $p_1\,0^{k_1}\,p_2\,0^{k_2}\,\ldots$, where the $p_i$ and the $k_i$
  are strictly positive. Moreover, we can assume that infinitely many
  of the integers~$k_i$ are even, since otherwise $\Phi^{-1}(\bn)$ is
  contained in the faces of~$\Delta$ by Facts~\ref{facts:infifacts}i),
  and hence~$\bn$ is regular.

There are therefore infinitely many disjoint subwords of~$\bn$ which
either have the form $p\,0^{2r}\,q\,0^{2s}$ or have the form
$p\,0^{2r}\,q\,0^{2s-1}$, where $p$, $q$, $r$, and $s$ are positive
integers. Now a straightforward induction gives that
\[
A_0^{2r} = \left(
\begin{array}{ccc}
1 & 0 & 0 \\ 0 & 1 & 0 \\ r & r & 1
\end{array}
\right)
\qquad\text{ and }\qquad
A_0^{2r-1} = \left(
\begin{array}{ccc}
0 & 1 & 0 \\ 1 & 0 & 0 \\ r-1 & r & 1
\end{array}
\right).
\]
Therefore
\[
A_p\,A_0^{2r}\,A_q\,A_0^{2s} = \left(
\begin{array}{ccc}
1+p+pr+ps+pqrs&p+pr+ps+pqr+pqrs&p+pqr
\\ \noalign{\medskip}qs&1+q+qs&q\\ \noalign{\medskip}1+r+s+qrs&1+r+s+qr+qrs&1+qr
\end{array}
\right)
,
\]
a strictly positive matrix, each of whose entries is bounded below by,
but no more than five times than, the corresponding entry in the
matrix
\[
B = \left(
\begin{array}{ccc}
pqrs & pqrs & pqr \\ qs & qs & q \\qrs & qrs & qr
\end{array}
\right).
\]
Since $d(B)=1$ for all $p$, $q$, $r$, and $s$, it follows that
$d(A_pA_0^{2r}A_qA_0^{2s})$ is bounded above by 25. By a similar
calculation the same is true of $A_pA_0^{2r}A_qA_0^{2s-1}$
for all positive $p$, $q$, $r$, and $s$, which establishes the result.

\medskip

\item Let~$\bn$ be bounded. By a direct calculation, if $n_r>0$ and
  $n_{r+1}>0$, then the matrix $A_{n_r}A_{n_{r+1}}A_{n_{r+2}}$ is
  strictly positive. Since~$\bn$ is bounded, there are only finitely
  many possible values for this matrix, and hence there is a
  constant~$C$ such that $d(A_{n_r}A_{n_{r+1}}A_{n_{r+2}})\le C$ whenever
  $n_r$ and $n_{r+1}$ are both positive. This establishes that~$\bn$
  is regular when there are infinitely many such values of~$r$; and if
  there are only finitely many, then regularity follows from~a).
\end{enumerate}

\end{proof}

\begin{theorem}
\label{thm:exceptional-digits}
Let~$\beta\in(2,3)$ be such that $\bn = \bn(d_\beta(1))$ is
exceptional. Then $F_\beta = \Phi^{-1}(\bn)$.
\end{theorem}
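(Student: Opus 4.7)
My plan is to prove $F_\beta = \Phi^{-1}(\bn)$ by first showing $F_\beta$ is a closed, connected subset of the one-dimensional simplex $\Phi^{-1}(\bn)$---hence a (possibly degenerate) subsegment---and then showing that both vertices $\bv_1,\bv_2$ of $\Phi^{-1}(\bn)$ belong to $F_\beta$, which forces $F_\beta=\Phi^{-1}(\bn)$ by connectedness.

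Closedness is immediate from the definition of $F_\beta$ as a set of subsequential limits. For connectedness I will use the standard fact that in a compact metric space the set of accumulation points of any sequence $(x_s)$ satisfying $d(x_s,x_{s+1})\to 0$ is connected, combined with the estimate $d(\bal^{(\beta,s)},\bal^{(\beta,s+1)})\le\sqrt{k}/(s+1)$, which holds because consecutive prefixes of $S(\bn)$ differ in only a single character. Together with the inclusion $F_\beta\subseteq\Phi^{-1}(\bn)$ noted in the paragraph preceding the theorem, this makes $F_\beta$ a closed subsegment of $\Phi^{-1}(\bn)$.

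The main tool for the second stage is the observation that the length-$\LL{r+1}{2}$ prefix of $S(\bn)$ is exactly $\Lambda_{\bn,r+1}(2)$, with digit frequency~$\VV{r+1}{2}$; thus every accumulation point of $(\VV{r+1}{2})_{r\ge 0}$ lies in $F_\beta$. Extracting from the contradiction step in the proof of Lemma~\ref{lem:face-converge}, specialized to $k=3$ (so $\cF_{\bn,r}$ has only the two vertices $\VV{r}{0},\VV{r}{1}$): whenever an endpoint $\bv_m\in\{\bv_1,\bv_2\}$ is \emph{missed} by $\cF_{\bn,r}$---meaning neither $\VV{r}{0}$ nor $\VV{r}{1}$ lies within $\epsilon$ of $\bv_m$---and $n_{r+1}>0$, then $d(\VV{r+1}{2},\bv_m)<\epsilon/5$. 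It therefore suffices to show, for each endpoint $\bv_m$, that $\bv_m$ is missed at infinitely many indices $r$ with $n_{r+1}>0$.

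This last reduction is the main obstacle. Suppose for contradiction that $\bv_1\notin F_\beta$, so that for all large $r$ with $n_{r+1}>0$ some vertex of $\cF_{\bn,r}$ is $\epsilon$-close to $\bv_1$. Lemma~\ref{lem:face-converge} provides infinitely many ``good'' indices~$r$ at which $\{\VV{r}{0},\VV{r}{1}\}$ is $\epsilon$-close to $\{\bv_1,\bv_2\}$; label each by its orientation $\omega_r\in\{1,2\}$ with $\VV{r}{0}\approx\bv_{\omega_r}$. At a good~$r$ with $\omega_r=1$, the identity $\VV{r+1}{0}=\VV{r}{1}\approx\bv_2$ forces $\bv_1$ to be approximated at $r+1$ by $\VV{r+1}{1}$; the projective formulas~(\ref{eq:alpha-eqs-1}) and~(\ref{eq:alpha-eqs-2}) then require $\beta_r:=(n_{r+1}+1)\LL{r}{0}/\LL{r}{2}$ to be large, whence $\alpha_r:=n_{r+1}\LL{r}{0}/\LL{r}{2}\ge\beta_r/2$ (since $n_{r+1}\ge 1$) is also large, placing $\VV{r+1}{2}$ close to $\VV{r}{0}\approx\bv_1$ and contradicting $\bv_1\notin F_\beta$. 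Hence $\omega_r=2$ at every large good~$r$. Exceptionality (Lemma~\ref{lem:bounded-regular}(a)--(b)) now provides indices at which both $n_{r+1}$ and $n_{r+2}$ are positive and $n_{r+1}$ is arbitrarily large; at such a good~$r$ the same projective formulas propagate goodness from $r$ to $r+1$ with orientation swapped to $\omega_{r+1}=1$, contradicting the conclusion just derived. The symmetric argument yields $\bv_2\in F_\beta$, completing the proof. The key technical subtlety is securing the joint existence of indices that are simultaneously good and have $n_{r+1}$ large, which is where one needs the Perron--Frobenius analysis of the matrix products $A_{n_0}\cdots A_{n_r}$ underlying Lemma~\ref{lem:bounded-regular}.
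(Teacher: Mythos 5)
Your skeleton --- reduce to showing that the two endpoints $\bv_1,\bv_2$ of the exceptional interval $\ell=\Phi^{-1}(\bn)$ lie in $F_\beta$, reach them through the prefix frequencies $\VV{r}{2}$ of the words $\Lambda_{\bn,r}(2)$, and exploit the evolution formulas~(\ref{eq:alpha-eqs-1})--(\ref{eq:alpha-eqs-2}) together with the bound $(n_{r+1}+1)/n_{r+1}\le 2$ --- coincides with the paper's. The gap is in the combinatorial input driving your final contradiction. You need indices $r$ that are simultaneously ``good'' in the sense of Lemma~\ref{lem:face-converge}, have $n_{r+1}$ arbitrarily large, and have $n_{r+2}>0$, and you justify their existence by citing Lemma~\ref{lem:bounded-regular}(a)--(b). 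But that lemma yields two \emph{separate} infinite sets of indices --- infinitely many $r$ with $n_r>0$ and $n_{r+1}>0$, and unboundedness of $(n_r)$ --- and nothing forces them to intersect each other, let alone the good indices: a priori every large entry of an exceptional $\bn$ could be immediately followed by a $0$ while all consecutive-positive pairs involve small entries, and since Lemma~\ref{lem:bounded-regular} gives only \emph{sufficient} conditions for regularity, exceptionality does not exclude such itineraries. You flag this yourself as the ``key technical subtlety,'' but asserting that one ``needs the Perron--Frobenius analysis'' is not a proof, so the argument does not close. (A secondary, patchable omission: in the orientation step, if $\VV{r}{2}$ itself lies near $\bv_1$ then $(n_{r+1}+1)\LL{r}{0}/\LL{r}{2}$ need not be large for $\VV{r+1}{1}$ to approximate $\bv_1$; that case happens to be favourable, since then $\VV{r+1}{2}$ is also near $\bv_1$, but it must be treated.)

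The paper's proof shows that no such simultaneity is required. Choosing any $r$ with $d_H(A_{\bn,r},\ell)<\epsilon/8$ and $n_{r+1}\ge 4L/\epsilon$ --- available from unboundedness alone, i.e.\ Lemma~\ref{lem:bounded-regular}(b) --- already forces $d(\VV{r+1}{1},\VV{r+1}{2})<\epsilon/2$; since the three vertices of $A_{\bn,r+1}$ must between them approximate both endpoints of $\ell$, this \emph{manufactures} the good configuration at $r+1$ ($\VV{r+1}{0}$ near one endpoint, $\VV{r+1}{1}$ and $\VV{r+1}{2}$ both near the other), placing one endpoint in $F_\beta$ at once. The other endpoint is then reached by a two-step induction: the configuration is invariant under $s\mapsto s+2$ while $n_{s+1}=0$, and the first time $n_{s+1}>0$ the vertex $\VV{s+1}{2}$ lands near the remaining endpoint; some such step must occur because otherwise Facts~\ref{facts:infifacts}i) would put $\ell$ inside a face of $\Delta$, forcing it to contain rational (hence regular) points. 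To salvage your route you would need to replace the appeal to Lemma~\ref{lem:bounded-regular} by an argument of exactly this kind.
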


\begin{proof}
Since $\beta\in(2,3)$ we have $k=3$. Let $\ell=\Phi^{-1}(\bn)$ be the
exceptional interval, and~$L$ denote the length of $\ell$.

The distance between $\bal^{(\beta,s)}$ and $\bal^{(\beta,s+1)}$ is at
most $1/s$, and so it is enough to prove that the two endpoints
$\bv_1$ and $\bv_2$ of $\ell$ lie in $F_\beta$. We shall show that for
all $\epsilon>0$ there are natural numbers $r_1$ and~$r_2$ such that
$\bal_{r_i}^{(2)}\in B_\epsilon(\bv_i)$ for each~$i$, which
establishes the result since $\bal_r^{(2)}$ is the digit frequency of
the initial subword $\Lambda_{\bn,r}(2)$ of $S(\bn)=d_\beta(1)$.

Suppose without loss of generality that $\epsilon < L/4$, and let~$R$
be large enough that $d_H(A_{\bn,r},\ell)<\epsilon/8$ for all $r\ge
R$. In particular,
\begin{equation}
\label{eq:alpha-close}
\text{For all }r\ge R\text{ and for each }a\in\{1,2\}, \text{ there exists }j\in\{0,1,2\}\text{
  with }\bal_r^{(j)}\in B_{\epsilon/8}(\bv_a).
\end{equation}

Pick $r\ge R$ with $n_{r+1}\ge 4L/\epsilon$, which is possible since
the exceptional itinerary~$\bn$ is unbounded by
Lemma~\ref{lem:bounded-regular}. Using~(\ref{eq:alpha-eqs-1})
and~(\ref{eq:alpha-eqs-2}), we have that
\begin{eqnarray*}
\VV{r+1}{1} &=& \frac{(n_{r+1}+1)\LL{r}{0}\VV{r}{0}+\LL{r}{2}\VV{r}{2}}
{(n_{r+1}+1)\LL{r}{0}+\LL{r}{2}} \qquad\text{and}\\
\VV{r+1}{2} &=& \frac{n_{r+1}\LL{r}{0}\VV{r}{0}+\LL{r}{2}\VV{r}{2}}
{n_{r+1}\LL{r}{0}+\LL{r}{2}}
\end{eqnarray*}
lie on the line segment joining $\VV{r}{2}$ to $\VV{r}{0}$. Since
$d_H(A_{\bn,r},\ell)<\epsilon/8$, this segment has length $\Lambda <
L+\epsilon/4 < 2L$. The distances of $\VV{r+1}{1}$ and $\VV{r+1}{2}$
from $\VV{r}{0}$ are therefore given by
$\LL{r}{2}\Lambda/((n_{r+1}+1)\LL{r}{0} + \LL{r}{2})$ and 
$\LL{r}{2}\Lambda/(n_{r+1}\LL{r}{0} + \LL{r}{2})$. Subtracting these
gives $d(\VV{r+1}{1},\VV{r+1}{2})< \Lambda/n_{r+1} < 2L/(4L/\epsilon)
= \epsilon/2$. It follows from~(\ref{eq:alpha-close}), using
$L>4\epsilon$, that $\VV{r+1}{0}$ is within $\epsilon/8$ of one of the
endpoints of $\ell$, say $\bv_1$; while both $\VV{r+1}{1}$ and
$\VV{r+1}{2}$ lie in $B_\epsilon(\bv_2)$.

Since $\VV{r+1}{2}\in B_\epsilon(\bv_2)$, it remains to find~$r'$ with
$\VV{r'}{2}\in B_\epsilon(\bv_1)$. We shall show that, if
$\VV{s}{0}\in B_{\epsilon/8}(\bv_1)$ and $\VV{s}{1}, \VV{s}{2}\in
B_{\epsilon}(\bv_2)$ for some~$s$, then
\begin{enumerate}[a)]
\item if $n_{s+1}=0$ then
  the same conditions hold when $s$ is replaced with $s+2$; and
\item if $n_{s+1}>0$ then $\VV{s+1}{2}\in B_\epsilon(\bv_1)$.
\end{enumerate}
This will complete the proof, since there is some $p\ge1$ for which
$n_{r+2p}>0$; for otherwise, by Facts~\ref{facts:infifacts}i), the
exceptional interval~$\ell$ would be contained in one of the
(one-dimensional) faces of~$\Delta$, contradicting the fact that it
contains no rational points.

\medskip

For a), observe that $\VV{s+1}{0} = \VV{s}{1}\in B_\epsilon(\bv_2)$ and, since $n_{s+1}=0$,
we have $\VV{s+1}{2}=\VV{s}{2}\in
B_\epsilon(\bv_2)$. By~(\ref{eq:alpha-close}), we have $\VV{s+1}{1}\in
B_{\epsilon/8}(\bv_1)$.

Then $\VV{s+2}{0}=\VV{s+1}{1}\in B_{\epsilon/8}(\bv_1)$, and both
$\VV{s+2}{1}$ and $\VV{s+2}{2}$ lie on the line segment joining
$\VV{s+1}{2}$ to $\VV{s+1}{0}$, which is contained in
$B_\epsilon(\bv_2)$, as required.

\medskip

For b), we have as in~a) that $\VV{s+1}{0}=\VV{s}{1}\in
B_\epsilon(\bv_2)$. Both $\VV{s+1}{1}$ and $\VV{s+1}{2}$ lie on the
line segment joining $\VV{s}{2}\in B_\epsilon(\bv_2)$ to $\VV{s}{0}\in
B_{\epsilon/8}(\bv_1)$, and $\VV{s+1}{1}$ is closer than $\VV{s+1}{2}$
to $\VV{s}{0}$. Therefore $\VV{s+1}{1}\in B_{\epsilon/8}(\bv_1)$
by~(\ref{eq:alpha-close}). Calculating the ratio of the distances from
$\VV{s+1}{1}$ and $\VV{s+1}{2}$ to $\VV{s}{0}$ gives
\[
\frac{d(\VV{s}{0}, \VV{s+1}{2})}{d(\VV{s}{0}, \VV{s+1}{1})} = 
\frac{(n_{s+1}+1)\LL{s}{0}+\LL{s}{2}}{n_{s+1}\LL{s}{0}+\LL{s}{2}} 
< \frac{n_{s+1}+1}{n_{s+1}} \le 2
\]
since $n_{s+1}\ge 1$. Therefore $d(\VV{s}{0},
\VV{s+1}{2})<\epsilon/2$, so that
$d(\VV{s+1}{2},\bv_1)<\epsilon/2+\epsilon/8 < \epsilon$ as required.

\end{proof}

\subsection{Calculations for specific values of~$\beta$}

We finish by addressing the practical problem of computing the
function $\beta\mapsto\bn(w_\beta)$, so that digit frequency sets of
specific numbers~$\beta$ can be determined.

We first define a left inverse~$\Gamma_n$ of each
$\Lambda_n\colon\Sigma\to\Sigma$. Informally, to
determine~$\Gamma_n(w)$ we repeatedly remove $\Lambda_n$-images of digits from the
front of~$w$ until we are no longer able to do so: at that stage we
complete $\Gamma_n(w)$ with $\overline{0}$ if the remaining block is
smaller than anything in the image of~$\Lambda_n$, and with
$\overline{k-1}$ if the remaining block is larger than anything in the
image of~$\Lambda_n$.

\begin{defn}[$\Gamma_n\colon\Sigma\to\Sigma$]
For each $n\in\N$, define $\Gamma_n\colon\Sigma\to\Sigma$ as
follows. Let~$w\in\Sigma$. Then
\begin{itemize}
\item If $w_0 = 0$ then $\Gamma_n(w) = \overline{0}$.
\item If $1\le w_0 \le k-2$ then $\Gamma_n(w) = (w_0-1)\,
  \Gamma_n(\sigma(w))$. 
\item If $w = (k-1)\,0^{n+1}\,v$ then $\Gamma_n(w) =
  (k-2)\,\Gamma_n(v)$.
\item If $w = (k-1)\, 0^n\,v$ with $v_0>0$ then $\Gamma_n(w) =
  (k-1)\,\Gamma_n(v)$.
\item If $w_0 = k-1$ and there is some $1\le r\le n$ with $w_r \not=0$ then
  $\Gamma_n(w) = \overline{k-1}$.
\end{itemize}
\end{defn}

Notice that $\Gamma_n$ is increasing (the five cases in its
definition are listed in order of increasing~$w$, and the five
corresponding outputs are also in increasing order) and continuous (if
$w$ and $w'$ agree to $r(n+2)$ digits for any $r$, then $\Gamma_n(w)$
and $\Gamma_n(w')$ agree to $r$ digits).

The following lemma gives a recursive algorithm for
calculating~$\bn(w)$ in the rational or finite case, or for reading
off successive entries of $\bn(w)$ in the general case.
\begin{lemma}
\label{lem:calc-bn-w}
Let~$w\in\Sigma$ with $w_0 = k-1$.  

If $w = (k-1)\,\overline{0}$ then $\bn(w) = \infty$.  Otherwise, let
$n\ge 0$ be such that $w = (k-1)\, 0^n\, v$ where $v_0\not=0$. Then
\begin{itemize}
\item If $v$ starts $1^m\,0$ for some~$m>0$, then $\bn(w) =
  (n+1)\,\overline{0}$.
\item If $v = \overline{1}$ then $\bn(w) = n\,\infty$.
\item Otherwise $n_0(w) = n$, and $\sigma(\bn(w)) =
  \bn(\Gamma_n(w))$. 
\end{itemize}
\end{lemma}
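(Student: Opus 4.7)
The plan is to handle the cases of the statement separately, reducing the general recursive case to a single auxiliary equivalence between $\Lambda_n$ and $\Gamma_n$. The two simplest cases I will dispose of directly from the fact that $S\colon\cN\to\cJ$ is an order-preserving bijection (Lemma~\ref{lem:S-op-homeo}): when $w=(k-1)\overline{0}=S(\infty)$, minimality of $\infty$ in $\cN$ gives $\bn(w)=\infty$; when $v=\overline{1}$, a direct computation gives $S(n\infty)=\Lambda_n((k-1)\overline{0})=(k-1)0^n\overline{1}=w$, so $\bn(w)=n\infty$. For the case $v=1^m 0\ldots$ with $m\ge 1$, I will first verify $S((n+1)\overline{0})=\overline{(k-1)0^{n+1}}<w$, which gives $\bn(w)\ge(n+1)\overline{0}$; for the matching upper bound, every $\bm>(n+1)\overline{0}$ in $\cN$ has first entry $m_0\le n$, and I will show $S(\bm)>w$ by analysing the first two substitution blocks of $S(\bm)=\Lambda_{m_0}(S(\sigma(\bm)))$. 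For $m_0<n$ this is immediate since $\Lambda_{m_0}$ produces a nonzero digit at position $m_0+1$ where $w$ has $0$; for $m_0=n$ the second block of $S(\bm)$ is $1^{m_1}$ followed by a digit $\ge 2$ (or by $\overline{1}$ when $\bm=n\infty$), and the comparison with the tail $1^m 0\ldots$ of $w$ in the subcases $m_1>m$, $m_1=m$, $m_1<m$ yields $S(\bm)>w$ in each.

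For the general recursive case I will first verify $n_0(\bn(w))=n$: the bound $\ge n$ is the first-block argument above, while $\le n$ follows from $S(n\infty)=(k-1)0^n\overline{1}<w$, valid because the hypotheses force $v>\overline{1}$. Since the map $\bm'\mapsto n\bm'$ is order-preserving on $\cN$,
\[
\sigma(\bn(w)) \,=\, \max\{\bm'\in\cN \,:\, \Lambda_n(S(\bm'))\le w\},
\]
so the identity $\sigma(\bn(w))=\bn(\Gamma_n(w))$ will follow from the auxiliary equivalence $\Lambda_n(S(\bm'))\le w\iff S(\bm')\le\Gamma_n(w)$ for all relevant $\bm'\in\cN$.

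I plan to derive this from the more general claim that, for every $v\in\Sigma$ and every $x\in\Sigma$ such that no tail $\sigma^r(x)$ equals $\overline{0}$,
\[
\Lambda_n(x)\le v \,\iff\, x\le\Gamma_n(v),
\]
proved by an induction matching the five branches in the definition of $\Gamma_n$ against the three possible forms of $\Lambda_n(x_0)$; in each subcase the lexicographic comparison either resolves at the first disagreement, matching the corresponding comparison against $\Gamma_n(v)$, or passes to shifted suffixes via the inductive hypothesis. To apply this with $x=S(\bm')$ I will use that for $\bm'\ne\infty$ the sequence $S(\bm')$ contains infinitely many copies of $k-1$ (a short induction on the substitution structure of $S$, using that $\Lambda_n(k-1)$ and $\Lambda_n(k-2)$ both contain $k-1$), so no tail of $S(\bm')$ equals $\overline{0}$; for $\bm'=\infty$ the hypotheses of the general case yield $\Gamma_n(v)>\overline{0}$, and both sides of the equivalence are easily seen to be true. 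The main obstacle is the bookkeeping in the auxiliary equivalence's case analysis: the no-tail-is-$\overline{0}$ hypothesis is required precisely when $\Gamma_n(v)=\overline{0}$ despite $v\ne\overline{0}$, which happens exactly when $v$ begins $1^m 0$ for some $m\ge 1$ or $v=\overline{1}$ --- that is, exactly in the situations singled out by the special cases of the lemma. In the $v=1^m 0$ case the naive recursion's output $n\infty$ must be replaced by its rational partner $(n+1)\overline{0}$, which is why the special branches appear in the statement.
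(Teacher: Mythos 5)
Your proposal is correct and follows essentially the same route as the paper: dispose of the degenerate cases by computing $S$ on the relevant small elements of $\cN$, and establish the recursion by exploiting that $\Gamma_n$ is an increasing left inverse of $\Lambda_n$. The only real difference is that you package the key step as a general adjunction $\Lambda_n(x)\le v \iff x\le\Gamma_n(v)$ for $x$ with no tail equal to $\overline{0}$, whereas the paper proves just the instance $x=S(\bm)$ by combining monotonicity of $\Gamma_n$ with a direct analysis of the equality case $\Gamma_n(S(n\,\bm))=\Gamma_n(w)$.
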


\begin{proof} %Checked 30-7-13
Recall that $\bn(w) = \max\,\{\bm\in\cN\,:\,S(\bm)\le w\}$. Since
$S(\infty) = (k-1)\,\overline{0}$ and $S(N\,\infty) =
(k-1)\,0^N\,\overline{1}$ for each~$N\in\N$, it is immediate that
$\bn(w) = \infty$ if and only if \mbox{$w = (k-1)\,\overline{0}$}.

Now suppose that $w = (k-1)\,0^n\,v$ where $v_0\not=0$. Observe that,
in increasing order,
\begin{eqnarray*}
S((n+1)\,\overline{0}) &=& \overline{(k-1)\,0^{n+1}},\\
S(n\,\infty) &=& (k-1)\,0^n\,\overline{1},\\
S(n\,\overline{0}) &=& \overline{(k-1)\,0^n}, \text{ \ and}\\
S((n-1)\,\infty) &=& (k-1)\,0^{n-1}\,\overline{1}.
\end{eqnarray*}
Moreover $(n+1)\,\overline{0}$ and $n\,\infty$
are consecutive elements of $\cN$, as are $n\,\overline{0}$ and
$(n-1)\,\infty$; and~$n\,\infty$ is the limit of a strictly
decreasing sequence in~$\cN$. It follows that $\bn(w)
= (n+1)\,\overline{0}$ if and only if $ v < \overline{1}$ -- that is, if and
only if $v$ starts $1^m\,0$ for some~$m$; that otherwise $n_0(w) = n$;
and that $\bn(w) = n\,\infty$ if and only if $v=\overline{1}$.

Suppose then that $v>\overline{1}$. Using $n_0(w) = n$, we have
\begin{eqnarray*}
\sigma(\bn(w)) &=& \max\,\{\bm\in\cN\,:\,S(n\,\bm) \le w\}\\
&=& \max\,\{\bm\in\cN\,:\,\Gamma_n(S(n\,\bm)) \le \Gamma_n(w)\}.
\end{eqnarray*}
The second equality uses that~$\Gamma_n$ is increasing. First, if
$S(n\,\bm)\le w$ then $\Gamma_n(S(n\,\bm)) \le \Gamma_n(w)$. Second,
if $\Gamma_n(S(n\,\bm))\le \Gamma_n(w)$ then either $S(n\,\bm) \le w$
or $\Gamma_n(S(n\,\bm)) = \Gamma_n(w)$. In the latter case, there are
two possibilities:
\begin{enumerate}[a)]
\item $w = \Lambda_n(v)$ is in the image of $\Lambda_n$. Then
  $\Gamma_n(S(n\,\bm)) = \Gamma_n(w)$ reads
  $\Gamma_n(\Lambda_n(S(\bm))) = \Gamma_n(\Lambda_n(v))$, so $S(\bm) =
  v$, and $S(n\,\bm) = \Lambda_n(v) = w$.
\item $w$ is not in the image of~$\Lambda_n$, so that $\Gamma_n(w)$
  ends either with $\overline{0}$ or with $\overline{k-1}$ by
  definition of $\Gamma_n$. Since $\Gamma_n(S(n\,\bm)) = S(\bm)$, the
  only possibility is that $\Gamma_n(w) = \overline{k-1}$ and $\bm =
  \overline{0}$. Then $S(n\,\bm) = \overline{(k-1)\,0^n}$, while
  $\Gamma_n(w) = \overline{k-1}$ means by definition of~$\Gamma_n$
  that either $w = \overline{(k-1)\,0^n}$ or $w =
  ((k-1)\,0^n)^s\,(k-1)\,0^r\,v$ for some $s\ge0$, $r<n$, and
  $v\in\Sigma$ with $v_0>0$. So $S(n\,\bm)\le w$ in this case
  also.
\end{enumerate}

Therefore
\[\sigma(\bn(w)) = \max\,\{\bm\in\cN\,:\,\Gamma_n(S(n\,\bm)) \le
\Gamma_n(w)\}  = \bn(\Gamma_n(w))\]
using $\Gamma_n(S(n\,\bm)) = S(\bm)$, as required.
\end{proof}

\begin{example}
\label{ex:calc-bn-w}
Let~$\beta = 2.1901$ so that $k=3$. Calculating the orbit entries
$f_\beta^r(1)$ for $0\le r\le 12$, we find (and could establish
rigorously) that $w_\beta$ starts with the digits
$2\,0\,0\,1\,2\,0\,0\,1\,2\,0\,0\,0\,0\,\ldots$.  Applying the
algorithm of Lemma~\ref{lem:calc-bn-w}, we have
\begin{itemize}
\item $w_\beta = 2\,0^n\,v$ where $n=2$ and $v$ does not start
  $1^m\,0$. Therefore $n_0(w_\beta) = 2$, and $\sigma(\bn(w_\beta)) =
  \bn(\Gamma_2(w_\beta))$.
\item $\Gamma_2(w_\beta) = 2\,0\,2\,0\,1\,\overline{0} = 2\,0^n\,v$
  where~$n=1$ and $v$ does not start~$1^m\,0$. Therefore $n_1(w_\beta)
  = 1$, and $\sigma^2(\bn(w_\beta)) = \bn(\Gamma_1\Gamma_2(w_\beta))$.
\item $\Gamma_1\Gamma_2(w_\beta) = 2\,2\,\overline{0} = 2\,0^n\,v$
  where~$n=0$ and $v$ does not start~$1^m\,0$. Therefore $n_2(w_\beta)
  = 0$, and $\sigma^3(\bn(w_\beta)) =
  \bn(\Gamma_0\Gamma_1\Gamma_2(w_\beta))$.
\item $\Gamma_0\Gamma_1\Gamma_2(w_\beta) = 2\,1\,\overline{0} = 2\,0^n\,v$
  where~$n=0$ and $v = 1\,\overline{0}$. Therefore\\
  $\bn(\Gamma_0\Gamma_1\Gamma_2(w_\beta)) = 1\,\overline{0}$.
\end{itemize}

\medskip

In summary, $\bn(w_\beta) = 2\,1\,0\,1\,\overline{0}$, from which we can
compute $\DF(\beta)$ as in Example~\ref{ex:ep}a).

\end{example}

\begin{example}
\label{ex:markov-again}
Consider the example with $d_\beta(1) = 2\,1\,2\,1\,\overline{0}$
which was treated in Section~\ref{sec:intro} using Markov partition
techniques. The algorithm of Lemma~\ref{lem:calc-bn-w} gives
$\bn(d_\beta(1))=0\,1\,1\,\overline{0}$, so that, by
Theorem~\ref{thm:extreme-points}, the non-trivial extreme points of
$\DF(\beta)$ are
\begin{eqnarray*}
 K_0^{-1}(0,1,0) &=& (1/2,\,0,\,1/2),\\
K_0^{-1}K_1^{-1}(0,1,0) &=& (0,\,2/3,\,1/3), \text{ and}\\
K_0^{-1}K_1^{-1}K_1^{-1}(0,0,1) &=& (1/4,\,1/4,\,1/2),
\end{eqnarray*}
in agreement with the Markov partition calculation.

\end{example}

\bibliographystyle{amsplain}
\bibliography{betarefs}

\end{document}